\definecolor{clemson-orange}{RGB}{234,106,32}
\definecolor{chicago-maroon}{RGB}{128,0,0}
\definecolor{cincinnati-red}{RGB}{190,0,0}
\newcommand{\old}[1]{{}}
\newcommand{\bb}{\mathbb}
\newcommand{\spn}{\text{span}}
\newcommand{\R}{\bb R}
\newcommand{\N}{\bb N}
\DeclareMathOperator\cone{cone}
\def\Item$#1${\item $\displaystyle#1$
   \hfill\refstepcounter{equation}(\theequation)}
\theoremstyle{definition}
\newtheorem{prop}{Proposition}[section]
\newtheorem{theorem}[prop]{Theorem}
\newtheorem{lemma}[prop]{Lemma}
\newtheorem{definition}[prop]{Definition}
\newtheorem{remark}[prop]{Remark}
\newtheorem{example}[prop]{Example}
\newtheoremstyle{TheoremNum}
        {\topsep}{\topsep}              %%% space between body and thm
        {\itshape}                      %%% Thm body font
        {}                              %%% Indent amount (empty = no indent)
        {\bfseries}                     %%% Thm head font
        {.}                             %%% Punctuation after thm head
        { }                             %%% Space after thm head
        {\thmname{#1}\thmnote{ \bfseries #3}}%%% Thm head spec
    \theoremstyle{TheoremNum}
    \newtheorem{theorem-pre}{Theorem}
\numberwithin{equation}{section}
\def\st{\mid}
\title{Strong duality and sensitivity analysis in semi-infinite linear programming}
\author{Amitabh Basu%
\thanks{\texttt{basu.amitabh@jhu.edu}}}
\affil{Department of Applied Mathematics and Statistics, Johns Hopkins University}
\author{Kipp Martin% 
\thanks{\texttt{kmartin@chicagobooth.edu}} }
\author{Christopher Thomas Ryan%
\thanks{\texttt{chris.ryan@chicagobooth.edu}}}
\affil{Booth School of Business, University of Chicago}
\begin{document}

\maketitle

\noindent \textbf{Abstract:} Finite-dimensional linear programs satisfy strong duality (SD) and have the ``dual pricing" (DP) property.  The (DP) property ensures that, given a sufficiently small perturbation of the right-hand-side vector, there exists a dual solution that correctly ``prices" the perturbation by computing the exact change in the optimal objective function value. These properties may fail in semi-infinite linear programming where the constraint vector space is infinite dimensional. Unlike the finite-dimensional case, in semi-infinite linear programs the constraint vector space is a modeling choice.  We show that, for a sufficiently restricted vector space, both (SD) and (DP) always hold, at the cost of restricting the perturbations to that space. The main goal of the paper is to extend this restricted space to the largest possible constraint space where (SD) and (DP) hold. Once (SD) or (DP) fail for a given constraint space, then these conditions fail for all larger constraint spaces. We give sufficient conditions for when (SD) and (DP) hold in an extended constraint space.  Our results require the use of linear functionals that are singular or purely finitely additive and thus not representable as finite support vectors. The key to understanding these linear functionals is the extension of the Fourier-Motzkin elimination procedure to semi-infinite linear programs.

\smallskip

\noindent 
\textbf{Keywords.} semi-infinite linear programming, duality, sensitivity analysis

%%%%%%%%%%%%%%%%%%%%
%%%%%%%%%%%%%%%%%%%%
% introduction.tex
%%%%%%%%%%%%%%%%%%%%
%%%%%%%%%%%%%%%%%%%%

\section{Introduction}\label{s:introduction}

In this paper we examine how two standard properties of finite-dimensional linear 
programming, strong duality and sensitivity analysis, carry over to semi-infinite linear programs (SILPs). Our standard 
form for a semi-infinite linear program is
\begin{align*}\label{eq:SILP}
OV(b) := \inf \quad & \sum_{k=1}^n c_k x_k  \tag{SILP}\\
   {\rm s.t.} \quad & \sum_{k=1}^n a^k(i) x_k \ge b(i)  
   \quad\text{ for } i \in I
\end{align*}
where $a^k : I \to \R$ for all $k = 1, \dots, n$ and $b : I \to \R$ are 
real-valued functions on the (potentially infinite cardinality) index set 
$I$. The ``columns'' $a^k$ define a linear map $A : \R^n \to Y$ with 
$A(x) = (\sum_{k=1}^n a^k(i) x_k : i \in I)$ where $Y$ is a linear 
subspace of $\R^I$, the space of all real-valued functions on the index 
set $I$. The vector space $Y$ is called the \emph{constraint space} 
 of \eqref{eq:SILP}. This terminology follows  Chapter 2 of Anderson and Nash \cite{anderson-nash}. 
Goberna and L\'{o}pez \cite{goberna2014post} call $Y$ the ``space of parameters.'' 
Finite linear programming problem is a special case of  \eqref{eq:SILP} where 
 $I = \left\{1,\dots,m\right\}$ and $Y = \R^m$ for a finite natural 
number $m$.

As shown in Chapter~4 of Anderson and Nash \cite{anderson-nash}, 
the dual of \eqref{eq:SILP} with constraint space $Y$ is
\begin{align*}\label{eq:DSILPprime}
\begin{array}{rl}
\sup & \psi(b)  \\
  {\rm s.t.} & \psi(a^k) = c_k\quad \text{ for } k=1,\dots,n \\
& \psi \succeq_{Y'_+} 0
\end{array}\tag{\text{DSILP($Y$)}}
\end{align*}
where $\psi: Y \to \R$ is  a linear functional in the algebraic dual space $Y^{\prime}$ of $Y$ and $\succeq_{Y'_+}$ 
denotes an ordering of linear functionals induced by the cone 
\begin{align*}
Y'_+ := \left\{\psi : Y \to \R \st \psi(y) \ge 0 \text{ for all } 
y \in Y \cap \R^I_+\right\}
\end{align*}
where $\R^I_+$ is the set of all nonnegative 
real-valued functions with domain $I$.  The familiar finite-dimensional
linear programming 
dual has solutions $\psi =(\psi_1, \dots, \psi_m)$  where 
$\psi(y) = \sum_{i=1}^m y_i\psi_i$ for all nonnegative $y \in \R^m.$ Equivalently, $\psi\in \R^m_+$. Note the 
standard abuse of notation of letting $\psi$ denote both a linear 
functional and the real vector that represents it.

Our primary focus is on two desirable properties for the primal-dual 
pair \eqref{eq:SILP}--\eqref{eq:DSILPprime} when both the primal and 
dual are feasible (and hence the primal has bounded objective value).  
The first property is {\it strong duality} (SD).  
The primal-dual pair \eqref{eq:SILP}--\eqref{eq:DSILPprime} satisfies the 
\emph{strong duality} (SD) property if
\begin{itemize}
\item[] \textbf{(SD)}:  there exists a $\psi^{*} \in Y_{+}^{\prime}$ such that
\begin{align}\label{eq:strong-duality}
 \psi^*(a^k) = c_k \text{ for } k =1, 2,\dots n \text{ and } \psi^*(b) = OV(b)
\end{align}
\end{itemize}
where $OV(b)$ is the optimal value of the primal \eqref{eq:SILP} with 
right-hand-side $b.$ 

The second property of interest concerns use of dual solutions  
in sensitivity analysis. 
The primal-dual pair \eqref{eq:SILP}--\eqref{eq:DSILPprime} 
satisfies the \emph{dual pricing} (DP) property if
\begin{itemize}
\item[] \textbf{(DP)}: For every perturbation vector $d \in Y$  
such that \eqref{eq:SILP} is feasible for right-hand-side $b + d,$ there exists an optimal dual solution $\psi^{*}$ to 
\eqref{eq:DSILPprime} and an $\hat \epsilon > 0$ such that
\begin{align}\label{eq:dual-pricing}
OV(b + \epsilon d) = \psi^{*}(b + \epsilon d) = OV(b) + \epsilon \psi^{*}(d)
\end{align}
for all $\epsilon \in [0, \hat \epsilon].$
\end{itemize}

The terminology ``dual pricing'' refers to the fact that the 
appropriately chosen optimal dual solution $\psi^*$ correctly 
``prices'' the impact of changes in the right-hand on the optimal 
primal objective value. 

Finite-dimensional linear programs always satisfy (SD) and (DP) 
when the primal is feasible and bounded.  Define the vector space 
\begin{eqnarray}
U := \spn(a^1, \dots, a^n, b). \label{eq:define-U}
\end{eqnarray}
This is the minimum constraint space of interest  since the dual 
problem~\eqref{eq:DSILPprime}  requires the linear functionals 
defined  on $Y$ to operate on $a^1, \dots, a^n, b.$ 
If $I$ is a finite set and \eqref{eq:SILP} is feasible and 
bounded, then there exists a $\psi^{*} \in U^{\prime}_{+}$ 
such that~\eqref{eq:strong-duality}  and~\eqref{eq:dual-pricing} 
is satisfied.
Furthermore, optimal dual solutions $\psi^*$ that satisfy (SD) 
and (DP) are vectors in $\R^m.$ That is, we can take 
$\psi^{*} = (\psi^{*}_{1}, \ldots, \psi^{*}_{m}).$  Thus  $\psi^*$ 
is not only a  linear functional over $U,$  but it is also a linear functional over $\R^{m}.$  
The fact  that $\psi^*$ is a linear functional for  both $Y = U$ and 
$Y = \R^{m}$ is  obvious in the finite case and taken for granted.

The situation in semi-infinite linear programs is far more complicated and interesting. In general, a primal-dual pair \eqref{eq:SILP}--\eqref{eq:DSILPprime} can fail both (SD) and (DP).  Properties (SD) and (DP) depend crucially on the choice of constraint space $Y$ and its associated dual space. Unlike  finite linear programs where there is only one natural choice for the constraint space (namely $\R^m$), there are multiple viable nonisomorphic choices for an SILP. This makes constraint space choice a core modeling issue in semi-infinite linear programming. However, one of our main results is that (SD) and (DP) always hold with constraint space $U$. Under this choice, $\text{DSILP}(U)$ has a unique optimal dual solution $\psi^*$ we call the \emph{base dual solution} of \eqref{eq:SILP} -- see Theorem~\ref{theorem:silps-never-have-a-duality-gap}.  Throughout the paper, the linear functionals that are feasible 
to~\eqref{eq:DSILPprime}  are called dual solutions.

The base dual solution satisfies \eqref{eq:dual-pricing} for every choice of $d \in U$. However, this space greatly restricts the choice of perturbation vectors $d$. Expanding  $U$ to a larger space $Y$ (note that $Y$ must contain $U$ for \eqref{eq:DSILPprime} to be a valid dual) can compromise (SD) and (DP). 
We give concrete examples where (SD), (DP) (or both)  hold and do not hold.  

The  main tool used to extend $U$ to larger constraints spaces is the Fourier-Motzkin elimination procedure for semi-infinite linear programs introduced in Basu et al.~\cite{basu2013projection}.   We define  a  linear operator called the \emph{Fourier-Motzkin operator} that is used to map the constraint space   $U$ onto another constraint space. A linear functional is then defined on this new constraint space.  Under certain conditions, this linear functional is then extended using the Hahn-Banach theorem to a larger vector space that contains the new constraint space.  Then, using the adjoint of the Fourier-Motzkin operator, we get a linear functional on constraint spaces larger than $U$ where properties (SD) and (DP) hold.   
Although the Fourier-Motzkin elimination procedure described in  Basu et al. \cite{basu2013projection} was used  to study the finite support (or Haar) dual of an \eqref{eq:SILP},  this procedure provides insight into more general duals. The more general duals require the use of purely finitely additive linear functionals (often called {\em singular}) and these are known to be difficult to work with (see Ponstein, \cite{ponstein1981use}). However, the Fourier-Motzkin operator allows us to work with such functionals. 

\paragraph{Our Results.} 
Section~\ref{s:preliminaries} contains preliminary results on  constraint spaces and their duals. In Section~\ref{s:fm-elimination-duality} we recall some key results about the Fourier-Motzkin elimination procedure from Basu et. al.~\cite{basu2013projection} and also state and prove several additional lemmas that elucidate further insights into non-finite-support duals. Here we define the Fourier-Motzkin operator, which plays a key role in our theory.
In Section~\ref{s:strong-duality-dual-pricing} we prove (SD) and (DP) for the constraint space $Y = U.$  This is done in Theorems~\ref{theorem:silps-never-have-a-duality-gap} and~\ref{theorem:U-dual-pricing}, respectively. 

In Section~\ref{s:extending-U} we  prove (SD) and (DP) for subspaces $Y \subseteq \R^{I}$ that extend $U.$  In  Proposition~\ref{prop:mononticity} we show that once  (SD) or (DP) fail for a constraint space $Y$, then they fail for all larger constraint spaces.  Therefore, we want to extend the base dual solution and  push out from $U$ as far as possible until we encounter a constraint space for which (SD) or (DP) fail.  Sufficient conditions on the original data are provided that guarantee (SD) and (DP) hold in larger constraint spaces. See Theorems~\ref{theorem:extend-SD-Y}~and~\ref{theorem:sufficient-conditions-dual-pricing-alt}.

\paragraph{Comparison with prior work.} Our work can be contrasted with existing work on strong duality and sensitivity analysis in semi-infinite linear programs along several directions. First, the majority of work in semi-infinite linear programming  assumes either  the Haar dual or settings where $b$ and $a^k$ for all $k$ are continuous functions over a compact index set (see for instance Anderson and Nash \cite{anderson-nash}, Glashoff and Gustavson \cite{glashoff1983linear}, Hettich and Kortanek \cite{hettich1993semi}, and Shapiro \cite{shapiro2009semi}). The classical theory, initiated by Haar \cite{haar1924}, gave sufficient conditions for zero duality gap between the primal and the Haar dual. A sequence of papers by Charnes et al. \cite{charnes1963duality,charnes1965representations} and Duffin and Karlovitz \cite{duffin-karlovitz65}) fixed errors in Haar's original strong duality proof and described how a semi-infinite linear program with a duality gap could be reformulated to have zero duality gap with the Haar dual.  Glashoff in \cite{glashoff79}  also worked with a dual similar to the Haar dual.  The Haar dual was also used  during later development in the 1980s (in a series of papers by Karney \cite{karney81,karney1982pathological,karney85}) and remains the predominant setting for analysis in more recent work by Goberna and co-authors (see for instance, \cite{goberna2007sensitivity}, \cite{goberna1998linear} and \cite{goberna2014post}). 
By contrast, our work considers a wider spectrum of constraint spaces from $U$ to $\R^I$ and their associated algebraic duals. All such algebraic duals include the Haar dual (when restricted to the given constraint space), but also additional linear functionals. In particular, our theory handles settings where  the index set is not compact, such as $\N$. 

We do more than simply  extend the  Haar dual. Our work has a different focus and raises and answers questions not previously studied in the existing literature. We explore how \emph{changing} the constraint space (and hence the dual) effects duality and sensitivity analysis. This emphasis forces us to consider optimal dual solutions that are not finite support. Indeed, we provide examples where the finite support dual fails to satisfy (SD) but another choice of dual does satisfy (SD). In this direction, we extend our  earlier work in \cite{basu2014sufficiency} on the sufficiency of finite support duals to study semi-infinite linear programming through our use of the Fourier-Motzkin elimination technology. 

Second, our treatment of sensitivity analysis through exploration of the (DP) condition represents a different standard than the existing literature on that topic, which recently culminated in the monograph by Goberna and L\'{o}pez \cite{goberna2014post}. In (DP) we allow  a different dual solution in each perturbation direction $d$. The standard in Goberna and L\'{o}pez \cite{goberna2007sensitivity} and Goberna et al. \cite{goberna2010sensitivity} is that a single dual solution is valid for all feasible perturbations. This more exacting standard translates into strict sufficient conditions, including the existence of a primal optimal solution. By focusing on the weaker (DP), we are able to drop the requirement of primal solvability. Indeed, Example~\ref{ex:drop-primal-solvability} shows that (DP) holds even though a primal optimal solutions does not exist. Moreover, the sufficient conditions for sensitivity analysis in Goberna and L\'{o}pez \cite{goberna2007sensitivity} and Goberna et al. \cite{goberna2010sensitivity} rule out the possibility of dual solutions that are \emph{not} finite support yet nonetheless satisfy their standard of sensitivity analysis. Example~\ref{ex:drop-primal-solvability} provides one such case, where we show that there is a single optimal dual solution that satisfies \eqref{eq:dual-pricing} for all feasible perturbations $d$ and yet is not finite support.  

Third, the analytical approach to sensitivity analysis  in Goberna and L\'{o}pez \cite{goberna2014post} is grounded in convex-analytic methods that focus on topological properties of cones and epigraphs, whereas our approach uses Fourier-Motzkin elimination, an algebraic tool that   appeared in the study of semi-infinite linear programming duality in Basu et al. \cite{basu2013projection}. Earlier work by Goberna et al.~\cite{Goberna2010209} explored extensions of Fourier-Motzkin elimination to semi-infinite linear systems but did not explore its implications for duality.

%%%%%%%%%%%%%%%%%%%%
%%%%%%%%%%%%%%%%%%%%
% preliminaries.tex
%%%%%%%%%%%%%%%%%%%%
%%%%%%%%%%%%%%%%%%%%

\section{Preliminaries} \label{s:preliminaries}

In this section we review the notation,  terminology and properties of relevant constraint spaces and their algebraic duals used throughout the paper.

First some basic notation and terminology. The \emph{algebraic dual} $Y'$ of the vector space $Y$ is the set of real-valued linear functionals with domain $Y$. Let $\psi \in Y'.$ The evaluation of $\psi$ at $y$ is alternately denoted by $ \langle y, \psi \rangle$ or $\psi(y)$, depending on the context. 
 
A convex pointed cone $P$ in $Y$ defines a vector space ordering $\succeq_P$ of $Y$, with $y \succeq_P y'$ if  $y - y' \in P$. The \emph{algebraic dual cone} of $P$ is $P' = \left\{\psi \in Y' : \psi(y) \ge 0 \text{ for all } y \in P\right\}$.
Elements of $P'$ are called \emph{positive linear functionals} on $Y$ (see for instance, page 17 of Holmes \cite{holmes}). Let $A:X \rightarrow Y$ be a linear mapping from vector space $X$ to vector space  $Y$. The \emph{algebraic adjoint} $A' : Y' \to X'$ is a linear operator defined by $A'(\psi) = \psi \circ A$ where $\psi \in Y'$. 

We discuss some possibilities for the constraint space $Y$ in \eqref{eq:DSILPprime}.  A well-studied case is $Y = \R^I$. Here, the structure of \eqref{eq:DSILPprime} is complex since very little is known about the algebraic dual of $\R^I$ for general $I$. Researchers typically study an alternate dual called the \emph{finite support dual}. We denote the finite support dual of \eqref{eq:SILP} by
\begin{align*}\label{eq:FDSILP}
\begin{array}{rl}
\sup & \sum_{i = 1}^m \psi(i)b(i)  \\
  {\rm s.t.} & \sum_{i = 1}^m a^k(i)\psi(i) = c_k \quad \text{ for } k=1,\dots,n \\
& \psi \in \R^{(I)}_+
\end{array}\tag{\text{FDSILP}}
\end{align*}
where $\R^{(I)}$ consists of those functions in $\psi \in \R^I$ with $\psi(i) \neq 0$ for only finitely many $i \in I$ and $\R^{(I)}_+$ consists of those elements $\psi \in \R^{(I)}$ where $\psi(i) \ge 0$ for all $i \in I$.  A finite support  element of    $\R^{I}$   always represents a linear functional on any vector space  $Y \subseteq \R^{I}.$ Therefore the  finite support dual linear functionals feasible to \eqref{eq:FDSILP} are feasible to \eqref{eq:DSILPprime} for any constraint space $Y \subseteq \R^{I}$    that contains the space $U = \spn(a^1, \dots, a^n, b).$   This implies that  the optimal value of  $\eqref{eq:FDSILP}$ is always less than or equal to the optimal value of $\eqref{eq:DSILPprime}$ for all valid constraint spaces $Y$. It was shown in Basu et al. \cite{basu2014sufficiency} that \eqref{eq:FDSILP} and \eqref{eq:DSILPprime} for $Y = \R^\N$ are equivalent. In this case \eqref{eq:FDSILP} is indeed the algebraic dual of \eqref{eq:SILP} and so \eqref{eq:FDSILP} and $\text{DSILP}(\R^\N)$ are equivalent. This is not the necessarily the case for $Y = \R^I$ with $I \neq \N$.

Alternate choices for $Y$ include various subspaces of $\R^I$.  When $I = \N$
we pay particular attention to the spaces $\ell_{p}$ for    $1 \le p < \infty.$  The space  $\ell_{p}$
consist of all elements $y  \in  \R^N$   where $||y||_p = (\sum_{i \in I} |y(i)|^p)^{1/p} < \infty.$  
When $p = \infty$ we allow   $I$ to be uncountable and define   $\ell_{\infty}(I)$  to be the subspace of all  $y \in \R^{I}$  such that  $||y||_{\infty}= \sup_{i \in I} |y(i)| < \infty.$  
We also work with  the space $\mathfrak c$ consisting of all $y \in \R^\N$ where $\left\{y(i)\right\}_{i \in \N}$ is a convergent sequence and the space $\mathfrak c_0$ of all sequences convergent to $0$.

The spaces  $\mathfrak c$   and  $\ell_p$ for $1 \le p \le \infty$ defined above have special structure that is often used in examples in this paper. First, these spaces are Banach sublattices of $\R^\N$ (or  $\R^{I}$ in the case of $\ell_{\infty}(I)$) (see Chapter 9 of \cite{hitchhiker} for a precise definition). If $Y$ is a  Banach lattice, then  the positive linear functionals  in the algebraic dual $Y^{\prime}$ correspond exactly to the positive  linear functionals that are continuous in the norm topology on $Y$ that is used to define the Banach lattice. This follows from (a) Theorem~9.11 in Aliprantis and Border \cite{hitchhiker}, which shows that the norm dual $Y^*$ and the order dual $Y^\sim$ are equivalent in a Banach lattice and (b) Proposition~2.4 in Martin et al. \cite{martin-stern-ryan} that shows that the set of positive linear functionals in the algebraic dual  and the positive linear functionals in the order dual are identical.  This allows us to define $\text{DSILP}(\mathfrak c))$ and $\text{DSILP}(\ell_{p}))$ using the norm dual of $\mathfrak c$ and $\ell_{p},$ respectively.

For the constraint space $Y = \mathfrak c$ the linear functionals in its norm dual are characterized by 
\begin{align}\label{eq:define-c-functional}
\psi_{w \oplus r}(y) = \sum_{i=1}^\infty w_iy_i + ry_\infty
\end{align}
for all  $y \in \mathfrak c$ where $w \oplus r$ belong to $ \ell_1 \oplus \R$  and  $y_\infty = \lim_{i \to \infty} y_i \in \R$.  See Theorem 16.14 in Aliprantis and Border \cite{hitchhiker} for details.
This implies the positive linear functionals for $(\text{DSILP}(\mathfrak c))$ are isomorphic to vectors $w \oplus r \in (\ell_1)_+ \oplus \R_+$.   For obvious reasons, we call the linear functional $\psi_{0 \oplus 1}$ where $\psi_{0 \oplus 1} (y) = y_\infty$ the \emph{limit functional}. 

When $1 \le p < \infty$, the  linear functionals in the norm dual  are represented by sequences in the conjugate space $\ell_q$ with $1/p + 1/q = 1$. For $p = \infty$ and $I = \N,$  the   linear functionals $\psi$ in the norm  dual of $\ell_{\infty}(\N)$ can be expressed as $\psi = \ell_{1}  \oplus \ell_{1}^{d}$ where $\ell_{1}^{d}$ is the disjoint complement of $\ell_{1}$ and consists of all the singular linear functionals (see Chapter 8 of  Aliprantis and Border \cite{hitchhiker} for a definition of singular functionals). 
By  Theorem 16.31 in Aliprantis and Border \cite{hitchhiker}, for every functional $\psi \in \ell_1^d$ there exists some constant $r\in \R$ such that $\psi(y) =r  \lim_{i \to \infty} y(i)$ for $y \in \mathfrak c$.

\begin{remark}\label{rem:finite-OV}
If there is a $b$ such that $-\infty< OV(b) <\infty$ then  $-\infty< OV(0) <\infty$.   The first inequality follows from the fact that~\eqref{eq:SILP} is feasible and bounded for the given $b$ and the second inequality follows from feasibility of the zero solution.
Therefore, $OV(0)=0$ because in this case we are minimizing over a cone and we get a bounded value. 
\end{remark}

%%%%%%%%%%%%%%%%%%%%
%%%%%%%%%%%%%%%%%%%%
% fourier-motizkin.tex
%%%%%%%%%%%%%%%%%%%%
%%%%%%%%%%%%%%%%%%%%

\section{Fourier-Motzkin elimination and its connection to duality}\label{s:fm-elimination-duality}

In this section we recall needed results from Basu et al. \cite{basu2013projection} on the Fourier-Motzkin elimination procedure for SILPs and the tight connection of this approach to the finite support dual. We also use the  Fourier-Motzkin elimination procedure to derive new results that  are applied to more general duals in later sections. 

To apply the Fourier-Motzkin elimination procedure we put \eqref{eq:SILP} into the ``standard'' form
\begin{eqnarray}  \qquad \inf z \phantom{-  c_{1} x_{1} -  c_{2} x_{2} - \cdots - c_{n} x_{n} + }   && \label{eq:initial-system-obj} \nonumber \\
\textrm{s.t.}  \quad z -  c_{1} x_{1} -  c_{2} x_{2} - \cdots - c_{n} x_{n}   &\ge&   0   \label{eq:initial-system-obj-con} \\
 \phantom{z + } a^{1}(i) x_{1} + a^{2}(i) x_{2} + \cdots + a^{n}(i) x_{n}  &\ge&  b(i)\quad \text{ for } i \in I.  \label{eq:initial-system-con}
\end{eqnarray}
The procedure takes \eqref{eq:initial-system-obj-con}-\eqref{eq:initial-system-con} as input and outputs the system
\begin{equation}\label{eq:J_system} 
\begin{array}{rcl}
\inf z && \\
0 &\ge&  \tilde{b}(h), \quad  h \in I_{1} \\
\phantom{z + } \tilde{a}^{\ell}(h) x_{\ell} + \tilde{a}^{\ell+1}(h) x_{\ell+1} + \cdots + \tilde{a}^{n}(h) x_{n}  &\ge& \tilde{b}(h), \quad h \in I_{2} \\
z    &\ge&   \tilde{b}(h), \quad  h \in I_{3} \\
z + \tilde{a}^{\ell}(h) x_{\ell} + \tilde{a}^{\ell+1}(h) x_{\ell+1} + \cdots + \tilde{a}_{n}(h) x_{n} &\ge& \tilde{b}(h), \quad h \in I_{4}
\end{array}
\end{equation}
where $I_1$, $I_2$, $I_3$ and $I_4$ are disjoint with $I_3 \cup I_4 \neq \emptyset$. Define $H := I_1 \cup \cdots \cup I_4$. The procedure also provides a set of finite support vectors $\{u^h\in \R^{(I)}_+: h \in H\}$ (each $u^h$ is associated with a constraint in~\eqref{eq:J_system}) such that $\tilde a^k(h) = \langle a^k, u^h \rangle$ for $\ell \le k \le n$ and $\tilde b(h) = \langle b, u^h \rangle.$  Moreover, for every $k=\ell, \ldots, n$, either $\tilde a^k(h) \geq 0$ for all $h \in I_2 \cup I_4$ or $\tilde a^k(h) \leq 0$ for all $h \in I_2 \cup I_4$. Further, for every $h\in I_2 \cup I_4$, $\sum_{k=\ell}^n |\tilde a^k(h)| > 0$.    Goberna et al.~\cite{Goberna2010209} also applied Fourier-Motzkin elimination to semi-infinite linear systems.  Their Theorem 5 corresponds to Theorem 2 in Basu et al. \cite{basu2013projection} and states that~\eqref{eq:J_system} is the projection of~\eqref{eq:initial-system-obj-con}-\eqref{eq:initial-system-con}.

The Fourier-Motzkin elimination procedure defines a linear operator called the Fourier-Motzkin operator and denoted $FM: \R^{\{0\}\cup I} \to \R^H$ where
\begin{eqnarray}
 FM(v) := (\langle v, u^h \rangle : h \in H) \textrm{ for all } v\in \R^{\{0\}\cup I}. \label{eq:define-FM}
\end{eqnarray}
The linearity of $FM$ is immediate from the linearity of $\langle \cdot , \cdot \rangle$. Observe that $FM$ is a positive operator since $u^h$ are nonnegative vectors in $\R^H$.  By construction,  $\tilde b = FM(0, b)$ and $\tilde a^k = FM((-c_k, a^k))$ for $k=1,\dots, n$.  

We  also  use the operator $\overline{FM} : \R^I \to \R^H$ defined by 
\begin{eqnarray}
\overline{FM}(y) := FM((0,y)).   \label{eq:define-FM-bar}
\end{eqnarray}
It is immediate from the properties of $FM$ that $\overline{FM}$ is also a positive linear operator.

\begin{remark}\label{rem:FM-op} 
See the description of the Fourier-Motzkin elimination procedure in  Basu et al. \cite{basu2013projection} and observe that the $FM$ operator does not change if we change $b$ in~\eqref{eq:SILP}. In what follows we assume a fixed $a^1, \ldots, a^n \in \R^I$ and $c \in \R^n$ and vary the right-hand-side $b$. This observation implies we have the same $FM$ operator for all SILPs with different right-hand-sides $y \in \R^I$. In particular,  the sets $I_1, \dots, I_4$ are the same for all right-hand-sides $y \in \R^I$. 
\end{remark}

The following basic lemma regarding the $FM$ operator is used throughout the paper.

\begin{lemma}\label{lemma:cute-little-trick}
For all $r \in \R$ and $y \in \R^I$,  $FM((r,y))(h) = r + FM((0,y))(h)$ for all $h \in I_3 \cup I_4$.
\end{lemma}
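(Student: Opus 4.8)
The plan is to unwind the definition of the $FM$ operator and to use the structural description of the Fourier-Motzkin procedure recorded just before the lemma. Recall that $FM(v) = (\langle v, u^h\rangle : h \in H)$ for $v \in \R^{\{0\}\cup I}$, where the $u^h \in \R^{(I)}_+$ are the finite support multiplier vectors produced by the elimination. Write $v = (r,y)$ with the first coordinate indexed by $0$ and the remaining coordinates indexed by $I$. By linearity of $\langle \cdot,\cdot\rangle$ we have
\[
FM((r,y))(h) = \langle (r,y), u^h\rangle = r\, u^h(0) + \langle y, u^h\rangle = r\, u^h(0) + FM((0,y))(h),
\]
so the claim reduces to showing that $u^h(0) = 1$ for every $h \in I_3 \cup I_4$.

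The key observation is that the coordinate indexed by $0$ corresponds precisely to the objective constraint $z - c_1 x_1 - \cdots - c_n x_n \ge 0$ in the initial system~\eqref{eq:initial-system-obj-con}, i.e.\ the unique constraint in which the variable $z$ appears. After Fourier-Motzkin elimination of $x_1,\ldots,x_{\ell-1}$, the constraints in the output system~\eqref{eq:J_system} that still contain $z$ are exactly those indexed by $I_3 \cup I_4$ (with coefficient $1$ on $z$), while those indexed by $I_1 \cup I_2$ do not contain $z$. I would argue that each surviving constraint of~\eqref{eq:J_system} is obtained as a nonnegative combination of the original constraints encoded by $u^h$, and that the coefficient of $z$ in that combination equals $u^h(0)$ times the coefficient of $z$ in~\eqref{eq:initial-system-obj-con}, namely $u^h(0) \cdot 1 = u^h(0)$. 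Hence $z$ survives in the $h$-th output constraint with coefficient exactly $u^h(0)$; since by construction the coefficient on $z$ is normalized to $1$ for every constraint in $I_3 \cup I_4$, we get $u^h(0) = 1$, and for $h \in I_1 \cup I_2$ we would get $u^h(0) = 0$ (this latter fact is not needed here but is the same phenomenon).

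The main obstacle is making the preceding paragraph rigorous without re-deriving the entire Fourier-Motzkin construction: I need the precise bookkeeping statement from Basu et al.~\cite{basu2013projection} that each $u^h$ is the multiplier vector realizing the $h$-th output constraint as a nonnegative combination of the input constraints, and that the variable $z$ is never eliminated (only $x_1,\ldots,x_{\ell-1}$ are), so its coefficient in the combination is tracked linearly and equals $u^h(0)$. Given that, the normalization convention built into the partition $I_1,\ldots,I_4$ — constraints with $z$ present appear with coefficient $1$ on $z$ and are placed in $I_3$ (no other variables) or $I_4$ (other variables present) — immediately yields $u^h(0) = 1$ on $I_3 \cup I_4$. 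Plugging $u^h(0)=1$ into the displayed identity above gives $FM((r,y))(h) = r + FM((0,y))(h)$ for all $h \in I_3 \cup I_4$, as required. If one prefers to avoid invoking the internal normalization, an alternative is to test the identity on a single known instance: since $\tilde b = FM((0,b))$ and, by Lemma~\ref{lemma:cute-little-trick}'s role in the paper, the $z \ge \tilde b(h)$ form of the $I_3$-constraints and the $z + (\text{terms}) \ge \tilde b(h)$ form of the $I_4$-constraints already exhibit coefficient $1$ on $z$; comparing with $FM((1,0))(h)$, which must equal the coefficient of $z$, again forces $u^h(0)=1$.
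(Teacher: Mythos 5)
Your proof is correct and follows essentially the same route as the paper: the paper likewise decomposes $FM((r,y)) = r\,FM((1,0,0,\dots)) + FM((0,y))$ by linearity and then argues that $FM((1,0,0,\dots))(h) = 1$ for $h \in I_3 \cup I_4$ because $(1,0,0,\dots)$ is the $z$-column and $z$ carries coefficient $1$ in those output constraints — which is exactly your claim that $u^h(0)=1$. Your write-up just makes the bookkeeping behind that step more explicit.
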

\begin{proof}
 By the linearity of the $FM$ operator $FM((r,y)) = r FM((1,0,0,\dots)) + FM((0,y)).$ If $h \in I_3 \cup I_4$ then $FM((1,0,0,\dots))(h) = 1$ because $(1,0,0,\dots)$ corresponds the $z$ column in \eqref{eq:initial-system-obj}-\eqref{eq:initial-system-con} and in \eqref{eq:J_system}, $z$ has a coefficient of $1$ for $h \in I_3 \cup I_4$. Hence, for $h \in I_3 \cup I_4$, $FM((r,y))(h) = r + FM((0,y))(h)$.
\end{proof}

Numerous properties of the primal-dual pair \eqref{eq:SILP}--\eqref{eq:FDSILP} are characterized in terms of the output system \eqref{eq:J_system}. The following functions play a key role in summarizing information encoded by this system.

\begin{definition}\label{def:S-L-OV-def}
Given a $y \in\R^I$, define $L(y) := \lim_{\delta \to \infty}\omega(\delta, y)$ where $\omega(\delta, y) := \sup \{\tilde{y}(h) - \delta \sum_{k=\ell}^{n} |\tilde{a}^k(h)| \, : \, h \in I_4 \}$, where $\tilde y = \overline{FM}(y)$.  Define $S(y) = \sup_{h \in I_3} \tilde y(h)$. 
\end{definition}

For any fixed $y\in \R^I$, $\omega(\delta, y)$ is a nonincreasing function in $\delta$. A key connection between the primal problem and these functions is given in Theorem~\ref{theorem:fm-primal-value}.

\begin{theorem}[Lemma 3 in Basu et  al. \cite{basu2013projection}]\label{theorem:fm-primal-value}
If  \eqref{eq:SILP} is  feasible then $OV(b) = \max\{S(b), L(b)\}.$
\end{theorem}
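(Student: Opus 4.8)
The plan is to unpack the Fourier-Motzkin output system~\eqref{eq:J_system} and relate the optimal value $OV(b)$ to the bounds on $z$ encoded there. Since Theorem~\ref{theorem:fm-primal-value} is attributed to Basu et al.~\cite{basu2013projection} and relies on the fact that~\eqref{eq:J_system} is the \emph{projection} of the original system~\eqref{eq:initial-system-obj-con}--\eqref{eq:initial-system-con} onto the $(z, x_\ell, \dots, x_n)$ coordinates, the first step is to invoke that projection property: a value $z$ is achievable by some feasible $(x_1, \dots, x_n)$ of~\eqref{eq:SILP} with $\sum_k c_k x_k = z$ if and only if there exist $x_\ell, \dots, x_n$ satisfying all the inequalities of~\eqref{eq:J_system}. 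Feasibility of~\eqref{eq:SILP} guarantees $I_1$ imposes no obstruction (the constraints $0 \ge \tilde b(h)$, $h \in I_1$, hold), so $OV(b)$ equals the infimum of $z$ over those $z$ for which the $I_2, I_3, I_4$ constraints admit a solution in $x_\ell, \dots, x_n$.

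Next I would analyze what lower bound on $z$ is forced. The $I_3$ constraints directly give $z \ge \tilde b(h) = \tilde y(h)$ for every $h \in I_3$, hence $z \ge S(b) = \sup_{h \in I_3} \tilde b(h)$. For the $I_4$ constraints, $z \ge \tilde b(h) - \sum_{k=\ell}^n \tilde a^k(h) x_k$; here one uses the sign-consistency property recorded after~\eqref{eq:J_system} (for each $k$, $\tilde a^k(h)$ has constant sign over $I_2 \cup I_4$) together with the $I_2$ constraints, which constrain the same $x_k$'s. The idea is that by driving each $x_k$ to $\pm\infty$ in the direction that makes $-\sum_k \tilde a^k(h) x_k$ as negative as possible — i.e., scaling $x_k$ like $\mp\delta$ — the best achievable $z$ over the $I_4$ block, subject to still satisfying $I_2$, converges to $L(b) = \lim_{\delta \to \infty} \omega(\delta, b)$. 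One must check that this scaling keeps the $I_2$ constraints $\tilde a^\ell(h) x_\ell + \dots \ge \tilde b(h)$ satisfiable for large $\delta$ (the sign pattern makes the left side grow, so for $\delta$ large they hold), and that no finite $z < \max\{S(b), L(b)\}$ is feasible. Combining, $OV(b) = \max\{S(b), L(b)\}$, with the convention that this is $-\infty$ if the relevant index sets are empty or the suprema diverge appropriately — though feasibility plus the structure of~\eqref{eq:J_system} (with $I_3 \cup I_4 \ne \emptyset$) controls this.

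The main obstacle is the careful bookkeeping of the $I_2$--$I_4$ interaction: one needs to argue both that $z = \max\{S(b),L(b)\}$ is approached (an ``achievability'' or limiting-sequence argument, constructing $x$'s parametrized by $\delta$) and that it cannot be beaten (a ``validity'' argument, showing every feasible $z$ is at least this value). Since this is exactly Lemma 3 of~\cite{basu2013projection}, the cleanest route in this paper is simply to cite that lemma and, if desired, sketch the projection-based reduction above; I would not reproduce the full $\delta$-scaling estimates here. The role of Lemma~\ref{lemma:cute-little-trick} is worth noting in the sketch — it explains why the $z$-coefficient is exactly $1$ throughout $I_3 \cup I_4$, which is what makes ``$z \ge (\text{something})$'' the correct reading of those rows.
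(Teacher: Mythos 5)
The paper gives no proof of this statement at all: it is imported verbatim as Lemma~3 of Basu et al.~\cite{basu2013projection}, and your proposal correctly identifies citation as the intended justification here. Your accompanying sketch of the underlying argument (projection property of \eqref{eq:J_system}, the trivial bound $z \ge S(b)$ from $I_3$, the $\delta$-scaling along the sign-consistent directions to reach $\omega(\delta,b)$ while preserving $I_2$, and the converse bound $z \ge \omega(\|x\|_\infty, b) \ge L(b)$ for any feasible point) is a faithful outline of how the cited lemma is actually proved, so there is nothing to object to.
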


The following result describes useful properties of the functions $L$, $S$ and $OV$ that facilitate our approach to sensitivity analysis when perturbing the right-hand-side vector.

\begin{lemma}\label{lem:convex-L}
$L(y)$, $S(y)$, and $OV(y)$ are sublinear functions of $y \in \R^I$. 
\end{lemma}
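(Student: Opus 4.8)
The plan is to prove sublinearity of each of $L$, $S$, and $OV$ by establishing the two defining properties: positive homogeneity ($f(\lambda y) = \lambda f(y)$ for $\lambda \geq 0$) and subadditivity ($f(y_1 + y_2) \leq f(y_1) + f(y_2)$). The main structural input is that $\overline{FM}$ is a \emph{linear} operator, so all three functions are built by applying simple sublinear aggregation operations (suprema, limits of suprema) to a linear image of $y$.

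First I would handle $S$. Since $\overline{FM}$ is linear, $\widetilde{y_1 + y_2} = \widetilde{y_1} + \widetilde{y_2}$ and $\widetilde{\lambda y} = \lambda \widetilde{y}$ for $\lambda \geq 0$. Then $S(\lambda y) = \sup_{h \in I_3} \lambda \widetilde y(h) = \lambda \sup_{h \in I_3} \widetilde y(h) = \lambda S(y)$ (using $\lambda \geq 0$), and $S(y_1 + y_2) = \sup_{h \in I_3}(\widetilde{y_1}(h) + \widetilde{y_2}(h)) \leq \sup_{h \in I_3}\widetilde{y_1}(h) + \sup_{h \in I_3}\widetilde{y_2}(h) = S(y_1) + S(y_2)$, the standard fact that a supremum of a sum is at most the sum of suprema. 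One caveat: if $I_3 = \emptyset$ then $S(y) = \sup \emptyset = -\infty$; I would either note this is excluded by the running assumptions (or that $I_3 \cup I_4 \neq \emptyset$ and handle the degenerate case by convention), or simply observe that $-\infty$ satisfies the inequalities trivially.

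Next, for $L$, recall $L(y) = \lim_{\delta \to \infty} \omega(\delta, y)$ where $\omega(\delta, y) = \sup_{h \in I_4}\big(\widetilde y(h) - \delta \sum_{k=\ell}^n |\widetilde a^k(h)|\big)$; note the coefficients $\widetilde a^k(h)$ do not depend on $y$ (Remark~\ref{rem:FM-op}). For fixed $\delta \geq 0$, the same supremum-of-sums argument gives $\omega(\delta, y_1 + y_2) \leq \omega(\delta, y_1) + \omega(\delta, y_2)$ and $\omega(\delta, \lambda y) = \lambda\,\omega(\delta', y)$ — here one must be slightly careful: $\lambda \widetilde y(h) - \delta\sum|\widetilde a^k(h)| = \lambda\big(\widetilde y(h) - (\delta/\lambda)\sum|\widetilde a^k(h)|\big)$ for $\lambda > 0$, so $\omega(\delta, \lambda y) = \lambda\,\omega(\delta/\lambda, y)$, and taking $\delta \to \infty$ gives $L(\lambda y) = \lambda L(y)$ (for $\lambda = 0$ check directly, noting $\omega(\delta, 0) = \sup_{h\in I_4}(-\delta\sum_{k}|\widetilde a^k(h)|) \leq 0$ and the limit is $0$ or $-\infty$). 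Since $\omega(\cdot, y)$ is nonincreasing in $\delta$ (as stated after Definition~\ref{def:S-L-OV-def}), the limit $L(y)$ exists in $[-\infty, \infty)$, and passing to the limit in the subadditivity inequality for $\omega$ preserves it: $L(y_1 + y_2) \leq L(y_1) + L(y_2)$, again with the convention that $-\infty$ on either side makes the inequality hold.

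Finally, $OV(y) = \max\{S(y), L(y)\}$ by Theorem~\ref{theorem:fm-primal-value} (when \eqref{eq:SILP} is feasible for right-hand-side $y$). The pointwise maximum of two sublinear functions is sublinear: positive homogeneity is immediate since $\max\{\lambda S(y), \lambda L(y)\} = \lambda\max\{S(y), L(y)\}$ for $\lambda \geq 0$, and subadditivity follows from $\max\{S(y_1+y_2), L(y_1+y_2)\} \leq \max\{S(y_1)+S(y_2), L(y_1)+L(y_2)\} \leq \max\{S(y_1),L(y_1)\} + \max\{S(y_2),L(y_2)\}$. The one genuine subtlety — and the step I expect to require the most care — is the domain/feasibility issue: $OV(y)$ is only characterized by $\max\{S(y),L(y)\}$ when the primal is feasible for that right-hand side, so I would either restrict attention to the set of feasible right-hand sides (and check it is a cone, which follows since scaling and summing feasible systems preserves feasibility of the homogeneous-in-$y$ constraints once one accounts for the $z$ variable being free) or interpret $OV(y) = +\infty$ for infeasible $y$, in which case the inequalities still hold vacuously. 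I would state the feasibility hypothesis explicitly to keep the argument clean, mirroring the hypothesis in Theorem~\ref{theorem:fm-primal-value}.
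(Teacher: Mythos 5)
Your treatments of $S$ and of $OV$ (max of two sublinear functions, with the feasibility caveat for invoking Theorem~\ref{theorem:fm-primal-value} --- a point the paper glosses over) are fine and match the paper. The gap is in the subadditivity of $L$: the claimed fixed-$\delta$ inequality $\omega(\delta, y_1+y_2) \le \omega(\delta,y_1) + \omega(\delta,y_2)$ is false in general. The ``supremum of a sum'' argument does not apply, because the integrand of $\omega(\delta,y_1+y_2)$ is \emph{not} the sum of the integrands of $\omega(\delta,y_1)$ and $\omega(\delta,y_2)$: the penalty term $-\delta\sum_{k=\ell}^n|\tilde a^k(h)|$ appears once on the left but would be counted twice on the right, and since it is nonpositive the pointwise comparison actually goes the wrong way. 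Concretely, if $I_4=\{h\}$ with $\sum_{k=\ell}^n|\tilde a^k(h)|=1$ and $\tilde y_1(h)=\tilde y_2(h)=0$, then $\omega(\delta,y_1+y_2)=-\delta > -2\delta = \omega(\delta,y_1)+\omega(\delta,y_2)$ for every $\delta>0$.

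The repair is the same rescaling device you correctly used for positive homogeneity, and it is exactly what the paper does: write $\tilde y_1(h)+\tilde y_2(h)-\delta\sum_k|\tilde a^k(h)| = \bigl(\tilde y_1(h)-\tfrac{\delta}{2}\sum_k|\tilde a^k(h)|\bigr)+\bigl(\tilde y_2(h)-\tfrac{\delta}{2}\sum_k|\tilde a^k(h)|\bigr)$, so that $\omega(\delta,y_1+y_2)\le\omega(\tfrac{\delta}{2},y_1)+\omega(\tfrac{\delta}{2},y_2)$; then let $\delta\to\infty$ and use that $\tfrac{\delta}{2}\to\infty$ as well, so the right-hand side converges to $L(y_1)+L(y_2)$. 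With that one step replaced, your argument goes through and coincides with the paper's proof.
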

\begin{proof} We first show the sublinearity of $L(y)$. For any $y, w \in \R^I$, denote $\tilde y = \overline{FM}(y)$ and $\tilde w = \overline{FM}(w)$. Thus $\overline{FM}(y + w) = \overline{FM}(y) + \overline{FM}(w) = \tilde y + \tilde w$ by the linearity of the $\overline{FM}$ operator. Observe that

$$\begin{array}{rcl}\omega(\delta, y+ w) &= &\sup \{\tilde{y}(h)+\tilde{w}(h) - \delta \sum_{k=\ell}^{n} |\tilde{a}^k(h)| \, : \, h \in I_4 \} \\
& = &  \sup \{(\tilde{y}(h) - \frac\delta2 \sum_{k=\ell}^{n} |\tilde{a}^k(h)|)+(\tilde{w}(h) - \frac\delta2  \sum_{k=\ell}^{n} |\tilde{a}^k(h)|) \, : \, h \in I_4 \} \\
& \leq & \sup \{(\tilde{y}(h) - \frac\delta2 \sum_{k=\ell}^{n} |\tilde{a}^k(h)|) \, : \, h \in I_4 \} + \sup\{(\tilde{w}(h) - \frac\delta2  \sum_{k=\ell}^{n} |\tilde{a}^k(h)|) \, : \, h \in I_4 \} \\
& = & \omega(\frac\delta2; y) + \omega(\frac\delta2;w)
\end{array}$$
Thus, $ L(y+w) = \lim_{\delta\to\infty}\omega(\delta, y+ w) \leq  \lim_{\delta\to\infty}\omega(\frac\delta2; y) + \lim_{\delta\to\infty}\omega(\frac\delta2; w) =  \lim_{\delta\to\infty}\omega(\delta, y) + \lim_{\delta\to\infty}\omega(\delta, w) = L(y) + L(w)$. This establishes the subadditivity of $L(y)$.  

Observe that for any $\lambda > 0$ and $y \in \R^I$, we have $\omega(\delta,\lambda y) = \lambda\omega(\frac\delta\lambda; y)$ and therefore $L(\lambda y) = \lim_{\delta\to\infty}\omega(\delta, \lambda y) = \lim_{\delta\to\infty}\lambda \omega(\frac\delta\lambda; y) = \lambda \lim_{\delta\to\infty} \omega(\frac\delta\lambda; y) = \lambda \lim_{\delta\to\infty} \omega(\delta, y) =\lambda L(y)$. This establishes the sublinearity of $L(y)$. 

We now show the sublinearity of $S(y)$. Let $y, w \in \R^I$, then
\begin{align*}
S(y + w) &= \sup \left\{\tilde y(h) + \tilde w(h) : h \in I_3 \right\} \\
         &\le \sup \left\{\tilde y(h) : h \in I_3 \right\} + \sup \left\{\tilde w(h) : h \in I_3 \right\} \\
         &= S(y) + S(w).
\end{align*}
For any $\lambda > 0$ we also have $S(\lambda y) = \lambda S(y)$ by the definition of supremum. This establishes that $S(y)$ is a sublinear function.

Finally, since $OV(y) = \max \left\{L(y), S(y)\right\}$ and $L(y)$ and $S(y)$ are sublinear functions, it is immediate that $OV(y)$ is sublinear. 
\end{proof}

The values $S(b)$ and $L(b)$ are used to characterize when \eqref{eq:SILP}--\eqref{eq:FDSILP} have zero duality gap.

\begin{theorem}[Theorem 13 in Basu et  al. \cite{basu2013projection}]\label{theorem:zero-duality-gap}  The optimal value of~\eqref{eq:SILP} is equal to the optimal value of~\eqref{eq:FDSILP}  if and only if (i) \eqref{eq:SILP} is feasible and (ii) $S(b) \ge L(b)$.
\end{theorem}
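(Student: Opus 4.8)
The plan is to establish the sharper claim that whenever \eqref{eq:SILP} is feasible the optimal value of \eqref{eq:FDSILP} is exactly $S(b)$, and then read off the theorem from Theorem~\ref{theorem:fm-primal-value}. Granting that sharper claim: if \eqref{eq:SILP} is feasible then $OV(b)=\max\{S(b),L(b)\}$, so the value of \eqref{eq:FDSILP}, namely $S(b)$, equals $OV(b)$ precisely when $S(b)=\max\{S(b),L(b)\}$, i.e. $S(b)\ge L(b)$; this gives the equivalence under the feasibility hypothesis. Condition (i) is then bookkeeping: by weak duality the value of \eqref{eq:FDSILP} never exceeds $OV(b)$, so if the two optimal values are equal and finite then $OV(b)<+\infty$ and \eqref{eq:SILP} is feasible, while if \eqref{eq:SILP} is feasible with $OV(b)=-\infty$ then both values are $-\infty$ and $S(b)=L(b)=-\infty$, so (ii) holds. (When \eqref{eq:SILP} is infeasible one treats the values as unequal, consistent with the paper's standing interest in feasible problems.)

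So assume \eqref{eq:SILP} is feasible and let $v^{\mathrm{FD}}$ denote the value of \eqref{eq:FDSILP}. For $v^{\mathrm{FD}}\ge S(b)$ I would use the finite support vectors $u^h$ produced by the procedure. For $h\in I_3$ the $h$-th row of \eqref{eq:J_system} is $z\ge\tilde b(h)$; since $u^h$ records this row as a nonnegative combination of the objective constraint and the constraints of \eqref{eq:SILP}, its $z$-coefficient equals $1$ (Lemma~\ref{lemma:cute-little-trick}) and its $x_k$-coefficient equals $0$ for every $k$. Reading off these two identities shows that the restriction $\hat u^h\in\R^{(I)}_+$ of $u^h$ to the constraint indices is feasible for \eqref{eq:FDSILP} with $\sum_{i\in I}a^k(i)\hat u^h(i)=c_k$ and objective value $\sum_{i\in I}b(i)\hat u^h(i)=\tilde b(h)$. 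Taking the supremum over $h\in I_3$ yields $v^{\mathrm{FD}}\ge\sup_{h\in I_3}\tilde b(h)=S(b)$ (trivially so when $I_3=\emptyset$).

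The reverse inequality $v^{\mathrm{FD}}\le S(b)$ is the crux, and here I would pass through the output system: by the tight connection between Fourier--Motzkin elimination and the finite support dual from Basu et al.~\cite{basu2013projection}, the value of \eqref{eq:FDSILP} equals the value of the finite support dual of \eqref{eq:J_system}, so it suffices to bound the latter. A feasible point of that dual is a $v\in\R^{(H)}_+$ with $\sum_{h\in I_3\cup I_4}v_h=1$ (the $z$-column has coefficient $1$ exactly on $I_3\cup I_4$) and $\sum_{h\in I_2\cup I_4}\tilde a^k(h)v_h=0$ for each $k=\ell,\dots,n$ (the $x_k$-columns). Fix $h_0\in I_2\cup I_4$: by the procedure's properties some $\tilde a^k(h_0)\ne 0$, and for that $k$ all the numbers $\tilde a^k(h)$, $h\in I_2\cup I_4$, share a common sign, so the $k$-th equation forces every summand to vanish; in particular $v_{h_0}=0$. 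Hence $v$ is supported on $I_1\cup I_3$ with $\sum_{h\in I_3}v_h=1$. Since feasibility of \eqref{eq:J_system} forces $\tilde b(h)\le 0$ for $h\in I_1$, the objective of $v$ is $\sum_{h\in I_1}v_h\tilde b(h)+\sum_{h\in I_3}v_h\tilde b(h)\le\sum_{h\in I_3}v_h\tilde b(h)\le S(b)$, as wanted.

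The main obstacle is the inequality $v^{\mathrm{FD}}\le S(b)$, and inside it the reduction to the finite support dual of \eqref{eq:J_system} — concretely, the assertion that the value of \eqref{eq:FDSILP} is \emph{at most} that of the finite support dual of the output system. This is exactly where the completeness of the Fourier--Motzkin procedure enters: it formalizes the fact that a finite support dual solution can exploit only the ``fully eliminated'' information recorded in $I_1$ and $I_3$, not the limiting behaviour of the surviving columns $\tilde a^k$ on $I_4$ that generates $L(b)$. Once that reduction is granted, the sign argument above is routine, and the ``$\ge$'' direction is direct from the construction of the $u^h$.
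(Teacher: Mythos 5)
The paper does not prove this statement: it is imported verbatim as Theorem~13 of Basu et al.~\cite{basu2013projection}, so there is no in-paper argument to compare against. Your reconstruction is sound and follows the same Fourier--Motzkin route as the cited source. You prove the sharper fact that under primal feasibility the value of \eqref{eq:FDSILP} equals $S(b)$ and then read the theorem off Theorem~\ref{theorem:fm-primal-value}. The lower bound via the multipliers $u^h$ for $h\in I_3$ (unit $z$-component, vanishing aggregated $x_k$-coefficients, hence restrictions to $I$ that are feasible for \eqref{eq:FDSILP} with value $\tilde b(h)$) is correct, and your sign argument showing that any finite support dual solution of \eqref{eq:J_system} must vanish on $I_2\cup I_4$ is also correct --- it is exactly the mechanism by which the limiting quantity $L(b)$ escapes the finite support dual. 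The two steps you leave to citation are genuine but are precisely the content of \cite{basu2013projection}: the transfer inequality bounding the value of \eqref{eq:FDSILP} by the value of the finite support dual of the output system (the completeness of the elimination procedure, Lemma~5 there), and the fact that primal feasibility forces $\tilde b(h)\le 0$ for all $h\in I_1$ (Theorem~6 there); both are the same external ingredients the present paper itself invokes in its proof of Theorem~\ref{theorem:goberna-sensitivity}. With those two imports made explicit your argument is a complete proof; as written it is a correct reduction to the cited machinery, which is no less than what the paper provides.
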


The next lemma is useful in cases where $L(b) > S(b)$ and hence (by Theorem~\ref{theorem:zero-duality-gap}) the finite support dual has a duality gap. A less general version of the result appeared as Lemma~7 in Basu et al. \cite{basu2013projection}.

\begin{lemma}\label{lem:seq-L}
Suppose $y \in \R^I$ and $\tilde y = \overline{FM}(y)$. If  $\{\tilde y(h_m) \}_{m\in \N}$ is any convergent sequence with indices  $h_{m} $ in $I_4$   such that $\lim_{m\to\infty} \sum_{k=\ell}^n |\tilde a^k(h_m)| \to 0,$  then $\lim_{m\to\infty} \tilde y(h_m) \leq L(y)$.  Furthermore, if $L(y)$ is finite,  there exists a sequence of distinct indices $h_m$ in $I_4$ such that $\lim_{m \to \infty}\tilde y(h_m) = L(y)$ and  $\lim_{m\to \infty}\tilde a^k(h_m) = 0$ for $k = 1, \ldots, n$.  \end{lemma}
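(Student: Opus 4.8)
The statement has two parts, and I would handle them in order. For the first part, suppose $\{\tilde y(h_m)\}_{m \in \N}$ is a convergent sequence with $h_m \in I_4$ and $\sum_{k=\ell}^n |\tilde a^k(h_m)| \to 0$. I want to show $\lim_m \tilde y(h_m) \le L(y)$. The plan is to fix an arbitrary $\delta > 0$ and observe that for every $m$,
$$\tilde y(h_m) = \Big(\tilde y(h_m) - \delta \textstyle\sum_{k=\ell}^n |\tilde a^k(h_m)|\Big) + \delta \textstyle\sum_{k=\ell}^n |\tilde a^k(h_m)| \le \omega(\delta, y) + \delta \textstyle\sum_{k=\ell}^n |\tilde a^k(h_m)|,$$
using the definition of $\omega(\delta, y)$ as a supremum over $I_4$. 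Letting $m \to \infty$, the last term vanishes by hypothesis, so $\lim_m \tilde y(h_m) \le \omega(\delta, y)$. Since $\delta > 0$ was arbitrary and $\omega(\delta, y) \to L(y)$ as $\delta \to \infty$ (and $\omega$ is nonincreasing in $\delta$), we get $\lim_m \tilde y(h_m) \le L(y)$.

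For the second part, assume $L(y)$ is finite. I need a sequence of \emph{distinct} indices $h_m \in I_4$ with $\tilde y(h_m) \to L(y)$ and $\tilde a^k(h_m) \to 0$ for each $k$. The idea is to exploit the definition $L(y) = \lim_{\delta \to \infty} \omega(\delta, y)$: for each positive integer $m$, since $\omega(m, y) \ge L(y)$ (monotonicity) and $\omega(m,y)$ is a supremum over $I_4$, pick $h_m \in I_4$ with
$$\tilde y(h_m) - m \textstyle\sum_{k=\ell}^n |\tilde a^k(h_m)| > \omega(m, y) - \tfrac{1}{m} \ge L(y) - \tfrac1m.$$
Since $\tilde y(h_m) \le \omega(1, y) + \sum_{k=\ell}^n |\tilde a^k(h_m)| \cdot 1$ is bounded above (here one must check $\tilde y$ restricted to such near-feasible indices stays bounded — more carefully, rearranging gives $m \sum_k |\tilde a^k(h_m)| < \tilde y(h_m) - L(y) + \tfrac1m$, and $\tilde y(h_m) \le \omega(0,y)$ if $\omega(0,y)$ is finite, or one bounds $\tilde y(h_m)$ via $\omega(\delta,y)$ for a fixed $\delta$), we conclude $\sum_{k=\ell}^n |\tilde a^k(h_m)| \to 0$, hence each $\tilde a^k(h_m) \to 0$. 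Feeding this back, $\tilde y(h_m) = (\tilde y(h_m) - m\sum_k|\tilde a^k(h_m)|) + m\sum_k|\tilde a^k(h_m)|$; the first summand is squeezed between $L(y) - \tfrac1m$ and $\omega(m,y) \to L(y)$, but I must ensure the correction term $m \sum_k |\tilde a^k(h_m)|$ also vanishes — this does not follow merely from $\sum_k|\tilde a^k(h_m)| \to 0$. Instead, I would argue directly: $\tilde y(h_m) \ge \tilde y(h_m) - m\sum_k |\tilde a^k(h_m)| > L(y) - \tfrac1m$ gives $\liminf_m \tilde y(h_m) \ge L(y)$, while the first part (applied once we know $\sum_k|\tilde a^k(h_m)|\to 0$, passing to a convergent subsequence) gives $\limsup_m \tilde y(h_m) \le L(y)$; combining yields $\tilde y(h_m) \to L(y)$ along that subsequence, and relabeling gives the full sequence.

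The remaining issue is \emph{distinctness} of the $h_m$. If only finitely many distinct indices appear, then some single $\bar h \in I_4$ is chosen infinitely often, forcing $\sum_{k=\ell}^n |\tilde a^k(\bar h)| = 0$; but the Fourier–Motzkin output guarantees $\sum_{k=\ell}^n |\tilde a^k(h)| > 0$ for every $h \in I_2 \cup I_4$ (stated in the excerpt), a contradiction. So infinitely many distinct indices occur, and by passing to a subsequence we may take the $h_m$ pairwise distinct while preserving all the limit properties established above. The main obstacle I anticipate is the bookkeeping in the second part — specifically, controlling the term $m\sum_k |\tilde a^k(h_m)|$, which is a product of something going to $\infty$ and something going to $0$; the clean way around it is to avoid estimating that term directly and instead sandwich $\tilde y(h_m)$ between the lower bound $L(y) - \tfrac1m$ and the upper bound coming from Part 1, rather than trying to show the correction is negligible.
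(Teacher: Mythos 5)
Your argument is correct, and in the second half it is genuinely leaner than the paper's. For the first part the underlying inequality is the same one the paper exploits, but you apply it uniformly: the single estimate $\tilde y(h_m) \le \omega(\delta,y) + \delta\sum_{k=\ell}^n|\tilde a^k(h_m)|$, followed by $m\to\infty$ and then $\delta\to\infty$, replaces the paper's separate treatment of the cases $L(y)=\pm\infty$ and $L(y)$ finite (the latter done there via a diagonal choice of $\delta_p$ and extraction of a subsequence $h_{m_p}$). For the second part the paper splits $I_4$ into $\bar I = \{h \in I_4 : \tilde y(h) < L(y)\}$ and its complement, introduces the auxiliary function $\bar\omega$, and runs two different sandwich arguments according to whether $\lim_{\delta\to\infty}\bar\omega(\delta,y)=-\infty$; you avoid this case split entirely by choosing $h_m$ to nearly attain $\omega(m,y)$ and then obtaining the lower bound $\liminf_m \tilde y(h_m)\ge L(y)$ from the selection and the upper bound $\limsup_m \tilde y(h_m)\le L(y)$ from Part 1 --- which is exactly the right way to sidestep the troublesome product $m\sum_{k=\ell}^n|\tilde a^k(h_m)|$ that you flag. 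The one step you should write out in full is the deduction that $\sum_{k=\ell}^n|\tilde a^k(h_m)|\to 0$: fix $\delta_0$ with $\omega(\delta_0,y)<\infty$ (possible since $L(y)<\infty$ and $\omega$ is nonincreasing in $\delta$), combine $\tilde y(h_m)\le\omega(\delta_0,y)+\delta_0\sum_{k=\ell}^n|\tilde a^k(h_m)|$ with the selection inequality $m\sum_{k=\ell}^n|\tilde a^k(h_m)|<\tilde y(h_m)-L(y)+\tfrac1m$ to get $(m-\delta_0)\sum_{k=\ell}^n|\tilde a^k(h_m)|<\omega(\delta_0,y)-L(y)+\tfrac1m$, and divide by $m-\delta_0$. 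Your distinctness argument coincides with the paper's brief remark based on $\sum_{k=\ell}^n|\tilde a^k(h)|>0$ for $h\in I_4$; and for $k<\ell$ the claim $\tilde a^k(h_m)\to 0$ is automatic since those coefficients vanish identically on $I_4$.
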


\begin{proof}

We prove the  first part of the Lemma.  Let  $\{\tilde y(h_m) \}_{m\in \N}$ be  a convergent sequence with indices  $h_{m} $ in $I_4$      such that $\lim_{m\to\infty} \sum_{k=\ell}^n |\tilde a^k(h_m)| \to 0.$  We show  that $\lim_{m\to\infty} \tilde y(h_m) \leq L(y)$.  If $L(y) = \infty$ the result is immediate. Next assume $L(y) = -\infty.$  Since $\lim_{m\to\infty} \sum_{k=\ell}^n |\tilde a^k(h_m)| \to 0$, for every $\delta>0$, there exists $N_{\delta}\in \N$  such that for all $m \ge N_{\delta},$  $\sum_{k=\ell}^n |\tilde a^k(h_m)|  < \frac{1}{\delta}.$ Then
\begin{eqnarray*}
\omega(\delta, y) &=& \sup\{  \tilde{y}(h) -  \delta \sum_{k=\ell}^{n} |\tilde{a}^k(h)| \, : \, h \in I_{4} \} \\
&\ge& \sup\{\tilde{y}(h_{m}) - \delta  \sum_{k=\ell}^{n} |\tilde{a}^k(h_{m})| \, : \, m \in \N \}  \\
&\ge& \sup\{\tilde{y}(h_{m}) - \delta  \sum_{k=\ell}^{n} |\tilde{a}^k(h_{m})| \, : \, m \in \N, \, \, m \ge N_\delta \} \\
&\ge& \sup\{\tilde{y}(h_{m}) - \delta (\frac{1}{\delta}) \, : \, m \in \N, \, \, m \ge N_\delta \}  \\
&=&  \sup\{\tilde{y}(h_{m})  \, : \, m \in \N, \, \, m \ge N_\delta \}  - 1 \\
&\ge& \lim_{m \to \infty}\tilde y(h_m) -1.
\end{eqnarray*}
Therefore, $-\infty = L(y) = \lim_{\delta\to\infty}\omega(\delta,y) \geq \lim_{m \to \infty}\tilde y(h_m) -1$ which implies $\lim_{m \to \infty}\tilde y(h_m) = -\infty$.

Now consider the case where  $\{\tilde y(h_m) \}_{m \in \N}$  is a convergent sequence and $L(y)$ is finite.   
Therefore, if  we can  find a subsequence $\{\tilde y(h_{m_{p}}) \}_{p \in \N}$ of $\{\tilde y(h_m) \}_{m \in \N} $  such that  $\lim_{p \to\infty} \tilde y(h_{m_{p}}) \leq L(y)$ it follows that  $\lim_{m\to\infty} \tilde y(h_m) \leq L(y).$  Since $\lim_{\delta \to \infty}\omega(\delta, y) = L(y)$, there is a sequence $(\delta_p)_{p \in \N}$ such that $\delta_p \geq 0$ and $\omega(\delta_p,y) < L(y) + \frac{1}{p}$ for all $p \in \N$. Moreover, $\lim_{m \to \infty} \sum_{k=\ell}^n |\tilde{a}^k(h_m)| = 0$, implies that for every $p \in \N$ there is an $m_{p} \in \N$ such that for all $m \ge m_{p},$ $\delta_{p} \sum_{k = \ell}^{n} | \tilde{a}^{k}(h_{m})| < \frac{1}{p}.$ Thus, one can extract a subsequence $(h_{m_p})_{p\in \N}$ of $(h_m)_{m\in \N}$ such that $\delta_{p} \sum_{k = \ell}^{n} | \tilde{a}^{k}(h_{m_p})| < \frac{1}{p}$ for all $p \in \N.$ Then
\[
L(y) + \frac{1}{p} > \omega(\delta_{p},y) = \sup \{\tilde{y}(h) - \delta_p \sum_{k=\ell}^{n} |\tilde{a}^k(h)| \, : \, h \in I_4 \} \ge \tilde{y}(h_{m_{p}}) -  \delta_{p} \sum_{k = \ell}^{n} | \tilde{a}^{k}(h_{m_{p}})| > \tilde{y}(h_{m_{p}}) - \frac1p.
\]
Thus  $\tilde y(h_{m_p}) < L(y) + \frac2p$ which implies $\lim_{p\to\infty}\tilde y(h_{m_p}) \leq L(y)$. % 
\bigskip

Now show the second part of the Lemma that if  $L(y)$ is  finite,  then  there exists a sequence of distinct indices $h_m$ in $I_4$ such that $\lim_{m \to \infty}\tilde y(h_m) = L(y)$ and $\lim_{m\to\infty} \sum_{k=\ell}^n |\tilde a^k(h_m)| = 0.$  By hypothesis,  $ \lim_{\delta \to \infty} \omega(\delta, y) = L(y) > -\infty$ so $I_4$ cannot be empty.  
Since $\omega(\delta, y)$ is a nonincreasing function of $\delta$, $\omega(\delta, y) \geq L(y)$ for all $\delta$. Therefore, $L(y) \leq \sup \{ \tilde{y}(h) -  \delta \sum_{k=\ell}^{n} |\tilde{a}^{k}(h)| \, : \, h \in I_4\}$ for every $\delta$. Define $\bar I := \{ h \in  I_4 \, : \, \tilde y(h) < L(y) \}$ and $\bar\omega(\delta, y) = \sup \{ \tilde{y}(h) -  \delta \sum_{k=\ell}^{n} |\tilde{a}^{k}(h)| \, : \, h \in I_4 \setminus \bar I\}$.  We consider two cases.

{\em \underline{Case 1: $\lim_{\delta \to\infty}\bar\omega(\delta, y) = -\infty.$}} Since $\lim_{\delta\to\infty }\omega(\delta, y) = L(y) > -\infty$ and both $\omega(\delta, y)$ and $\bar\omega(\delta, y)$ are nonincreasing functions in $\delta$, there exists a $\bar \delta \geq 0$ such that $\omega(\delta, y) \geq L(y) \geq \bar\omega(\delta, y)+1$ for all $\delta \geq \bar\delta$. Therefore, for all $\delta \geq \bar\delta$, $\omega(\delta, y) = \sup\{ \tilde{y}(h) -  \delta \sum_{k=\ell}^{n} |\tilde{a}^{k}(h)| \, : \, h \in I_4\} \geq L(y) > L(y)-1 \geq \bar\omega(\delta, y) = \sup\{ \tilde{y}(h) -  \delta \sum_{k=\ell}^{n} |\tilde{a}^{k}(h)| \, : \, h \in I_4\setminus \bar I\}$. This  strict gap implies that we can drop all indices in $I_4\setminus \bar I$ and obtain $\omega(\delta, y) = \sup\{ \tilde{y}(h) -  \delta \sum_{k=\ell}^{n} |\tilde{a}^{k}(h)| \, : \, h \in \bar I\}$ for all $\delta \geq \bar\delta$.

For every $m \in\N$, set $\delta_m = \bar\delta+ m$. Since $\delta_m \geq \bar\delta$, 
\begin{align*}
L(y) \leq \omega(\delta_m) =\sup \{ \tilde{y}(h) -  \delta _m \sum_{k=\ell}^{n} |\tilde{a}^{k}(h)| \, : \, h \in \bar I\}=\sup \{ \tilde{y}(h) -  (\bar\delta + m) \sum_{k=\ell}^{n} |\tilde{a}^{k}(h)| \, : \, h \in \bar I\}
\end{align*} 
and thus, there exists $h_m \in \bar I$ such that $L(y) - \frac{1}{m} < \tilde y(h_m) - (\bar\delta +m) \sum_{k=\ell}^{n} |\tilde{a}^{k}(h_m)| \leq \tilde y(h_m) - m \sum_{k=\ell}^{n} |\tilde{a}^{k}(h_m)|$. Since $\tilde y(h) < L(y)$ for all $h \in \bar I$, we have $$\begin{array}{rl}& L(y)-\frac{1}{m} < L(y) - m\sum_{k=\ell}^{n} |\tilde{a}^{k}(h_m)| \\ \Rightarrow & \sum_{k=\ell}^{n} |\tilde{a}^{k}(h_m)| < \frac{1}{m^2}.\end{array}$$ 

This shows that $\lim_{m \to \infty} \sum_{k=\ell}^n| \tilde{a}^k(h_m)| = 0$ which in turn implies that $\lim_{m\to\infty} \tilde a^k(h_m) = 0$ for all $k = \ell, \ldots, n$.   By definition of $I_{4},$ $\sum_{j=\ell}^n |\tilde{a}^{k}(h_m)| > 0$ for all $h_m \in \bar I \subseteq I_4$ so we can assume the indices $h_m$ are all distinct.  Also,

\begin{align*}
\begin{array}{rl}& L(y)-\frac{1}{m} < \tilde y(h_m) - m \sum_{k=\ell}^{n} |\tilde{a}^{k}(h_m)|  \\ \Rightarrow & L(y) - \frac{1}{m} < \tilde y(h_m)\end{array}
\end{align*}

Since $\tilde y(h_m) < L(y)$ (because $h_m \in \bar I$), we get $ L(y) - \frac{1}{m} < \tilde y(h_m) < L(y)$. And so $\lim_{m\to\infty} \tilde y(h_m) = L(y)$.

{\em \underline{Case 2: $\lim_{\delta \to\infty}\bar\omega(\delta, y) > -\infty.$}} Since $\omega(\delta, y) \geq \bar\omega(\delta, y)$ for all $\delta\geq 0$ and $\lim_{\delta\to\infty}\omega(\delta, y) = L(y) < \infty$, we have $-\infty < \lim_{\delta \to\infty}\bar\omega(\delta, y) \leq L(y) < \infty$. First we show that there exists a sequence of indices $h_m \in I_4 \setminus \bar I$ such that $\tilde a^k(h_m) \to 0$ for all $k = \ell, \ldots, n$. This is achieved by showing that $\inf\{\sum_{k=\ell}^n| \tilde{a}^k(h)| : h \in I_4\setminus \bar I\} = 0$. Suppose to the contrary that $\inf\{\sum_{k=\ell}^n| \tilde{a}^k(h)| : h \in I_4\setminus \bar I\} = \beta > 0$. Since $\bar\omega(\delta, y)$ is nonincreasing and $\lim_{\delta \to \infty}\bar\omega(\delta, y) < \infty$, there exists $\bar \delta \geq 0$ such that $\bar\omega(\bar\delta,y) < \infty$. Observe that $\lim_{\delta \to \infty}\bar\omega(\delta, y) = \lim_{\delta \to\infty} \bar\omega(\bar\delta + \delta,y)$. Then, for every $\delta \geq 0$, 
\begin{align*}
\bar\omega(\bar\delta + \delta,y) &= \sup \{ \tilde{y}(h) -  (\bar\delta + \delta) \sum_{k=\ell}^{n} | \tilde{a}^{k}(h)| \, : \, h \in I_4 \setminus \bar I \} \\
&= \sup \{ \tilde{y}(h) -  \bar\delta  \sum_{k=\ell}^{n} |\tilde{a}^{k}(h)|  -\delta \sum_{k=\ell}^{n} |\tilde{a}^{k}(h)| \, : \, h \in I_4\setminus \bar I\} \\
&\le \sup \{ \tilde{y}(h) -  \bar\delta  \sum_{k=\ell}^{n} |\tilde{a}^{k}(h)|  -\delta \beta \, : \, h \in I_4\setminus \bar I\} \\
&= \sup \{ \tilde{y}(h) -  \bar\delta \sum_{k=\ell}^{n} |\tilde{a}^{k}(h)| \, : \, h \in I_4\setminus \bar I\} - \delta\beta\\
 &= \bar\omega(\bar \delta,y) - \delta\beta.
\end{align*}
Therefore, $-\infty < \lim_{\delta\to\infty} \bar\omega(\bar \delta + \delta,y) \leq \lim_{\delta \to \infty} (\bar\omega(\bar\delta,y) - \delta\beta) = -\infty$, since $\beta > 0$ and $\bar\omega(\bar\delta,y) < \infty$.  This is a contradiction. Thus $0 = \beta = \inf\{\sum_{k=\ell}^n| \tilde{a}^k(h)| : h \in I_4\setminus \bar I\}$. Since $\sum_{k=\ell}^n| \tilde{a}^k(h)| > 0$ for all $h \in I_4$, there is a sequence of distinct indices $h_m \in I_4\setminus \bar I$ such that $\lim_{m \to \infty} \sum_{k=\ell}^n| \tilde{a}^k(h_m)| = 0$, which in turn implies that $\lim_{m\to\infty} \tilde a^k(h_m) = 0$ for all $k = \ell, \ldots, n$.  

Now we show there is a subsequence of  $\tilde y(h_m)$ that converges to $L(y)$.  Since $\lim_{\delta \to \infty}\bar\omega(\delta, y) \leq L(y)$, there is a sequence $(\delta_p)_{p \in \N}$ such that $\delta_p \geq 0$ and $\bar\omega(\delta_p,y) < L(y) + \frac{1}{p}$ for all $p \in \N$.   It was shown above that the sequence $h_m \in I_4\setminus \bar I$ is such that $\lim_{m \to \infty} \sum_{k=\ell}^n |\tilde{a}^k(h_m)| = 0$.  This implies that for every $p \in \N$ there is an $m_{p} \in \N$ such that for all $m \ge m_{p},$ $\delta_{p} \sum_{k = \ell}^{n} | \tilde{a}^{k}(h_{m})| < \frac{1}{p}.$ Thus, one can extract a subsequence $(h_{m_p})_{p\in \N}$ of $(h_m)_{m\in \N}$ such that $\delta_{p} \sum_{k = \ell}^{n} | \tilde{a}^{k}(h_{m_p})| < \frac{1}{p}$ for all $p \in \N.$ Then 
\[
L(y) + \frac{1}{p} > \bar\omega(\delta_{p},y) = \sup \{\tilde{y}(h) - \delta_p \sum_{k=\ell}^{n} |\tilde{a}^k(h)| \, : \, h \in I_4\setminus \bar I \} \ge \tilde{y}(h_{m_{p}}) -  \delta_{p} \sum_{k = \ell}^{n} | \tilde{a}^{k}(h_{m_{p}})| > \tilde{y}(h_{m_{p}}) - \frac1p.
\]

Recall that $h_{m_{p}} \in I_{4} \backslash \overline{I}$ implies $\tilde{y}(h_{m_{p}}) \ge L(y)$, and therefore
$
L(y) + \frac{2}{p} > \tilde{y}(h_{m_{p}}) \ge L(y).
$
By replacing $\{h_m\}_{m \in \N}$ by the subsequence $\{h_{m_p}\}_{p \in \N}$, we get $\tilde y(h_{m_{p}})$ as the desired subsequence that converges to $L(y)$. 

Hence, there exists is a sequence $\{h_m\}_{m\in \N}$ be any sequence of indices in $I_4$    such that $\tilde y(h_m) \to L(y)$ as $m \to \infty$ and $\tilde a^k(h_m) \to 0$ as $m \to \infty$ for $k = \ell, \dots, n$.  Also, $\tilde a^k(h_m) = 0$ for all $k=1, \ldots, \ell-1$.

\end{proof}

Although Lemma~\ref{lem:seq-S} and its proof are very simple (they essentially follow from the definition of supremum), we include it in order to be symmetric with Lemma~\ref{lem:seq-L}. Both results are needed for Proposition~\ref{prop:ov-seq}.

\begin{lemma}\label{lem:seq-S}
Suppose $y \in \R^I$ and $\tilde y = \overline{FM}(y)$ with $I_{3} \neq \emptyset.$     If  $\{\tilde y(h_m) \}_{m\in \N}$ is any convergent sequence with indices $ h_{m} $ in $I_3,$    then $\lim_{m\to\infty} \tilde y(h_m) \leq S(y)$.  Furthermore, there exists a sequence of distinct indices $h_m$ in $I_3$ such that $\lim_{m \to \infty}\tilde y(h_m) = S(y)$ and  $\lim_{m\to \infty}\tilde a^k(h_m) = 0$ for $k = 1, \ldots, n$.   Also, if  the supremum that defines $S(y)$ is not attained, the sequence of indices can be taken to be distinct. 

\end{lemma}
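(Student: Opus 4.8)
\textbf{Proof plan for Lemma~\ref{lem:seq-S}.}

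The plan is to mirror the structure of Lemma~\ref{lem:seq-L}, but with the great simplification that for indices in $I_3$ there are no $\tilde a^k$ coefficients to control (the row in \eqref{eq:J_system} indexed by $I_3$ is simply $z \ge \tilde b(h)$), so no $\delta$-limiting argument is needed. For the first statement, suppose $\{\tilde y(h_m)\}_{m \in \N}$ is a convergent sequence with $h_m \in I_3$. By Definition~\ref{def:S-L-OV-def}, $\tilde y(h_m) \le \sup\{\tilde y(h) : h \in I_3\} = S(y)$ for every $m$, and passing to the limit gives $\lim_{m \to \infty} \tilde y(h_m) \le S(y)$. This is the ``essentially follows from the definition of supremum'' part.

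For the second statement, I would separate into two cases according to whether the supremum defining $S(y)$ is attained. If it is attained at some $h^* \in I_3$ with $\tilde y(h^*) = S(y)$, simply take the constant sequence $h_m = h^*$; then trivially $\tilde y(h_m) \to S(y)$, and $\tilde a^k(h^*) = 0$ for all $k = \ell, \dots, n$ because in \eqref{eq:J_system} the rows indexed by $I_3$ have the form $z \ge \tilde b(h)$ with no $x$-variables, i.e.\ $\tilde a^k(h) = 0$ identically on $I_3$; likewise $\tilde a^k(h_m) = 0$ for $k = 1, \dots, \ell - 1$ since those columns were eliminated. (Here one should note the sequence of indices need not be distinct in this case, which is consistent with the ``Also, if the supremum that defines $S(y)$ is not attained'' clause in the statement.) If the supremum is not attained, then by definition of supremum there is a sequence $h_m \in I_3$ with $\tilde y(h_m) \to S(y)$; since the value $S(y)$ is never achieved, we may pass to a subsequence of distinct indices (each value $\tilde y(h_m)$ strictly below $S(y)$, so infinitely many distinct indices must appear), and again $\tilde a^k(h_m) = 0$ for all $k = 1, \dots, n$ and all $m$ by the structure of the $I_3$ rows.

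There is essentially no obstacle here; the only point requiring a word of care is justifying that one can extract \emph{distinct} indices when the supremum is not attained — this follows because if only finitely many distinct indices appeared in a sequence approaching $S(y)$, one of them would be repeated infinitely often and would therefore attain $S(y)$, contradicting non-attainment. The fact that $\lim_{m\to\infty} \tilde a^k(h_m) = 0$ (indeed $\tilde a^k(h_m) = 0$ exactly) for $k = 1, \dots, n$ is immediate from the shape of \eqref{eq:J_system} restricted to $I_3$ and does not require any of the limiting machinery used in Lemma~\ref{lem:seq-L}. This cleanly sets up the symmetric use of Lemmas~\ref{lem:seq-L} and~\ref{lem:seq-S} in Proposition~\ref{prop:ov-seq}.
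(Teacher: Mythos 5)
Your proposal is correct and follows essentially the same route as the paper's own proof: bound by the supremum for the first claim, take a sequence approaching $S(y)$ (constant if attained, distinct otherwise) for the second, and observe $\tilde a^k(h) = 0$ identically on $I_3$ from the structure of \eqref{eq:J_system}. Your explicit justification that distinct indices can be extracted in the non-attained case (a repeated index would attain the supremum) is a small point the paper leaves implicit, but it is not a different approach.
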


\begin{proof}
 By definition of supremum there exists a sequence $\{h_m\}_{m \in \N} \subseteq{I_3}$ such that $\tilde y(h_m) \to S(y)$ as $m \to \infty$. If the supremum that defines $S(y)$ is attained by $\tilde y(h_0) = S(y)$ then take $h_m = h_0$ for all $m \in \N$. Otherwise, the elements $h_m$ are taken to be distinct.   By definition of $I_{3},$   $\tilde a^k(h_m) = 0$ for $k = 1, \dots, n$  and   for all $m\in \N$ and so  $\lim_{m\to\infty} \sum_{k=\ell}^n |\tilde a^k(h_m)| = 0.$
 
 It also follows from the definition of supremum that if  $\{\tilde y(h_m) \}_{m\in \N}$ is any convergent sequence with indices $ h_{m} $ in $I_3,$    then $\lim_{m\to\infty} \tilde y(h_m) \leq S(y)$.
\end{proof}

\begin{prop}\label{prop:ov-seq}
Suppose $y \in \R^I$, $\tilde y = \overline{FM}(y)$ and $OV(y)$ is finite.  Then there exists a sequence of indices (not necessarily distinct) $h_m$ in $H$ such that $\lim_{m \to \infty}\tilde y(h_m) = OV(y)$ and  $\lim_{m\to \infty}\tilde a^k(h_m) = 0$ for $k = 1, \ldots, n$.   The sequence is contained entirely in $I_3$ or $I_4$. Moreover, either if $L(y) > S(y)$, or when $L(y) \le S(y)$ and the supremum that defines $S(y)$ is not attained, the sequence of indices can be taken to be distinct. 
\end{prop}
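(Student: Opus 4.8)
The plan is to combine Theorem~\ref{theorem:fm-primal-value} with Lemmas~\ref{lem:seq-L} and~\ref{lem:seq-S}, splitting into the two cases that determine which of $S(y)$ or $L(y)$ realizes $OV(y)$. Since $OV(y)$ is finite, Theorem~\ref{theorem:fm-primal-value} gives $OV(y) = \max\{S(y), L(y)\}$, and both $S(y)$ and $L(y)$ are at most $OV(y) < \infty$ (they may a priori be $-\infty$). First I would handle the case $L(y) > S(y)$. Then $OV(y) = L(y)$ is finite, so the second part of Lemma~\ref{lem:seq-L} applies directly: it produces a sequence of \emph{distinct} indices $h_m \in I_4$ with $\tilde y(h_m) \to L(y) = OV(y)$ and $\tilde a^k(h_m) \to 0$ for all $k$ (the lemma gives this for $k = \ell, \dots, n$, and for $k = 1, \dots, \ell-1$ we have $\tilde a^k(h_m) = 0$ identically by the structure of the $I_4$ block in~\eqref{eq:J_system}). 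This sequence lies entirely in $I_4$, and distinctness is exactly as claimed.

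The second case is $L(y) \le S(y)$, so $OV(y) = S(y)$ is finite; in particular $I_3 \neq \emptyset$. Here I would invoke Lemma~\ref{lem:seq-S}: it yields a sequence $h_m \in I_3$ with $\tilde y(h_m) \to S(y) = OV(y)$ and $\tilde a^k(h_m) = 0$ for $k = 1, \dots, n$ (again using the defining property of $I_3$, where all the $\tilde a^k$ coefficients vanish). This sequence lies entirely in $I_3$. The distinctness claim in this case is conditional: Lemma~\ref{lem:seq-S} guarantees the indices can be taken distinct precisely when the supremum defining $S(y)$ is not attained, which matches the statement. If the supremum is attained, we simply repeat a single maximizing index, which is why the proposition only promises ``not necessarily distinct'' indices in general.

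Assembling the two cases gives the full statement: in either case the sequence is contained entirely in one of $I_3$ or $I_4$, satisfies $\tilde y(h_m) \to OV(y)$ and $\tilde a^k(h_m) \to 0$ for all $k = 1, \dots, n$, and can be taken with distinct indices exactly when $L(y) > S(y)$ or when $L(y) \le S(y)$ with the $S(y)$-supremum unattained. I do not anticipate a serious obstacle here — the real work was already done in Lemmas~\ref{lem:seq-L} and~\ref{lem:seq-S}; the only point requiring a little care is correctly reading off $\tilde a^k(h_m) = 0$ for the ``eliminated'' indices $k = 1, \dots, \ell-1$ from the form of system~\eqref{eq:J_system}, and being careful that ``finite $OV(y)$'' does not by itself force both $S(y)$ and $L(y)$ to be finite — only the maximizing one.
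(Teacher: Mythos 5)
Your proposal is correct and follows essentially the same route as the paper: the paper's proof is exactly the observation that $OV(y) = \max\{S(y), L(y)\}$ via Theorem~\ref{theorem:fm-primal-value}, after which the two cases are dispatched by Lemmas~\ref{lem:seq-L} and~\ref{lem:seq-S}. Your additional care about the vanishing coefficients $\tilde a^k(h_m)$ for $k = 1, \dots, \ell-1$ and about which of $S(y)$, $L(y)$ must be finite is consistent with what those lemmas already provide.
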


\begin{proof}
By Theorem~\ref{theorem:fm-primal-value},  $OV(y) = \max\{S(y), L(y)\}.$  The result is now immediate from Lemmas~\ref{lem:seq-L} and~\ref{lem:seq-S}.

\end{proof}

%%%%%%%%%%%%%%%%%%%%
%%%%%%%%%%%%%%%%%%%%
% strong-duality-dual-pricing.tex
%%%%%%%%%%%%%%%%%%%%
%%%%%%%%%%%%%%%%%%%%

\section{Strong duality and dual pricing for a restricted constraint space}\label{s:strong-duality-dual-pricing}

Duality results for SILPs depend crucially on the choice of the constraint space $Y.$ In this section we work with the constraint space $Y = U$ where $U$ is defined in~\eqref{eq:define-U}.  Recall that the vector space $U$ is the minimum vector space of interest  since every legitimate dual problem~\eqref{eq:DSILPprime}  requires the linear functionals defined  on $Y$ to operate on $a^1, \dots, a^n, b.$ 
We show that when  $Y = U = \text{span}(a^1, \dots, a^n, b)$,  (SD) and (DP) hold.  In particular, we explicitly construct a linear functional $\psi^* \in U_{+}^{\prime}$ such that~\eqref{eq:strong-duality} and~\eqref{eq:dual-pricing} hold.

\begin{theorem}\label{theorem:silps-never-have-a-duality-gap}
Consider an instance of \eqref{eq:SILP} that is feasible and bounded. Then, the dual problem $(\text{DSILP}(U))$ with $U = \text{span}(a^1, \dots, a^n, b)$ is solvable and (SD) holds for the dual pair \eqref{eq:SILP}--$(\text{DSILP}(U)).$  Moreover, $(\text{DSILP}(U))$ has a unique optimal dual solution.
\end{theorem}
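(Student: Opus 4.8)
The goal is to produce an explicit $\psi^* \in U'_+$ that is feasible for $(\text{DSILP}(U))$ with $\psi^*(b) = OV(b)$, and then argue uniqueness. The natural strategy is to build $\psi^*$ by working in the projected space $\R^H$ produced by Fourier--Motzkin elimination, exploiting Theorem~\ref{theorem:fm-primal-value} and Proposition~\ref{prop:ov-seq}. By Proposition~\ref{prop:ov-seq} there is a sequence of indices $h_m$ (entirely in $I_3$ or entirely in $I_4$) with $\tilde b(h_m) = \overline{FM}(b)(h_m) \to OV(b)$ and $\tilde a^k(h_m) \to 0$ for all $k$. The idea is to define, on the three-dimensional (at most) space $\spn(\tilde a^1,\dots,\tilde a^n,\tilde b)$ inside $\R^H$, a functional that reads off this limiting behavior, i.e. something like $\phi(v) = \lim_{m\to\infty} v(h_m)$ restricted to that finite-dimensional subspace (so the limit exists by linearity once it exists on the spanning vectors). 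On the generators this gives $\phi(\tilde a^k) = 0 = c_k - c_k$... more precisely I need to be careful: the objective row contributes the $z$-coordinate, so I should work with $FM$ on $\R^{\{0\}\cup I}$ rather than $\overline{FM}$, using Lemma~\ref{lemma:cute-little-trick} to handle the constant shift on $I_3\cup I_4$. Then $\psi^* := \phi \circ \overline{FM}$ (suitably interpreted) is the candidate dual functional on $U$.

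**Key steps.** First, set up the finite-dimensional subspace $V := \spn(\tilde a^1,\dots,\tilde a^n,\tilde b) \subseteq \R^H$ and define $\phi$ on $V$ via the limit along $(h_m)$; check this is well-defined (the limit of a finite linear combination of convergent sequences converges) and linear. Second, verify positivity: since each $u^h \ge 0$ and the constraints of type $I_3, I_4$ in~\eqref{eq:J_system} encode valid consequences, one shows that if $y \in U \cap \R^I_+$ then $\overline{FM}(y) \ge 0$ (because $\overline{FM}$ is a positive operator), hence $\phi(\overline{FM}(y)) = \lim \overline{FM}(y)(h_m) \ge 0$. Third, check the equality constraints: using Lemma~\ref{lemma:cute-little-trick} with $r = -c_k$, one gets $FM((-c_k,a^k))(h) = -c_k + \overline{FM}(a^k)(h)$ on $I_3\cup I_4$, and since $FM((-c_k,a^k)) = \tilde a^k \to 0$ along $(h_m)$, it follows that $\overline{FM}(a^k)(h_m) \to c_k$, so $\psi^*(a^k) = c_k$. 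Fourth, evaluate at $b$: by construction $\overline{FM}(b)(h_m) = \tilde b(h_m) \to OV(b)$ by the choice of sequence, so $\psi^*(b) = OV(b)$; weak duality (the optimal value of~\eqref{eq:FDSILP} bounds that of any valid dual from below, combined with $OV(b) \ge$ dual value always — actually here one needs that $OV(b)$ is an upper bound for $(\text{DSILP}(U))$, which is just weak duality between primal and dual) then shows $\psi^*$ is optimal. To extend $\phi$ from $V$ to all of $U$ one does not even need Hahn--Banach since $\overline{FM}$ maps $U$ into $V$; but to be safe one can precompose: define $\psi^*(y) := \lim_m \overline{FM}(y)(h_m)$ for $y \in U$, and the above shows the limit exists on a spanning set, hence on $U$.

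**Uniqueness.** For the uniqueness claim, suppose $\psi_1, \psi_2 \in U'_+$ are both optimal. Since $\dim U \le n+1$ and they agree on $a^1,\dots,a^n$ (both equal $c_k$) and on $b$ (both equal $OV(b)$), they agree on all of $U = \spn(a^1,\dots,a^n,b)$ — this is immediate linear algebra. The only subtlety is if $a^1,\dots,a^n,b$ are linearly dependent, in which case the value on the dependent combination is forced anyway, so agreement on the generators still forces $\psi_1 = \psi_2$. Thus uniqueness is essentially automatic once existence and the constraint equations are established.

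**Main obstacle.** I expect the delicate point to be the positivity verification and, relatedly, making sure the limit-functional $\phi$ is genuinely well-defined as a \emph{linear functional} on $V$ rather than merely defined on the generators — i.e. confirming that whenever a finite linear combination $\sum \lambda_k \tilde a^k + \mu \tilde b$ equals zero in $\R^H$, the corresponding combination of limits is zero (which it must be, since the sequences themselves satisfy the identity pointwise, so their limits do). A second subtle point is handling the dichotomy in Proposition~\ref{prop:ov-seq}: whether the witnessing sequence lies in $I_3$ (where the $z$-coefficient is $1$ and there is no $\tilde a^k$ term at all, so things are cleaner) or in $I_4$; the argument via Lemma~\ref{lemma:cute-little-trick} covers both uniformly, but one must state it so. Finally, one should double check that $\phi$ restricted through $\overline{FM}$ lands correctly, i.e. that $\overline{FM}$ restricted to $U$ has image inside $V$ (true, since $\overline{FM}(a^k)$ and $\overline{FM}(b)$ differ from $\tilde a^k, \tilde b$ only by a constant multiple of the all-ones-on-$(I_3\cup I_4)$ vector, which one must include in $V$ or otherwise account for — this is exactly where Lemma~\ref{lemma:cute-little-trick} is indispensable).
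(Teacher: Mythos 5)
Your proposal is correct and follows essentially the same route as the paper: extract the sequence $(h_m)$ from Proposition~\ref{prop:ov-seq}, define the dual functional as the limit along $(h_m)$ composed with $\overline{FM}$ (the paper defines this limit functional $\lambda$ on the subspace $M$ of sequences convergent along $(h_m)$ and takes $\psi^*=\overline{FM}'(\lambda)$, which is exactly your $\psi^*(y)=\lim_m\overline{FM}(y)(h_m)$), verify feasibility via Lemma~\ref{lemma:cute-little-trick}, positivity via positivity of $\overline{FM}$, optimality via weak duality, and uniqueness because the generators span $U$. The subtleties you flag (well-definedness of the limit on a spanning set, the constant shift between $FM((-c_k,a^k))$ and $\overline{FM}(a^k)$) are handled the same way in the paper.
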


\begin{proof}

Since \eqref{eq:SILP} is feasible and bounded, we  apply Proposition~\ref{prop:ov-seq} with $y =b$ and extract a subset of indices $\{h_{m}\}_{m \in \N}$  of $H$ satisfying $\tilde b(h_m) \to OV(b)$ as $m \to \infty$ and $\tilde a^k(h_m) \to 0$ as $m \to \infty$ for $k = 1, \dots, n$. 
\vskip 5pt 

By Lemma~\ref{lemma:cute-little-trick}, for all $k=1,\ldots, n$, $\overline{FM}(a^k)(h_m) = FM((-c_k, a^k))(h_m) +c_k$ and therefore $\lim_{m\to\infty} \overline{FM}(a^k)(h_m) = \lim_{m\to\infty}FM((-c_k, a^k))(h_m) +c_k = \lim_{m\to\infty}\tilde a^k(h_m) +c_k = c_k$. Also, $\lim_{m\to\infty}\overline{FM}(b) = \lim_{m\to\infty}FM((0,b)) = \lim_{m\to\infty} \tilde b(h_m) = OV(b)$.  Therefore  $\overline{FM}(a^1), \ldots \overline{FM}(a^k),$  $\overline{FM}(b)$ all lie in the subspace $M \subseteq \R^{H}$ defined by
\begin{eqnarray}
M :=  \{  \tilde{y} \in \R^H \, :\, \tilde{y}(h_m)_{m \in \N} \,\, \text{ converges }   \}. \label{eq:define-M}
\end{eqnarray}

Define a  positive  linear functional $\lambda$ on $M$ by
\begin{align}\label{eq:projected-system-linear-functional}
\lambda(\tilde y) = \lim_{m \to \infty} \tilde{y}(h_m).
\end{align}

Since $\overline{FM}(a^1), \ldots \overline{FM}(a^k), \overline{FM}(b) \in M$ we have $\overline{FM}(U) \subseteq M$ and so $\lambda$ is defined on $\overline{FM}(U)$. Now map $\lambda$ to a linear functional in $U'$ through the adjoint mapping $\overline{FM}'$. Let $\psi^{*} = \overline{FM}'(\lambda).$  
We verify that $\psi^{*}$  is  an optimal solution to $(\text{DSILP}(Y))$ with objective value $OV(b)$. 

It follows from the definition of  $\lambda$ in~\eqref{eq:projected-system-linear-functional} that $\lambda$ is a positive linear functional.   Since $\overline{FM}$ is a positive operator, $\psi^{*}  = \overline{FM}'(\lambda) = \lambda\circ \overline{FM}$ is a positive linear functional on $U$.  We now check that $\psi^{*}$ is dual feasible. We showed above that $\lambda(\overline{FM}(a^{k}))  = c_{k} $ for all $k = 1, \ldots, n.$  Then by definition of adjoint
\begin{eqnarray*}
\langle a^k, \psi^{*} \rangle = \langle  a^k, \overline{FM}'(\lambda)\rangle \
      = \langle \overline{FM}(a^k), \lambda\rangle 
      = c_{k}.
\end{eqnarray*}

By a similar argument, $\langle  b, \psi^{*} \rangle = \langle  \overline{FM}(b), \lambda \rangle =   OV(b)$  so $\psi^{*}$ is both feasible and optimal.    Note that $\psi^{*}$ is the {\it unique} optimal dual solution since $U$ is the span of  $a^{1}, \ldots, a^{n}$ and $b$ and defining the value of $\psi^{*}$  for each of these vectors uniquely determines an optimal dual solution. This completes the proof.\end{proof}

\begin{remark}\label{rem:constrast-with-kortanek}
The above theorem can be contrasted with the results of Charnes at el. \cite{charnes1965representations} who proved that is always possible to reformulate \eqref{eq:SILP} to ensure zero duality gap with the finite support dual program. Our approach works with the original formulation of \eqref{eq:SILP} and thus preserves dual information in reference to the original system of constraints rather than a reformulation. Indeed, our procedure considers an alternate \emph{dual} rather than the finite support dual.  
\end{remark}

\begin{theorem}\label{theorem:U-dual-pricing}
Consider an instance of \eqref{eq:SILP} that is feasible and bounded. Then the unique optimal dual solution $\psi^{*}$ constructed in Theorem~\ref{theorem:silps-never-have-a-duality-gap} satisfies \eqref{eq:dual-pricing} for all perturbations $d \in U.$ 
\end{theorem}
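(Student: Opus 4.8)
The plan is to reduce the claim to an explicit formula for $OV(b+\epsilon d)$, exploiting the fact that perturbing the right-hand side in a direction $d$ lying in $U=\spn(a^1,\dots,a^n,b)$ can be undone by an affine change of the primal variables. Write $d=\mu_0 b+\sum_{k=1}^n\mu_k a^k$ with $\mu_0,\dots,\mu_n\in\R$ and set $\mu=(\mu_1,\dots,\mu_n)\in\R^n$, so that $b+\epsilon d=(1+\epsilon\mu_0)b+\epsilon\sum_{k=1}^n\mu_k a^k$. The whole argument is then bookkeeping: compute $OV(b+\epsilon d)$ on the primal side and $\psi^*(b+\epsilon d)$ on the dual side and check they agree.

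For the primal side I would first pick $\hat\epsilon>0$ small enough that $1+\epsilon\mu_0>0$ for all $\epsilon\in[0,\hat\epsilon]$ (any $\hat\epsilon$ works if $\mu_0\ge 0$; take $\hat\epsilon=-1/(2\mu_0)$ if $\mu_0<0$). For such $\epsilon$ the substitution $x=(1+\epsilon\mu_0)y+\epsilon\mu$ is an affine bijection of $\R^n$ that carries $\{x:Ax\ge b+\epsilon d\}$ exactly onto $\{y:Ay\ge b\}$ — here one uses $A\mu=\sum_{k=1}^n\mu_k a^k$, and that multiplying the inequality $Ay\ge b$ through by the positive scalar $1+\epsilon\mu_0$ leaves it unchanged — and it transforms the objective as $\sum_k c_k x_k=(1+\epsilon\mu_0)\sum_k c_k y_k+\epsilon\sum_k c_k\mu_k$. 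Passing to infima gives $OV(b+\epsilon d)=(1+\epsilon\mu_0)\,OV(b)+\epsilon\sum_k c_k\mu_k=OV(b)+\epsilon\bigl(\mu_0\,OV(b)+\sum_k c_k\mu_k\bigr)$; in particular $b+\epsilon d$ is feasible and bounded because $OV(b)$ is finite.

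For the dual side I would evaluate $\psi^*$ on $b+\epsilon d$ using only linearity and dual feasibility of the base dual solution: $\psi^*(b+\epsilon d)=\psi^*(b)+\epsilon\bigl(\mu_0\psi^*(b)+\sum_k\mu_k\psi^*(a^k)\bigr)$, and then substitute $\psi^*(b)=OV(b)$ and $\psi^*(a^k)=c_k$ from Theorem~\ref{theorem:silps-never-have-a-duality-gap}. Since $\psi^*(d)=\mu_0\,OV(b)+\sum_k c_k\mu_k$, this gives $\psi^*(b+\epsilon d)=OV(b)+\epsilon\psi^*(d)$, which coincides with the primal-side formula, so $OV(b+\epsilon d)=\psi^*(b+\epsilon d)=OV(b)+\epsilon\psi^*(d)$ for all $\epsilon\in[0,\hat\epsilon]$ — exactly \eqref{eq:dual-pricing}. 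The only delicate point is the sign of $1+\epsilon\mu_0$: it is what forces the restriction to a small interval when $\mu_0<0$, and it is also precisely what makes the change of variables preserve rather than reverse the constraint inequalities; beyond that the proof is routine. As a byproduct one may note that feasibility of $b+d$ is not actually needed — it enters only because (DP) is phrased that way — since the change of variables shows $b+\epsilon d$ is feasible exactly when $b$ is, for every $\epsilon$ with $1+\epsilon\mu_0>0$.
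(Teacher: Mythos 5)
Your proof is correct, and it follows the same basic idea as the paper's --- decompose $d=\mu_0 b+\sum_k\mu_k a^k$ and exploit the resulting affine relationship between the feasible regions for right-hand sides $b$ and $b+\epsilon d$ --- but the execution differs in ways worth noting. The paper constructs the candidate primal point $\hat x_k=(1+\alpha_0)x_k(b)+\alpha_k$ (your change of variables evaluated at an optimal $y=x(b)$), computes $\psi^*(b+d)$ through the limit functional $\lambda$ on the Fourier--Motzkin projected system, and then invokes weak duality; it must therefore treat the non-solvable primal separately via a sequence of near-optimal solutions. Your affine bijection $x=(1+\epsilon\mu_0)y+\epsilon\mu$ yields the exact identity $OV(b+\epsilon d)=(1+\epsilon\mu_0)OV(b)+\epsilon\sum_k c_k\mu_k$ directly at the level of infima, so no case split on primal solvability is needed, and the dual side is pure linearity of $\psi^*$ together with $\psi^*(a^k)=c_k$ and $\psi^*(b)=OV(b)$. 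Your version is also more careful on one point the paper glosses over: the paper works with $\epsilon=1$ and implicitly assumes $1+\alpha_0\ge 0$ when arguing that $\hat x$ is feasible (the inequality $(1+\alpha_0)\sum_k a^k(i)x_k(b)\ge(1+\alpha_0)b(i)$ reverses otherwise), whereas you correctly identify that the sign of $1+\epsilon\mu_0$ is exactly what forces the restriction to a small interval $[0,\hat\epsilon]$ when $\mu_0<0$ --- which is all that \eqref{eq:dual-pricing} requires. (The non-uniqueness of the decomposition of $d$ when $a^1,\dots,a^n,b$ are dependent is harmless: any fixed choice of coefficients works on the primal side, and the dual side depends only on $d$ itself.)
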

\begin{proof}
By  hypothesis \eqref{eq:SILP} is feasible and bounded. Then by Theorem~\ref{theorem:silps-never-have-a-duality-gap}     there is an optimal dual solution $\psi^{*}$ such that $\psi^{*}(b) = OV(b).$  For now assume \eqref{eq:SILP} is also solvable with optimal solution $x(b).$ We relax this assumption later.

We show  {\it for every} perturbation $d \in U$ that $\psi^{*}$ is an optimal dual solution.    If  $d \in U$  then $d = \sum_{k=1}^n \alpha_k a^k + \alpha_0 b.$  Following the logic of Theorem~\ref{theorem:silps-never-have-a-duality-gap}, there exists a subsequence $\left\{h_m\right\}$ in $I_3$ or $I_4$ such that  $\tilde{a}^{k}(h_{m}) \rightarrow 0$  for $k = 1, \ldots, n$ and  $\tilde{b}(h_{m}) \rightarrow OV(b).$ Since a linear combination of convergent sequences is a convergent sequence the  linear functional $\lambda$ defined in~\eqref{eq:projected-system-linear-functional} is well defined   for   $\overline{FM}(U),$  and in particular for  $\overline{FM}(b + d).$ For the projected system~\eqref{eq:J_system},  $\lambda$ is dual feasible and gives objective function value 
\begin{eqnarray*}
\psi^{*}(b + d) =  \lambda(\overline{FM}(b+d)) =  (1 + \alpha_{0})OV(b)  + \sum_{k = 1}^{n} \alpha_{k} c_{k}. 
\end{eqnarray*}
A primal feasible solution to    \eqref{eq:SILP} with right-hand-side $b + d$  is   $\hat x_k = (1 +  \alpha_0) x_k(b) +  \alpha_k, \,\text{ for } k = 1, \dots, n$ and this primal solution gives objective function value $(1 + \alpha_{0})OV(b)  + \sum_{k = 1}^{n} \alpha_{k} c_{k}$. By weak duality $\psi^{*}$ remains the optimal dual solution for right-hand-side $b + d.$

Now consider the case where  \eqref{eq:SILP}  is not solvable.  In this case  the optimal  primal objective value is attained as a supremum.  In this case there is a  sequence $\{x^{m}(b)\}$     of primal feasible solutions whose objective function values converges  $OV(b).$

Now construct a sequence of feasible solutions $\{\hat{x}^{m}(b)\}$  using the  definition  of  $\hat x$ above. Then a very similar reasoning to the above shows that the sequence   $\{\hat{x}^{m}(b)\}$   converges to the value  $\psi^{*}(b + d).$  Again, by  weak duality $\psi^{*}$ remains the optimal dual solution for right-hand-side $b + d.$
\end{proof}

$(\text{DSILP}(U))$ is a very special dual. If there exists  a $b$ for which \eqref{eq:SILP}    is feasible and bounded, then  there is an optimal dual solution $\psi^{*}$  to   $(\text{DSILP}(U))$  such that
\begin{eqnarray*}
OV(b + d) = OV(b) + \psi^{*}(d)
\end{eqnarray*}
{\it for every} $d \in U.$ 
This is a much stronger result than (DP) since the same  linear functional $\psi^{*}$ is  valid for every perturbation $d.$  A natural question is when  the weaker property (DP) holds in  spaces that strictly contain $U$.  The problem of allowing perturbations  $d \not\in U$ is that  $\overline{FM}(d)$  may not lie in the subspace $M$ defined by~\eqref{eq:define-M} and therefore the  $\lambda$ defined in~\eqref{eq:projected-system-linear-functional}  is not defined for $\overline{FM}(d).$    Then we cannot use the adjoint operator $\overline{FM}'$  to get $\psi^{*}(d).$  This motivates the  development of the next section where we want to find the largest possible perturbation space so that (SD) and (DP) hold.

%%%%%%%%%%%%%%%%%%%%
%%%%%%%%%%%%%%%%%%%%
% extending-U.tex
%%%%%%%%%%%%%%%%%%%%
%%%%%%%%%%%%%%%%%%%%

\section{Extending strong duality and dual pricing to larger constraint spaces}\label{s:extending-U}

 The goal of this section is to prove (SD) and (DP) for subspaces   $Y \subseteq \R^{I}$ that extend $U$.     In Proposition~\ref{prop:duality-gap-iff-lift-base-functional} below we prove that the primal-dual pair \eqref{eq:SILP}--\eqref{eq:DSILPprime} satisfy (SD) if and only if the base dual solution $\psi^*$ constructed in Theorem~\ref{theorem:silps-never-have-a-duality-gap} can be extended to a positive linear functional over $Y$. 

\begin{prop}\label{prop:duality-gap-iff-lift-base-functional}
Consider an instance of \eqref{eq:SILP} that is feasible and bounded and $Y$ a subspace of $\R^I$ that contains $U$ as a subspace. Then dual pair \eqref{eq:SILP}--\eqref{eq:DSILPprime} satisfies (SD) if and only if the base dual solution $\psi^*$ defined in \eqref{eq:strong-duality} can be extended to a positive linear functional over $Y$. 
\end{prop}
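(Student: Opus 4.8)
The statement is an "if and only if," and the two directions have very different flavors. The easy direction is the forward one: if (SD) holds for the pair \eqref{eq:SILP}--\eqref{eq:DSILPprime} with constraint space $Y$, then by definition there is some $\psi^\dagger \in Y'_+$ with $\psi^\dagger(a^k) = c_k$ for all $k$ and $\psi^\dagger(b) = OV(b)$. Restricting $\psi^\dagger$ to the subspace $U \subseteq Y$ gives a positive linear functional on $U$ that is feasible for $(\text{DSILP}(U))$ with objective value $OV(b)$. By Theorem~\ref{theorem:silps-never-have-a-duality-gap}, $(\text{DSILP}(U))$ has a \emph{unique} optimal dual solution, namely the base dual solution $\psi^*$; and since $OV(b)$ is the optimal value of $(\text{DSILP}(U))$ (strong duality holds there), $\psi^\dagger|_U$ must equal $\psi^*$. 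Hence $\psi^\dagger$ is precisely an extension of $\psi^*$ to a positive linear functional on $Y$, which is what we wanted.

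For the reverse direction, suppose $\psi^*$ extends to some $\bar\psi \in Y'$ that is positive on $Y$, i.e. $\bar\psi \in Y'_+$. Then $\bar\psi$ agrees with $\psi^*$ on $U$, so in particular $\bar\psi(a^k) = \psi^*(a^k) = c_k$ for all $k$ and $\bar\psi(b) = \psi^*(b) = OV(b)$. Thus $\bar\psi$ is feasible for \eqref{eq:DSILPprime} and attains dual objective value $OV(b)$. Since weak duality always holds between \eqref{eq:SILP} and \eqref{eq:DSILPprime} (finite-support-or-larger dual functionals cannot exceed the primal optimum), $\bar\psi$ is an optimal dual solution and $OV(b)$ equals the dual optimal value, establishing (SD). One should double-check that weak duality for the pair \eqref{eq:SILP}--\eqref{eq:DSILPprime} is indeed available: for $\psi \in Y'_+$ feasible, $\psi(b) \le \psi(Ax) = \sum_k c_k x_k$ for any primal feasible $x$, using positivity of $\psi$ together with $Ax - b \in Y \cap \R^I_+$; taking the infimum over primal feasible $x$ gives $\psi(b) \le OV(b)$.

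The main (and really only) subtlety is the uniqueness step in the forward direction: I am using that $\psi^\dagger|_U$ must coincide with $\psi^*$, which relies on $\psi^\dagger|_U$ being not merely dual-feasible but dual-\emph{optimal} for $(\text{DSILP}(U))$ — and that is exactly what $\psi^\dagger(b) = OV(b)$ together with weak duality for $(\text{DSILP}(U))$ gives. Everything else is bookkeeping about restriction and extension of linear functionals. I would present the proof as two short paragraphs, one per direction, flagging that positivity is preserved under restriction to a subspace (trivially, since $U \cap \R^I_+ \subseteq Y \cap \R^I_+$) and invoking Theorem~\ref{theorem:silps-never-have-a-duality-gap} for uniqueness.
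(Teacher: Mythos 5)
Your proof is correct and follows essentially the same route as the paper: restriction of an (SD)-witnessing functional to $U$ must coincide with the base dual solution (the paper notes this directly from agreement on the spanning set $a^1,\dots,a^n,b$, which is the same fact underlying the uniqueness claim in Theorem~\ref{theorem:silps-never-have-a-duality-gap} that you invoke), and conversely any positive extension is feasible with value $OV(b)$. Your explicit weak-duality check is a harmless addition that the paper leaves implicit.
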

\begin{proof}
If $\psi$ is an optimal dual solution it must be feasible and thus $\psi(a^k) = c_k$ for $k = 1, \dots, n$ and $\psi(b) = OV(b)$. In other words, $\psi(y) = \psi^*(y)$ for $y \in U$. Thus, $\psi$ is a positive linear extension of $\psi^*$. Conversely, every positive linear extension $\psi$ of $\psi^*$ is dual feasible and satisfies $\psi(b) = OV(b)$. This is because any extension maintains the values of $\psi^*$ when restricted to $U$. 
\end{proof}
Moreover, we have the following ``monotonicity'' property of (SD) and (DP).

\begin{prop}\label{prop:mononticity}
Let $Y$ a subspace of $\R^I$ that contains $U$ as a subspace. Then 
\begin{enumerate}

\item  if the primal-dual pair \eqref{eq:SILP}--\eqref{eq:DSILPprime} satisfies (SD), then (SD) holds for every primal dual pair \eqref{eq:SILP}--$(\text{DSILP}(Q))$ where $Q$ is a subspace of $Y$ that contains $U$. 

\item if the primal-dual pair \eqref{eq:SILP}--\eqref{eq:DSILPprime} satisfies (DP), then (DP) holds for every primal dual pair \eqref{eq:SILP}--$(\text{DSILP}(Q))$ where $Q$ is a subspace of $Y$ that contains $U$.   
\end{enumerate}

\end{prop}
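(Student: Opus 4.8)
The plan is to exploit the fact that a valid dual solution for a smaller constraint space is obtained by restricting a valid dual solution from a larger one. Concretely, since $U \subseteq Q \subseteq Y$, any linear functional $\psi \in Y'$ restricts to a linear functional $\psi|_Q \in Q'$, and positivity is inherited: if $\psi(y) \ge 0$ for all $y \in Y \cap \R^I_+$, then in particular $\psi(y) \ge 0$ for all $y \in Q \cap \R^I_+$, so $\psi|_Q \in Q'_+$. Moreover restriction does not disturb the defining data, since $a^1,\dots,a^n,b \in U \subseteq Q$, so $\psi|_Q(a^k) = \psi(a^k)$ and $\psi|_Q(b) = \psi(b)$.

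For part 1, suppose $(\text{SILP})$--$(\text{DSILP}(Y))$ satisfies (SD). By Proposition~\ref{prop:duality-gap-iff-lift-base-functional}, this means the base dual solution $\psi^*$ (a positive linear functional on $U$) extends to a positive linear functional $\psi$ on $Y$. But then $\psi|_Q$ is a positive linear functional on $Q$ that still extends $\psi^*$ (as $U \subseteq Q$), so again by Proposition~\ref{prop:duality-gap-iff-lift-base-functional} applied with constraint space $Q$, the pair $(\text{SILP})$--$(\text{DSILP}(Q))$ satisfies (SD). This is clean and essentially immediate once Proposition~\ref{prop:duality-gap-iff-lift-base-functional} is invoked.

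For part 2, suppose $(\text{SILP})$--$(\text{DSILP}(Y))$ satisfies (DP). Fix a perturbation direction $d \in Q$ such that $(\text{SILP})$ is feasible for right-hand-side $b + d$. Since $Q \subseteq Y$, $d$ is also a valid perturbation direction for the constraint space $Y$, so (DP) for $Y$ supplies an optimal dual solution $\psi^* \in Y'_+$ (optimal for $(\text{DSILP}(Y))$) and an $\hat\epsilon > 0$ such that $OV(b + \epsilon d) = \psi^*(b + \epsilon d) = OV(b) + \epsilon\psi^*(d)$ for all $\epsilon \in [0,\hat\epsilon]$. Now restrict: $\psi^*|_Q \in Q'_+$ is dual feasible for $(\text{DSILP}(Q))$, and $\psi^*|_Q(b) = \psi^*(b) = OV(b)$ by the $\epsilon = 0$ case, so by weak duality $\psi^*|_Q$ is optimal for $(\text{DSILP}(Q))$. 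Since $b + \epsilon d \in Q$ for all $\epsilon$, the pricing identity is preserved verbatim: $OV(b + \epsilon d) = \psi^*|_Q(b + \epsilon d) = OV(b) + \epsilon\psi^*|_Q(d)$ for $\epsilon \in [0,\hat\epsilon]$. Hence (DP) holds for $(\text{SILP})$--$(\text{DSILP}(Q))$.

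The only mild subtlety — and the one point worth stating carefully rather than the ``main obstacle,'' since the argument is genuinely routine — is confirming that optimality transfers under restriction. For part 1 this is handled entirely by Proposition~\ref{prop:duality-gap-iff-lift-base-functional}; for part 2 it follows from weak duality once we observe $\psi^*|_Q(b) = OV(b)$, which holds because (DP) in $Y$ already certifies $\psi^*(b) = OV(b)$ (the $\epsilon = 0$ instance of the pricing equation, or simply optimality of $\psi^*$ together with (SD), which (DP) presupposes). No topological or Fourier–Motzkin machinery is needed here; the result is purely a statement about restricting linear functionals along the inclusion chain $U \subseteq Q \subseteq Y$.
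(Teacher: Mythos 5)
Your proposal is correct and follows essentially the same route as the paper: part 1 via Proposition~\ref{prop:duality-gap-iff-lift-base-functional} and restriction of the extended functional from $Y$ to $Q$, and part 2 by restricting the $d$-dependent optimal dual solution supplied by (DP) on $Y$. If anything, you are slightly more careful than the paper's own write-up in noting that the (DP) functional may vary with $d$ and in verifying that positivity, feasibility, and optimality survive restriction.
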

\begin{proof}

Property (DP) implies property (SD) so in both cases 1. and 2. above \eqref{eq:SILP}--\eqref{eq:DSILPprime} satisfies (SD).  Then by Proposition~\ref{prop:duality-gap-iff-lift-base-functional} the base dual solution $\psi^*$ defined in \eqref{eq:strong-duality} can be extended to a positive linear functional $\bar{\psi}$ over $Y$.  Since $Q \subset Y,$  $\bar{\psi}$  is defined on $Q$  and    is an optimal dual solution with respect to the space  $Q$ since $OV(b) = \psi^{*}(b) = \bar{\psi}(b)$ and part 1. is proved.

Now show part 2.   Assume there is a $d \in Q \subseteq Y$ and  $b +  d$  is a feasible    
right-hand-side to~\eqref{eq:SILP}.  By definition of (DP) there there is an $\hat{\epsilon} > 0$ such that
\begin{eqnarray*}
OV(b + \epsilon d) = \bar{\psi}(b + \epsilon d) = OV(b) + \epsilon \bar{\psi}(d)
\end{eqnarray*}
holds for all $\epsilon \in [0, \hat \epsilon].$  But $Q \subset Y$ implies $\bar{\psi}$ is the optimal linear functional with respect to the constraint space $Q$ and property (DP) holds. \end{proof}

Another view of Propositions~\ref{prop:duality-gap-iff-lift-base-functional}~and~\ref{prop:mononticity}  is that once properties  (SD) or (DP) fail for a constraint space $Y$, then these properties fail for all larger constraint spaces. As the following example illustrates, an inability to extend can happen almost immediately as we enlarge the constraint space from $U.$ 

\begin{example}\label{ex:duality-gap-cannot-lift}
Consider the \eqref{eq:SILP}
\begin{eqnarray}\label{eq:cannot-lift-system}
\min x_{1} && \\
(1/i) x_{1} + (1/i)^{2} x_{2} &\ge& (1/i), \quad i \in \N. 
\end{eqnarray}
The smallest of the $\ell_{p}(\N)$ spaces that contains the columns of \eqref{eq:cannot-lift-system} (and thus $U$) is $Y = \ell_2$. Indeed, the first column is not in $\ell_1$ since $\sum_{i} \frac{1}{i}$ is not summable. We show (SD) fails to hold under this choice of $Y =  \ell_{2}.$ This implies that (DP) fails in $\ell_{2}$ and every space that contains $\ell_{2}.$

An optimal primal solution is  $x_{1} = 1$ and $x_{2} = 0$ with optimal solution value $1$.  The dual $\text{DSILP}(\ell_2)$ is
\begin{align}
\sup \quad &  \sum_{i=1}^\infty \frac{\psi_i}{i} \nonumber  \\
  {\rm s.t.} \quad & \sum_{i=1}^\infty \frac{\psi_i}{i} = 1 \label{first-constraint} \\
             & \sum_{i=1}^\infty \frac{\psi_i}{i^2} = 0 \label{second-constraint} \\
& \psi \in (\ell_2)_{+}. \nonumber
\end{align}
In writing $\text{DSILP}(\ell_2)$   we use  the fact that $(\ell^{\prime}_2)_+$ is isomorphic to $(\ell_2)_{+}$ (see the discussion in Section~\ref{s:preliminaries}). Observe that no nonnegative $\psi$ exists that can satisfy both \eqref{first-constraint} and \eqref{second-constraint}. Indeed, \eqref{second-constraint} implies $\psi_i = 0$ for all $i \in \N$. However, this implies that \eqref{first-constraint} cannot be satisfied. Hence, $\text{DSILP}(\ell_2) = -\infty$ and there is an infinite duality gap. Therefore (SD) fails, immediately implying that (DP) fails.
\quad $\triangleleft$
\end{example}

\paragraph{Roadmap for extensions.} Our goal is to provide a coherent theory of when  properties  (SD) and (DP) hold in spaces larger than $U.$ Our approach is to extend the base dual solution to larger spaces using Fourier-Motzkin machinery.  
We provide a brief intuition for the method, which is elaborated upon carefully in the proofs that follow. First, the Fourier-Motzkin operator $\overline{FM}(y)$ defined in~\eqref{eq:define-FM-bar} is used to map $U$ onto the vector space $\overline{FM}(U).$ 
Next a linear functional $\lambda(\tilde{y})$ (see~\eqref{eq:projected-system-linear-functional})  is defined over $\overline{FM}(U)$.  We aim to extend this linear functional to a larger vector space. Define the set
\begin{eqnarray}
\hat Y := \{y \in Y: -\infty < OV(y) < \infty\} \label{eq:define-E-hat}.
\end{eqnarray}
Note that $\hat Y$ is the set of ``interesting'' right hand sides, so it is a natural set to investigate. Extending to all of $Y$ beyond $\hat Y$ is unnecessary because these correspond  to right hand sides which give infeasible or unbounded primals.
However, the set $\hat Y$  is not necessarily a vector space, which makes it hard to talk of dual solutions acting on this set.  
If $\hat Y$ is a vector space, then $\overline{FM}(\hat{Y})$  is also a vector space and we show it is valid under the hypotheses of the Hahn-Banach Theorem to extend the linear functional $\lambda$ defined in~\eqref{eq:projected-system-linear-functional}  from   $\overline{FM}(U)$ to $\bar{\lambda}$ on $\overline{FM}(\hat{Y}).$   Finally, the adjoint $\overline{FM}'$ of the Fourier-Motzkin operator $\overline{FM}$ is used to map the extended linear functional $\bar{\lambda}$ to an optimal linear functional on  $\hat{Y}.$  Under appropriate conditions detailed below, this allows us to work with constraint spaces $\hat{Y}$  that strictly contain $U$ and still satisfy (SD) and (DP).   See Theorems~\ref{thm:SD-ell-infty}  and~\ref{theorem:sufficient-conditions-dual-pricing-alt} for careful statements and complete details. Figure~\ref{figure:extend-SD-Y} may help the reader keep track of the spaces involved.  We emphasize that in order for $(\text{DSILP}(\hat{Y}))$ to be well defined, $\hat{Y}$ must contain $U$ and itself be a vector space. 

\begin{figure}[ht]
\centering
\resizebox{3.5in}{!}{\input{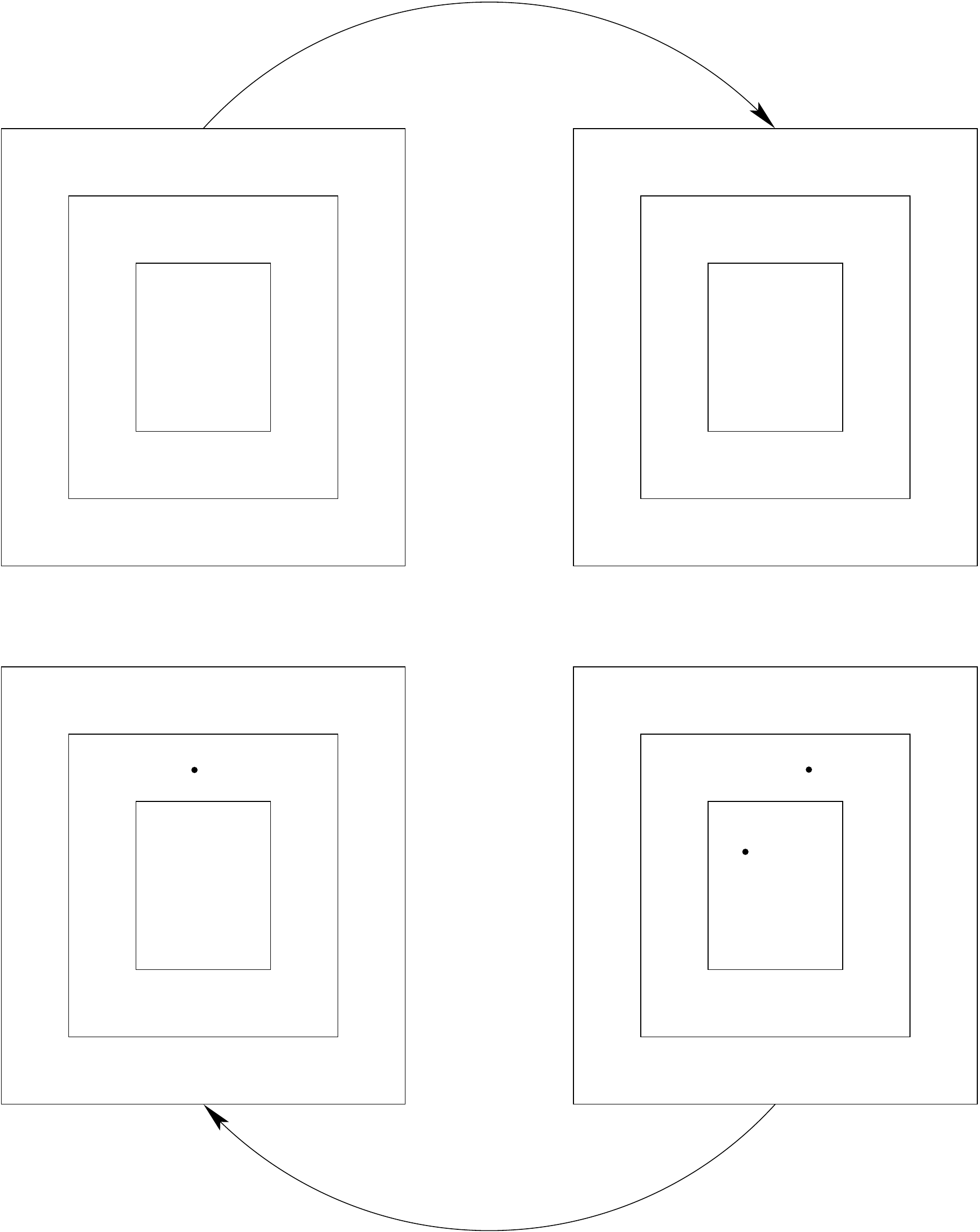_t}}
\vskip 10pt
\caption{Illustrating Theorem~\ref{theorem:extend-SD-Y}.}\label{figure:extend-SD-Y}
\end{figure}

\subsection{Strong duality for extended constraint spaces}

The following lemma is used to show $U \subseteq \hat Y$ in the subsequent discussion. 

\begin{lemma}\label{lemma:ov-ak-bounded-finite} 
If $-\infty < OV(b) < \infty$ (equivalently,  \eqref{eq:SILP} with right-hand-side $b$ is feasible and bounded), then $-\infty <OV(a^k)< \infty$ for all $k=1, \ldots, n$.
\end{lemma}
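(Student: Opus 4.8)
The plan is to use the Fourier-Motzkin characterization $OV(y) = \max\{S(y), L(y)\}$ (Theorem~\ref{theorem:fm-primal-value}) together with the observation that $OV(a^k)$ is finite whenever feasibility of the $a^k$-system is known and the value is bounded both above and below. First I would note that feasibility of \eqref{eq:SILP} with right-hand-side $a^k$ is trivial: since $-\infty < OV(b) < \infty$, \eqref{eq:SILP} is feasible for $b$, and in particular the columns $a^1,\dots,a^n$ themselves satisfy $\sum_{j} a^j(i)\cdot 0 = 0$ versus any right-hand-side only after shifting; more directly, the primal system with right-hand-side $a^k$ asks for $x$ with $\sum_{j} a^j(i) x_j \ge a^k(i)$ for all $i$, which is solved by $x = e_k$ (the $k$-th unit vector), so \eqref{eq:SILP} with right-hand-side $a^k$ is feasible and hence $OV(a^k) < \infty$.

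The substantive part is the lower bound $OV(a^k) > -\infty$. Here I would argue by contradiction using the boundedness of $OV(b)$ and the sublinearity of $OV$ established in Lemma~\ref{lem:convex-L}. Suppose $OV(a^k) = -\infty$ for some $k$. Since $OV$ is sublinear (in particular positively homogeneous), $OV(\mu a^k) = -\infty$ for every $\mu > 0$. By subadditivity, $OV(b) = OV\big((b - \mu a^k) + \mu a^k\big) \le OV(b - \mu a^k) + OV(\mu a^k)$; this is the wrong direction to get an immediate contradiction, so instead I would write $OV(b) \le OV(b + \mu a^k) + OV(-\mu a^k)$ — still not directly useful since $OV(-\mu a^k)$ need not be $-\infty$. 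The cleaner route: since $OV(\mu a^k) = -\infty$, subadditivity gives $OV(b + \mu a^k) \le OV(b) + OV(\mu a^k) = -\infty$, so $OV(b + \mu a^k) = -\infty$ for all $\mu > 0$; but $b + \mu a^k = (1)(b) + \mu a^k$, and picking $\mu$ small and using the reverse estimate $OV(b) \le OV(b + \mu a^k) + OV(-\mu a^k) \le OV(b+\mu a^k) + \mu\,OV(-a^k)$ would force $OV(b) = -\infty$ once $OV(-a^k) < \infty$. So I must also check $OV(-a^k) < \infty$, which holds because the primal with right-hand-side $-a^k$ is solved by $x = -e_k$. Combining: $OV(b) \le OV(b + \mu a^k) + \mu\, OV(-a^k) = -\infty + (\text{finite}) = -\infty$, contradicting $OV(b) > -\infty$. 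Hence $OV(a^k) > -\infty$.

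Putting the two bounds together yields $-\infty < OV(a^k) < \infty$ for every $k = 1,\dots,n$, as claimed. The main obstacle I anticipate is bookkeeping with the one-sided nature of sublinearity: subadditivity only gives inequalities in one direction, so one has to insert the auxiliary finite quantities $OV(\pm a^k)$ (via the explicit feasible points $\pm e_k$) in exactly the right places to convert an assumed $OV(a^k) = -\infty$ into the contradiction $OV(b) = -\infty$. An alternative, perhaps cleaner, route is to bypass sublinearity entirely and argue directly from $OV = \max\{S, L\}$: feasibility of \eqref{eq:SILP} with right-hand-side $a^k$ gives $OV(a^k) = \max\{S(a^k), L(a^k)\}$, and one bounds $S(a^k)$ and $L(a^k)$ below using that $\overline{FM}(a^k)(h) = \widetilde a^k(h) + c_k$ on $I_3 \cup I_4$ (Lemma~\ref{lemma:cute-little-trick}) together with boundedness of the relevant expressions coming from $OV(b)$ being finite — but the sublinearity argument above is shorter, so I would present that.
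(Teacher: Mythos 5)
Your proposal is correct, but the lower bound $OV(a^k) > -\infty$ is obtained by a genuinely different route than the paper's. The paper stays inside the Fourier--Motzkin machinery: it splits on whether $I_3$ is empty (if not, $S(a^k)$ is a supremum over a nonempty set of reals, hence $>-\infty$), and otherwise uses the finiteness of $L(b)$ to extract, via Lemma~\ref{lem:seq-L}, a sequence $\{h_m\}\subseteq I_4$ with $\tilde a^k(h_m)\to 0$, whence Lemma~\ref{lemma:cute-little-trick} gives $\overline{FM}(a^k)(h_m)\to c_k$ and Lemma~\ref{lem:seq-L} again yields the explicit bound $L(a^k)\ge c_k$. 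You instead invoke the sublinearity of $OV$ from Lemma~\ref{lem:convex-L} together with the feasible points $\pm e_k$: from $OV(a^k)=-\infty$ and positive homogeneity you get $OV(b+\mu a^k)=-\infty$ by subadditivity, and then $OV(b)\le OV(b+\mu a^k)+\mu\,OV(-a^k)\le -\infty + \mu(-c_k) = -\infty$, a contradiction. Your chain is shorter and avoids the case analysis and sequence extraction; what it gives up is the explicit lower bound $L(a^k)\ge c_k$ that the paper's argument produces. Two small points of hygiene: the extended-real arithmetic in your subadditivity steps is legitimate here only because every sum you form has at least one finite summand (which your feasible points $\pm e_k$ and $x_b+\mu e_k$ guarantee, so no $\infty-\infty$ arises), and you should state explicitly that all the perturbed right-hand sides involved ($\mu a^k$, $-\mu a^k$, $b+\mu a^k$) yield feasible primals so that $OV$ as primal value agrees with the sublinear function $\max\{S,L\}$ of Theorem~\ref{theorem:fm-primal-value}; also note that the whole detour through $\mu$ is unnecessary --- the single inequality $OV(b)\le OV(b-a^k)+OV(a^k)$ with $OV(b-a^k)<\infty$ (feasible point $x_b-e_k$) already delivers the contradiction. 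The garbled clause about ``$\sum_j a^j(i)\cdot 0=0$ versus any right-hand-side only after shifting'' in your first paragraph should be deleted; the $x=e_k$ argument that follows it is the correct (and the paper's) justification of $OV(a^k)\le c_k$.
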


\begin{proof}
If the right-hand-side vector is $a^{k}$ then $x_{k} = 1$ and $x_{j}  = 0$ for $j \neq k$ for a feasible objective value $c_{k}.$  Thus  $OV(a^k) \le  c_{k} < \infty.$

Now show $OV(a^{k}) > -\infty.$  Since $OV(a^k)  < \infty,$ by Lemma~\ref{theorem:fm-primal-value}, $OV(a^{k}) = \max\{ S(a^{k}),  L(a^{k})   \}.$  If $I_{3} \neq \emptyset$ then $S(a^{k})   > -\infty$ which implies $OV(a^{k}) > -\infty$ and we are done.  Therefore assume $I_{3} = \emptyset.$ Then $S(b) = -\infty.$  However, by  hypothesis $-\infty < OV(b) < \infty$ so by Lemma~\ref{theorem:fm-primal-value}  
\begin{align*}
OV(b) = \max\{ S(b),  L(b)   \} =   \max\{ -\infty,  L(b)   \}
\end{align*}
which implies $-\infty < L(b) < \infty.$  Then by Lemma~\ref{lem:seq-L}  there exists a sequence of distinct indices $h_m$ in $I_4$ such that  $\lim_{m\to \infty}\tilde a^k(h_m) = 0$ for all $k = \ell, \ldots, n.$    Note also that $\tilde{a}^{k}(h) = 0$ for $k = 1, \ldots, \ell -1$ and $h \in I_{4}.$  Let $\tilde{y} = \overline{FM}(a^{k}).$ Then  $\lim_{m\to \infty}\tilde a^k(h_m) = 0$ implies  by Lemma~\ref{lemma:cute-little-trick}, $\lim_{m\to \infty} \tilde{y}(h_m) = c_{k}.$  Again by Lemma~\ref{lem:seq-L}, $L(a^{k}) \ge \lim_{m \rightarrow \infty} \tilde{y}(h_{m}) = c_{k}.$
\end{proof}

\begin{theorem}\label{theorem:extend-SD-Y}
Consider an instance of \eqref{eq:SILP} that is feasible and bounded.  Let  $Y$   be a  subspace of  $ \R^I$   such that  $U \subset Y$ and  $\hat{Y}$ is a vector space.      Then  the dual problem $(\text{DSILP}(\hat{Y}))$ is solvable and (SD) holds for the primal-dual pair \eqref{eq:SILP}--$(\text{DSILP}(\hat{Y})).$  
\end{theorem}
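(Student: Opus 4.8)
The plan is to run the Fourier--Motzkin extension sketched before the theorem: push $U$ into $\R^H$ via $\overline{FM}$, extend the base functional $\lambda$ of~\eqref{eq:projected-system-linear-functional} from $\overline{FM}(U)$ to $\overline{FM}(\hat Y)$ using Hahn--Banach, and pull the extension back through the adjoint $\overline{FM}'$ to land on $\hat Y$.

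The first step is to record $U \subseteq \hat Y$, so that $(\text{DSILP}(\hat Y))$ is a legitimate dual and the base construction is available. Since~\eqref{eq:SILP} is feasible and bounded, Lemma~\ref{lemma:ov-ak-bounded-finite} gives $-\infty < OV(a^k) < \infty$ for every $k$, and $a^1,\dots,a^n,b$ all lie in $U \subseteq Y$; hence $a^1,\dots,a^n,b \in \hat Y$, and as $\hat Y$ is assumed to be a vector space, $U = \spn(a^1,\dots,a^n,b) \subseteq \hat Y$. I then import from the proof of Theorem~\ref{theorem:silps-never-have-a-duality-gap}: a sequence $\{h_m\}_{m\in\N}$ lying entirely in $I_3$ or entirely in $I_4$ (Proposition~\ref{prop:ov-seq}) with $\tilde a^k(h_m)\to 0$ for $k=1,\dots,n$ and $\tilde b(h_m)\to OV(b)$; the subspace $M$ of~\eqref{eq:define-M} containing $\overline{FM}(U)$; and the positive linear functional $\lambda(\tilde y)=\lim_m\tilde y(h_m)$ on $M$ with $\lambda(\overline{FM}(a^k))=c_k$ and $\lambda(\overline{FM}(b))=OV(b)$.

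The technical heart is the sublinear functional I feed to Hahn--Banach. Because the quantities $S(y)$ and $L(y)$ of Definition~\ref{def:S-L-OV-def} depend on $y$ only through $\tilde y=\overline{FM}(y)$ (the vectors $\tilde a^k$ and the index sets $I_1,\dots,I_4$ being fixed, by Remark~\ref{rem:FM-op}), and because $OV(y)=\max\{S(y),L(y)\}$ for every feasible right-hand-side by Theorem~\ref{theorem:fm-primal-value}, the map $OV$ factors through $\overline{FM}$ over $\hat Y$: there is a well-defined $p : \overline{FM}(\hat Y)\to\R$ with $p(\overline{FM}(y))=OV(y)$ for all $y\in\hat Y$. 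Since $\hat Y$ is a vector space and $\overline{FM}$ linear, $\overline{FM}(\hat Y)$ is a subspace of $\R^H$; since $OV$ is sublinear (Lemma~\ref{lem:convex-L}) and finite on $\hat Y$, $p$ is a finite real-valued sublinear functional on $\overline{FM}(\hat Y)$. Next I check $\lambda\le p$ on the subspace $\overline{FM}(U)$: for $y\in U$ the sequence $\tilde y(h_m)$ converges (as $\overline{FM}(U)\subseteq M$) and $\sum_{k=\ell}^n|\tilde a^k(h_m)|\to 0$, so Lemma~\ref{lem:seq-L} (when $\{h_m\}\subseteq I_4$) or Lemma~\ref{lem:seq-S} (when $\{h_m\}\subseteq I_3$) yields $\lambda(\overline{FM}(y))=\lim_m\tilde y(h_m)\le\max\{S(y),L(y)\}=OV(y)=p(\overline{FM}(y))$. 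The Hahn--Banach Theorem then produces a linear $\bar\lambda$ on $\overline{FM}(\hat Y)$ extending $\lambda$ and still dominated by $p$.

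It remains to transport $\bar\lambda$ back and check positivity. If $\tilde y=\overline{FM}(y)\ge 0$ with $y\in\hat Y$, then $-y\in\hat Y$ and $\overline{FM}(-y)=-\tilde y\le 0$, so $S(-y)\le 0$, and since each $\sum_{k=\ell}^n|\tilde a^k(h)|\ge 0$ also $L(-y)\le 0$; hence $\bar\lambda(-\tilde y)\le p(-\tilde y)=OV(-y)\le 0$, that is, $\bar\lambda(\tilde y)\ge 0$, so $\bar\lambda$ is positive on $\overline{FM}(\hat Y)$. Set $\psi^*:=\overline{FM}'(\bar\lambda)=\bar\lambda\circ\overline{FM}$, a linear functional on $\hat Y$; since $\overline{FM}$ is a positive operator and $\bar\lambda$ is positive, $\psi^*\in\hat Y'_+$. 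Because $\bar\lambda$ agrees with $\lambda$ on $\overline{FM}(U)$, which contains $\overline{FM}(a^k)$ and $\overline{FM}(b)$, we get $\psi^*(a^k)=c_k$ and $\psi^*(b)=OV(b)$, so $\psi^*$ is feasible for $(\text{DSILP}(\hat Y))$ with objective $OV(b)$, and weak duality makes it optimal -- giving both solvability of $(\text{DSILP}(\hat Y))$ and (SD). The main obstacle is the third paragraph: one has to see that $OV$ is both finite and a function of $\overline{FM}(y)$ on $\hat Y$ (which is exactly why we restrict to $\hat Y$ and must assume it is a subspace), and the domination $\lambda\le p$ on $\overline{FM}(U)$ is where Lemmas~\ref{lem:seq-L} and~\ref{lem:seq-S}, applied to the particular sequence $\{h_m\}$ extracted for the right-hand-side $b$, carry the load. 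The positivity verification is the only other genuine point, and it is where the sign structure of the Fourier--Motzkin output system~\eqref{eq:J_system} enters.
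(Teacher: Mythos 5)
Your proposal is correct and follows essentially the same route as the paper's proof: establish $U\subseteq\hat Y$ via Lemma~\ref{lemma:ov-ak-bounded-finite}, observe that $OV$ factors through $\overline{FM}$ to give the sublinear majorant $p$ on $\overline{FM}(\hat Y)$, dominate $\lambda$ by $p$ using Lemmas~\ref{lem:seq-L}--\ref{lem:seq-S}, extend by Hahn--Banach, verify positivity from $S(-y),L(-y)\le 0$, and pull back through $\overline{FM}'$. No gaps; the argument matches the paper's step for step.
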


\begin{proof}  The proof of this theorem is similar to the proof of Theorem~\ref{theorem:silps-never-have-a-duality-gap}. We  use the operator $\overline{FM}$ and consider the linear functional $\lambda$ defined in~\eqref{eq:projected-system-linear-functional} which was shown to be a linear functional on $\overline{FM}(U).$ By hypothesis,  $U \subset Y$ and so by Lemma~\ref{lemma:ov-ak-bounded-finite},   $U \subseteq \hat{Y}$ which implies  $\overline{FM}(U) \subseteq \overline{FM}(\hat{Y})$.     Since $\hat Y$ is a vector space,  $\overline{FM}(\hat{Y})$  is a vector space since $\overline{FM}$ is a linear operator.
We   use the Hahn-Banach theorem to extend $\lambda$ from $\overline{FM}(U)$ to $\overline{FM}(\hat{Y})$.  First observe that if $\overline{FM}(y^1) = \overline{FM}(y^2) = \tilde y$, then $S(y^1)=S(y^2)$ and $L(y^1) = L(y^2)$ because these values only depend on $\tilde y$, and therefore, $OV(y^1) = OV(y^2)$. This means for any $\tilde y \in \R^H$, $S,L$ and $OV$ are constant functions on the affine space $\overline{FM}^{-1}(\tilde y)$. Thus, we can push forward   the sublinear function $OV$ on $\hat{Y}$  by setting $p(\tilde y) = OV(\overline{FM}^{-1}(\tilde y))$ ($p$ is sublinear as it is the composition of the inverse of a  linear function  and a sublinear function). Moreover, by Lemmas~\ref{lem:seq-L}-\ref{lem:seq-S}  and Theorem~\ref{theorem:fm-primal-value}, $\lambda(\tilde y) \leq \max\{S(y), L(y)\} = OV(y) = p(\tilde y)$ for all $\tilde y  \in \overline{FM}(U)$. Then by the Hahn-Banach Theorem there exists an extension of $\lambda$ on $\overline{FM}(U)$  to $\bar{\lambda}$ on $\overline{FM}(\hat{Y})$ such that
\begin{align*}
-p(-\tilde y)  \le \bar{\lambda}(\tilde{y}) \le  p(\tilde y)
\end{align*}
for all $\tilde{y} \in \overline{FM}(\hat{Y}).$  We now show $\bar{\lambda}(\tilde{y})$ is positive on $\overline{FM}(\hat{Y}).$   If $\tilde{y} \ge 0$ then $-\tilde{y} \le 0$ and $\omega(\delta, -\tilde{y}) = \sup \{-\tilde{y}(h) - \delta \sum_{k=\ell}^{n} |\tilde{a}^k(h)| \, : \, h \in I_4 \} \le 0$  for all $\delta.$ Then $L(-y) = \lim_{\delta \rightarrow \infty}\omega(\delta, -\tilde{y}) \le 0$ for any $y$ such that $\tilde y  =\overline{FM}(y)$. Likewise  $S(-y)  = \sup \{-\tilde{y}(h)   \, : \, h \in I_3 \} \le 0.$  Then $S(-y), L(-y) \le 0$ implies
\begin{align*}
-p(-\tilde y) = -OV(-y) =  -\max\{ S(-y), L(-y) \}  = \min\{ -S(-y), -L(-y) \} \ge 0
\end{align*}
and $-p(-\tilde{y})  \le \bar{\lambda}(\tilde{y})$ gives $0 \le \bar{\lambda}(\tilde{y})$ on $\overline{FM}(\hat{Y}).$
 
 We have shown that $\bar{\lambda}$ is a positive linear functional on $\overline{FM}(\hat{Y}).$  It follows that  $\psi^{*} = \overline{FM}^{\prime}(\bar{\lambda})$ is a positive linear functional on $\hat{Y}.$
 
Now recall that the $\lambda$ defined in~\eqref{eq:projected-system-linear-functional} in Theorem~\ref{theorem:silps-never-have-a-duality-gap} had the property that $\langle  \overline{FM}(b), \lambda \rangle =OV(b)$ and
$
      \langle \overline{FM}(a^k), \lambda\rangle 
      = c_{k}.$
By definition of $U,$ $a^{k} \in U$ for $k = 1, \ldots, n$  and $b \in U.$  However, $\bar{\lambda}$ is an extension of $\lambda$ from  $\overline{FM}(U)$ to $\overline{FM}(\hat{Y}).$  Therefore, for $\psi^{*} = \overline{FM}^{\prime}(\bar{\lambda})$
\begin{eqnarray*}
\langle a^k, \psi^{*} \rangle = \langle  a^k, \overline{FM}'(\bar{\lambda})\rangle = \langle \overline{FM}(a^k), \bar{\lambda}\rangle  =  \langle \overline{FM}(a^k), \lambda \rangle = c_{k}.
\end{eqnarray*}
and similarly
\begin{align*}
\langle b, \psi^{*} \rangle = \langle  b, \overline{FM}'(\bar{\lambda})\rangle = \langle \overline{FM}(b), \bar{\lambda}\rangle  =  \langle \overline{FM}(b), \lambda \rangle =  OV(b)
\end{align*}
and so $\psi^{*}$ is an optimal dual solution to $(\text{DSILP}(\hat{Y}))$ with optimal value $OV(b)$. This is the optimal value of \eqref{eq:SILP}, so there is no duality gap.
\end{proof}

\begin{prop}\label{prop:E-infinity-vector-space}
If $Y$ is a  subspace of $\R^I$ such that $\overline{FM}(\hat{Y}) \subseteq \ell_{\infty}(H)$  then $\hat{Y}$ is a vector space.
\end{prop}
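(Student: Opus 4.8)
The plan is to verify the three subspace axioms for $\hat Y$: that it contains $0$, is closed under scalar multiplication, and is closed under addition. As throughout this section, assume \eqref{eq:SILP} is feasible and bounded for $b$, so $b \in \hat Y$ and, by Remark~\ref{rem:finite-OV}, $OV(0)=0$, hence $0 \in \hat Y$; and closure under multiplication by $\lambda > 0$ is immediate from positive homogeneity of $OV$ (Lemma~\ref{lem:convex-L}), since $OV(\lambda y) = \lambda OV(y)$. So the real content is closure under addition and under multiplication by $-1$.

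Two observations set this up. First, whenever $\overline{FM}(y) \in \ell_\infty(H)$ we have $S(y) \le \|\overline{FM}(y)\|_\infty$ and $L(y) \le \omega(0,y) \le \|\overline{FM}(y)\|_\infty$ (using that $\omega(\cdot,y)$ is nonincreasing), so by Theorem~\ref{theorem:fm-primal-value} such a $y$ has $OV(y) < \infty$ as soon as it is feasible. Second, since $a^1,\dots,a^n,b \in \hat Y$ by Lemma~\ref{lemma:ov-ak-bounded-finite}, the hypothesis gives $\overline{FM}(a^k), \overline{FM}(b) \in \ell_\infty(H)$, and then Lemma~\ref{lemma:cute-little-trick} shows $\tilde a^k \in \ell_\infty(H)$ for $k=\ell,\dots,n$ and $\tilde b \in \ell_\infty(H)$. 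Because $\overline{FM}$ is linear, $\overline{FM}(y_1+y_2)$ and $\overline{FM}(-y)$ lie in $\ell_\infty(H)$ for $y_1,y_2,y \in \hat Y$, so the upper-finiteness of $OV$ on sums and negatives will follow once feasibility is established.

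For closure under addition, let $y_1, y_2 \in \hat Y$. Summing primal feasible points for $y_1$ and $y_2$ gives one for $y_1+y_2$, so $OV(y_1+y_2) < \infty$ (also directly from subadditivity of $OV$). For a lower bound: if $I_3 \neq \emptyset$ then $OV(y_1+y_2) \ge S(y_1+y_2) \ge -\|\overline{FM}(y_1)\|_\infty - \|\overline{FM}(y_2)\|_\infty$; if $I_3 = \emptyset$ then $OV(y_1+y_2) = L(y_1+y_2)$, and picking $h_m \in I_4$ as in Lemma~\ref{lem:seq-L} with $\sum_{k=\ell}^n |\tilde a^k(h_m)| \to 0$ and $\tilde y_1(h_m) \to L(y_1)$ we get, for every $\delta \ge 0$,
\[
\omega(\delta, y_1+y_2) \;\ge\; \tilde y_1(h_m) + \tilde y_2(h_m) - \delta\sum_{k=\ell}^n |\tilde a^k(h_m)| \;\ge\; \tilde y_1(h_m) - \|\overline{FM}(y_2)\|_\infty - \delta\sum_{k=\ell}^n |\tilde a^k(h_m)|,
\]
so letting $m\to\infty$ gives $\omega(\delta, y_1+y_2) \ge L(y_1) - \|\overline{FM}(y_2)\|_\infty$ for all $\delta$, hence $L(y_1+y_2) > -\infty$. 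Thus $y_1+y_2 \in \hat Y$.

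The remaining and hardest step is closure under multiplication by $-1$: for $y \in \hat Y$, show $-y$ is feasible; then the argument above, applied to $-y$ and using boundedness of $\overline{FM}(y)$ and of the $\tilde a^k$, gives $-\infty < OV(-y) < \infty$, and $\hat Y$ — being closed under addition, nonnegative scaling, and negation — is closed under all linear combinations. Since \eqref{eq:J_system} is the projection of \eqref{eq:initial-system-obj-con}--\eqref{eq:initial-system-con}, feasibility of $-y$ reduces to three conditions on $\overline{FM}(-y) = -\overline{FM}(y)$: that $-\overline{FM}(y)(h) \le 0$ for $h \in I_1$; that the block $\{\sum_{k=\ell}^n \tilde a^k(h) x_k \ge -\overline{FM}(y)(h) : h \in I_2\}$ is consistent; and that the suprema over $I_3$ and $I_4$ governing a feasible choice of $z$ are finite (automatic here, as $\overline{FM}(y)$ and the $\tilde a^k$ are bounded). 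The crux, which I expect to be the main obstacle, is showing that under $\overline{FM}(\hat Y) \subseteq \ell_\infty(H)$ the $I_1$- and $I_2$-blocks cannot obstruct feasibility of $-y$ for $y \in \hat Y$; this is precisely the point where one must exploit the hypothesis about the whole set $\overline{FM}(\hat Y)$, not merely boundedness of an individual image. Granting it, all three axioms hold and $\hat Y$ is a vector space.
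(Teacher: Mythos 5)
Your skeleton (contains $0$, closed under positive scaling, under addition, under negation) matches the paper's, and your treatment of sums is essentially the paper's argument: subadditivity of $OV$ for the upper bound, and the $\ell_\infty$ hypothesis together with a sequence $h_m \in I_4$ with $\sum_{k=\ell}^n|\tilde a^k(h_m)|\to 0$ for the lower bound on $L$ (the paper extracts this sequence once and for all from Proposition~\ref{prop:ov-seq} applied to an arbitrary element of $\hat Y$; you get it from Lemma~\ref{lem:seq-L} applied to $y_1$ --- same thing). The problem is the negation step. You correctly observe that if $OV(-y)$ literally means the optimal value of \eqref{eq:SILP} with right-hand side $-y$, then one must first establish that this primal is \emph{feasible}, and you identify the $I_1$- and $I_2$-blocks of \eqref{eq:J_system} as the obstruction --- but you then write ``granting it'' and never supply the argument. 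That is a genuine gap, and under your reading it cannot be closed: the hypothesis $\overline{FM}(\hat Y)\subseteq\ell_\infty(H)$ does nothing to force $\overline{FM}(-y)(h)=-\tilde y(h)\le 0$ on $I_1$. Already the finite LP $\inf x_1$ subject to $x_1\ge y_1$, $-x_1\ge y_2$ has $\hat Y=\{y: y_1+y_2\le 0\}$ under the ``primal optimal value'' reading --- not a vector space --- even though $H$ is finite, so $\overline{FM}(\hat Y)\subseteq\ell_\infty(H)$ holds trivially.

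The paper's own proof never confronts feasibility: it bounds $S(-y)$ and $L(-y)$ above by $\|\tilde y\|_\infty$, gets $OV(-y)\ge -OV(y)>-\infty$ from sublinearity and $OV(0)=0$, and then simply writes $OV(-y)=\max\{S(-y),L(-y)\}$. In other words, in Lemma~\ref{lem:convex-L} and in this proposition the paper is operating with $\max\{S(\cdot),L(\cdot)\}$ as the de facto definition of $OV$ on all of $\R^I$; Theorem~\ref{theorem:fm-primal-value} only guarantees that identity when the primal is feasible. Under that convention the ``crux'' you flagged disappears, and the estimates you already have (boundedness of $\overline{FM}(-y)$ and of the $\tilde a^k$, plus the same $h_m$ sequence) finish the proof exactly as for $x+y$. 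So either adopt that convention explicitly and replace your feasibility discussion with direct upper bounds on $S(-y)$ and $L(-y)$, or accept that the deferred step is not merely hard but false in general. As written, the proposal is incomplete.
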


\begin{proof}

If $\hat Y$ is empty we are trivially done. Otherwise let $\bar y$ be any element of $\hat Y$.   Then $-\infty < OV(\bar{y}) < \infty$  so by Proposition~\ref{prop:ov-seq} there exists a sequence $\{h_m\}_{m\in \N}$ in $H$ such that $\tilde a^k(h_m) \to 0$ for $k = 1, \ldots, n$ as $m\to \infty$ which implies $\lim_{m\to\infty} \sum_{k=\ell}^n |\tilde a^k(h_m)| = 0.$ The only purpose of $\bar y$ is to generate the sequence $\left\{h_m\right\}$, which is used below.  

Consider $x,y \in \hat Y$, then $OV(x+y) \leq OV(x) + OV(y) < \infty$ by sublinearity of $OV$. We now show that $-\infty < OV(x+y)$. If $I_3$ is nonempty, then $S(x+y) > -\infty$ and therefore, $OV(x+y) \geq S(x+y) > -\infty$. If $I_3$ is empty, then $OV(x+y) = L(x+y)$ and it suffices to show $L(x+y) >-\infty$. Let $\tilde x = \overline{FM}(x)$ and $\tilde y = \overline{FM}(y)$. By hypothesis, there exists an $N > 0$ such that $ ||\tilde x||_\infty <  N $ and  $ ||\tilde y||_\infty <  N.$
For  any $\delta > 0,$
\begin{eqnarray*}
\omega(\delta,x+y) &=& \sup\{ \tilde{x}(h) + \tilde{y}(h) -  \delta \sum_{k=\ell}^{n} |\tilde{a}^k(h)| \, : \, h \in I_{4} \} \\
&\ge& \sup\{\tilde{x}(h_{m}) +\tilde{y}(h_{m}) - \delta  \sum_{k=\ell}^{n} |\tilde{a}^k(h_{m})| \, : \, m \in \N \} \\
&\ge& \sup\{ -2N - \delta \sum_{k=\ell}^{n} |\tilde{a}^k(h_{m})| \, : \, m \in \N \}  \\
&=& \sup\{  - \delta \sum_{k=\ell}^{n} |\tilde{a}^k(h_{m})| \, : \, m \in \N \} - 2N \\
&=& \delta \sup\{  -  \sum_{k=\ell}^{n} |\tilde{a}^k(h_{m})| \, : \, m \in \N \} -  2N \\
&=& -2N
\end{eqnarray*}
where the last equality comes from the fact that  $ -  \sum_{k=\ell}^{n} |\tilde{a}^k(h_{m})| \le 0$ for all $m \in \N$ and this implies $\sup\{  -  \sum_{k=\ell}^{n} |\tilde{a}^k(h_{m})| \, : \, m \in \N \} \le 0$.  Then $\sum_{k=\ell}^{n} |\tilde{a}^k(h_m)| \to 0$  implies that this supremum is zero. Therefore 
\begin{eqnarray*}
L(x + y) = \lim_{\delta \rightarrow \infty} \omega(\delta) \ge -2N > -\infty.
\end{eqnarray*}

We now confirm that for all $y \in \hat Y$ and $\alpha\in \R$, $-\infty < OV(\alpha y) < \infty$. If $\alpha > 0$ then $OV(\alpha y) = \alpha OV(y)$ by sublinearity of $OV$ and the result follows. Thus, it suffices to check that $-\infty < OV(-y) < \infty$ for all $y \in \hat Y$. By sublinearity of $OV$, $OV(y) + OV(-y) \geq OV(0) = 0$ by Remark~\ref{rem:finite-OV}. Thus, $OV(-y) \geq -OV(y) > -\infty$. 
We now show that $S(-y), L(-y) < \infty$ which  implies  $OV(-y) = \max\{S(-y), L(-y)\} < \infty$. By hypothesis, there exists  $N > 0$ such that $ ||\tilde y||_\infty <  N $. Therefore, $S(-y) = \sup\{-\tilde y(h): h \in I_3\} < N < \infty$. Finally, for every $\delta \geq 0$, 
\begin{eqnarray*}
\omega(\delta,-y) &= &\sup\{-\tilde y(h) - \delta\sum_{k=\ell}^n|\tilde a^k(h)|: h \in I_4\} \\ & \leq &\sup\{-\tilde y(h): h \in I_4\} < N < \infty.
\end{eqnarray*}
This implies $L(-y) = \lim_{\delta \rightarrow \infty} \omega(\delta,-y)  < N <\infty.$
\end{proof}

Theorem~\ref{thm:SD-ell-infty} is an immediate consequence of Theorem~\ref{theorem:extend-SD-Y} and Proposition~\ref{prop:E-infinity-vector-space}.

\begin{theorem}\label{thm:SD-ell-infty}
Suppose the constraint space $Y$ for~\eqref{eq:SILP} is such that $\overline{FM}(\hat{Y})\subseteq \ell_\infty(H)$.   Then  for any $b \in \hat{Y}$ the dual problem $(\text{DSILP}(\hat{Y}))$ is solvable and (SD) holds for the dual pair \eqref{eq:SILP}--$(\text{DSILP}(\hat{Y})).$ \end{theorem}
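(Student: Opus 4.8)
The plan is to obtain Theorem~\ref{thm:SD-ell-infty} directly by feeding Proposition~\ref{prop:E-infinity-vector-space} into Theorem~\ref{theorem:extend-SD-Y}. The hypothesis $\overline{FM}(\hat Y) \subseteq \ell_\infty(H)$ is exactly the hypothesis of Proposition~\ref{prop:E-infinity-vector-space}, so that proposition immediately yields that $\hat Y$ is a vector space. The only remaining work is to check that the other hypotheses of Theorem~\ref{theorem:extend-SD-Y} hold and then quote its conclusion.

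First I would note that $Y$, being a legitimate constraint space for \eqref{eq:DSILPprime}, contains $U = \spn(a^1,\dots,a^n,b)$; this is the standing assumption emphasized throughout the paper, and it is what makes $(\text{DSILP}(\hat Y))$ meaningful (recall $\hat Y$ must contain $U$ and be a vector space). Together with ``$\hat Y$ is a vector space'' this is precisely the input of Theorem~\ref{theorem:extend-SD-Y}. In the edge case $U = Y$ one has $\hat Y = \hat U$ and the statement already follows from Theorem~\ref{theorem:silps-never-have-a-duality-gap}, so one may assume $U \subsetneq Y$ without loss of generality. I would also record, for bookkeeping, that $U \subseteq \hat Y$: since $\hat Y$ is nonempty (it contains $0$ by Remark~\ref{rem:finite-OV} whenever it contains any feasible bounded $b$), Lemma~\ref{lemma:ov-ak-bounded-finite} gives $-\infty < OV(a^k) < \infty$ for each $k$, and closure of $\hat Y$ under linear combinations then places all of $U$ inside $\hat Y$; this is in any case already absorbed into the proof of Theorem~\ref{theorem:extend-SD-Y}.

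Second, I would fix an arbitrary $b \in \hat Y$. By the definition of $\hat Y$ in \eqref{eq:define-E-hat}, the instance of \eqref{eq:SILP} with right-hand-side $b$ satisfies $-\infty < OV(b) < \infty$, i.e.\ it is feasible and bounded, which is the hypothesis of Theorem~\ref{theorem:extend-SD-Y}. Applying that theorem verbatim, $(\text{DSILP}(\hat Y))$ is solvable and (SD) holds for the pair \eqref{eq:SILP}--$(\text{DSILP}(\hat Y))$. Since $b \in \hat Y$ was arbitrary, this is the assertion of the theorem.

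There is essentially no obstacle beyond aligning hypotheses: all the substance lies in the two results being invoked. Proposition~\ref{prop:E-infinity-vector-space} does the genuine work of showing the uniform bound forces $\hat Y$ to be closed under addition and negation, using the sequence $\{h_m\}$ with $\sum_k|\tilde a^k(h_m)|\to 0$ produced by Proposition~\ref{prop:ov-seq}; and Theorem~\ref{theorem:extend-SD-Y} does the Hahn--Banach extension of $\lambda$ along the sublinear majorant $p = OV\circ\overline{FM}^{-1}$, including the verification that the extension remains positive and that $\psi^\ast = \overline{FM}'(\bar\lambda)$ is dual feasible with value $OV(b)$. So the ``proof'' here is simply the observation that the $\ell_\infty(H)$ condition is a convenient, checkable sufficient condition guaranteeing the vector-space hypothesis of Theorem~\ref{theorem:extend-SD-Y}.
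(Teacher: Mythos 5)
Your proposal is correct and matches the paper exactly: the paper states that Theorem~\ref{thm:SD-ell-infty} is an immediate consequence of Theorem~\ref{theorem:extend-SD-Y} and Proposition~\ref{prop:E-infinity-vector-space}, which is precisely the composition you carry out. The extra bookkeeping you supply (that $Y \supseteq U$ as a standing assumption, and that $U \subseteq \hat Y$ via Lemma~\ref{lemma:ov-ak-bounded-finite}) is consistent with how those hypotheses are handled inside the proof of Theorem~\ref{theorem:extend-SD-Y}.
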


\begin{remark}\label{remark:sufficient-condition-for-theorem-SD-ell-infinity} 
The hypotheses Proposition~\ref{prop:E-infinity-vector-space} and of Theorem~\ref{thm:SD-ell-infty} look rather technical, we make two remarks about how to verify these conditions.

\begin{enumerate}
\item The hypotheses Proposition~\ref{prop:E-infinity-vector-space} and of Theorem~\ref{thm:SD-ell-infty}   require  $\overline{FM}(\hat{Y})\subseteq \ell_\infty(H).$  However, it may be easier to show    $\overline{FM}(Y)\subseteq \ell_\infty(H)$ which implies  $\overline{FM}(\hat{Y})\subseteq \ell_\infty(H)$ since  $\hat{Y} \subseteq Y.$  For example, if $Y$ is an $\ell_p$ space for some $1 \le p \le \infty$ and  $\overline{FM}(Y)\subseteq \ell_\infty(H)$ then there is a zero duality gap for all $b$ for which~\eqref{eq:SILP} is feasible and bounded.

\item If \eqref{eq:SILP} has $n$ variables then a Fourier-Motzkin multiplier vector has at most $2^n$ nonzero components. Therefore, if the constraint space $Y \subseteq \ell_{\infty}(I) $ and  the nonzero components of the  multiplier vectors $u$ obtained by the Fourier-Motzkin elimination process have a common upper bound $N,$ then  we satisfy the  condition $\overline{FM}(Y) \subseteq \ell_\infty(H) $ in Proposition~\ref{prop:E-infinity-vector-space} and Theorem~\ref{thm:SD-ell-infty}.  Checking that the nonzero components of the  multiplier vectors $u$ obtained by Fourier-Motzkin elimination process have a common upper bound $N$ is verifiable through the Fourier-Motzkin procedure.
\end{enumerate}
\end{remark}

\begin{example}[Example~\ref{ex:duality-gap-cannot-lift}, continued]\label{ex:duality-gap-cannot-lift-2}
Recall that (SD) fails in Example~\ref{ex:duality-gap-cannot-lift}. In this case, $a^{1}, a^{2}, b \in \ell_{\infty}$ (indeed in $\ell_2$)  however the condition $\overline{FM}(\hat{Y}) \subseteq \ell_\infty(H)$ fails since the Fourier-Motzkin  multiplier vectors  are $(1, 0, \ldots, 0, i, 0, \ldots)$ for all $i \in \N$ and $\overline{FM}(-e) \not\in \ell_\infty(H)$  for $e = (1, 1, \ldots,)$   but $-e \in \hat{Y}.$
\end{example}

\subsection{An Example where (SD) holds but (DP) fails}

In Example~\ref{example:karney-modified}  we illustrate a case where  (SD) holds but (DP) fails. In the following subsection we provide sufficient conditions that guarantee when (DP) holds.  

\begin{example}\label{example:karney-modified}
Consider the following modification of Example~1 in Karney \cite{karney81}.
\begin{align}\label{eq:karney-modified}
\begin{array}{rcccl}
\inf x_{1} &&&& \\
x_{1}&& &\ge& -1 \\
&-x_{2} &&\ge& -1 \\
&&-x_{3} &\ge& -1 \\
x_{1} &+ x_{2} & &\ge& 0 \\
x_{1} &- \frac{1}{i} x_{2} &+ \frac{1}{i^{2}} x_{3} &\ge& 0, \quad i = 5, 6, \ldots
\end{array}
\end{align}
In this example $I = \N$. The smallest of the standard constraint spaces that contains the columns and right-hand-side of \eqref{eq:karney-modified} is $\mathfrak c$. To see this note that the first column in the sequence, $(1,0,0,1,1,\dots)$, in not an element of  $\ell_p$  (for $1 \le p < \infty$) and is also not contained in $\mathfrak c_0$. It is easy to check that the columns and the  right hand side  lie in $\mathfrak c$. We show that (SD) holds with $(\text{DSILP}(\mathfrak c))$ but (DP) fails. Then, by Proposition~\ref{prop:mononticity}, (DP) fails for any sequence space that contains $\mathfrak c,$ including $\ell_\infty$.

Our analysis uses the Fourier-Motzkin elimination procedure. First write the constraints of the problem in standard form 
\begin{align*}
\begin{array}{rrcccrl}
z &-x_{1}&&&\ge& 0& b_{0}\\
&x_{1}&& &\ge& -1&  b_{1}\\
&&-x_{2} &&\ge& -1& b_{2} \\
&&&-x_{3} &\ge& -1 &b_{3}\\
&x_{1} &+ x_{2} & &\ge& 0&b_{4} \\
&x_{1} &- \frac{1}{i} x_{2} &+ \frac{1}{i^{2}} x_{3} &\ge& 0 & b_{i}, \quad  i = 5, 6, \ldots,
\end{array}
\end{align*}
and eliminate $x_{3}$ to yield (tracking the multipliers on the constraints to the right of each constraint)
\begin{align*}
\begin{array}{rrccrl}
z &-x_{1}&&\ge& 0& b_{0}\\
&x_{1}& &\ge& -1&  b_{1}\\
&&-x_{2} &\ge& -1& b_{2} \\
&x_{1} &+ x_{2}  &\ge& 0&b_{4} \\
&x_{1} &- \frac{1}{i} x_{2}  &\ge& -\frac{1}{i^{2}} & (\frac{1}{i^{2}})b_{3} + b_{i},   \quad  i = 5, 6, \ldots ,
\end{array}
\end{align*}
then $x_2$ to give
\begin{align*}
\begin{array}{rrcrl}
z &-x_{1}&\ge& 0& b_{0}\\
&x_{1} &\ge& -1&  b_{1}\\
&x_{1}   &\ge& -1&b_{2} + b_{4} \\
&\frac{(1+i)}{i}x_{1}   &\ge& -\frac{1}{i^{2}} & (\frac{1}{i^{2}})b_{3} + (\frac{1}{i})b_{4} + b_{i},   \quad  i = 5, 6, \ldots,
\end{array}
\end{align*}
and finally $x_1$ to give 
\begin{equation}\label{eq:karney-modified-projected}
\begin{array}{rrrl}
 z&\ge& -1&  b_{0} + b_{1}\\
 z  &\ge& -1&b_{0} + b_{2} + b_{4} \\
z &\ge& \frac{-1}{i(1 + i)} &  b_{0} +  \frac{b_{3}}{i(1 + i)} + \frac{b_{4}}{(1+i)} + \frac{i b_{i}}{(1+i)},   \quad  i = 5, 6, \ldots 
\end{array}
\end{equation}

We first claim that (SD) holds.  The components of the Fourier-Motzkin multipliers (which can be read off the right side of~\eqref{eq:karney-modified-projected}) have an upper bound of 1.
 By Remark~\ref{remark:sufficient-condition-for-theorem-SD-ell-infinity}  the hypotheses of Theorem~\ref{thm:SD-ell-infty} hold and  we have (SD). 

We now show that (DP) fails.  We do this by showing that there is a unique optimal dual solution (Claim 1) and that (DP) fails for this unique solution (Claim 2). 

\vskip 7pt
\textbf{Claim 1.} The limit functional $\psi_{0 \oplus 1}$ (using the notation set for dual linear functionals over $\mathfrak c$ introduced in Section~\ref{s:preliminaries}) is the unique dual optimal solution to $(\text{DSILP}(\mathfrak c))$. 
\vskip 7pt

Recall that every positive dual solution in $\mathfrak c$ has the form $\psi_{w \oplus r}$ where $w \in \ell^1_+$ and $r \in \R$ and $\psi_{w \oplus r}(y) = \sum_{i = 1}^\infty w_iy_i + ry_\infty$ for every convergent sequence $y$ with limit $y_\infty$. The constraints to $(\text{DSILP}(\mathfrak c))$ are written as follows
\begin{align*}
\psi_{w\oplus r}(a^1) = 1, \quad \psi_{w\oplus r}(a^2) = 0, \quad \psi_{w\oplus r}(a^3) = 0.
\end{align*}
This implies the following about $w$ and $r$ for dual feasibility
\begin{align*}
w_1 + w_4 + \sum_{i = 5}^\infty w_i + ra^1_\infty &= 1 \\
-w_2 + w_4 - \sum_{i = 5}^\infty \frac{w_i}{i} + ra^2_\infty &= 0 \\
-w_3  - \sum_{i = 5}^\infty \frac{w_i}{i^2} + ra^3_\infty &= 0
\end{align*}
which simplifies to 
\begin{align}
w_4 & = 1 - w_1 - \sum_{i = 5}^\infty w_i - r \label{eq:feasibility-1}\\
w_4 & = w_2 + \sum_{i = 5}^\infty \frac{w_i}{i} \label{eq:feasibility-2} \\
0 & = w_3 + \sum_{i = 5}^\infty \frac{w_i}{i^2} \label{eq:feasibility-3}
\end{align}
by noting $a^1_\infty = 1$ and $a^2_\infty = a^3_\infty = 0$. The dual objective value for a feasible $\psi_{w \oplus r}$ is
\begin{align*}
\psi_{w \oplus r}(b) = - w_1 - w_2 - w_3
\end{align*}
since $b_\infty = 0$. 

Clearly, $\psi_{0 \oplus 1}$ is feasible ($w = 0$ and $r = 1$ trivially satisfies \eqref{eq:feasibility-1}--\eqref{eq:feasibility-3}) with an objective value of $0$. Now consider an arbitrary dual solution $\psi_{w \oplus r}$. If any one of $w_1, w_2, w_3 > 0$ then $\psi_{w \oplus r}(b) < 0$ (recall that $w \ge 0$) and so $\psi_{w \oplus r}$ is not dual optimal since $\psi_{0 \oplus 1}$ yields a greater objective value. This means we can take $w_1 = w_2 = w_3 = 0$ in any optimal dual solution. Combined with   \eqref{eq:feasibility-3} this implies $\sum_{i = 5}^\infty \frac{w_i}{i^2} = 0.$    Since $w_i \ge 0$ this implies $w_i = 0$ for $i = 5, 6, \dots$. From \eqref{eq:feasibility-2} this implies $w_4 = 0$. Thus,  in every dual optimal solution  $w = 0$ and \eqref{eq:feasibility-1} implies $r = 1$. Therefore the limit functional $\psi_{0 \oplus 1}$ is the unique optimal dual solution, establishing the claim. \quad $\dagger$

The limit functional is an optimal dual solution with an objective value of $0$ which is also the optimal primal value since (SD) holds. 
Next we argue that (DP) fails. Since the limit functional is the unique optimal dual solution, it is the only allowable $\psi^*$ in \eqref{eq:dual-pricing}. This observation makes it easy to verify that (DP) fails. We show that \eqref{eq:dual-pricing} fails for $\psi_{0 \oplus 1}$ and $d = (0,0,0,1,0,\dots)$.  This perturbation vector $d$ leaves the problem unchanged except for fourth constraint, which becomes $x_{1} + x_{2} \ge \epsilon$. 
\vskip 7pt
\textbf{Claim 2.}
For all sufficiently small $\epsilon > 0$, the primal problem with the new right-hand-side vector $b + \epsilon d$ for $d = (0,0,0,1,0,\dots)$ is feasible and has a primal objective function value $OV(b + \epsilon d)$ strictly greater than zero. 
\vskip 7pt

Observe from~\eqref{eq:karney-modified-projected}  that $I_{1}$ and $I_{2}$ are empty  and that  the  primal  is feasible  for right-hand-side vector $b + \epsilon d$ for all $\epsilon.$  The third set of inequalities in~\eqref{eq:karney-modified-projected}  are
 \begin{eqnarray*}
 z \ge \frac{-1}{i(1 + i)}   b_{0} +  \frac{b_{3}}{i(1 + i)} + \frac{b_{4}}{(1+i)} + \frac{i b_{i}}{(1+i)},   \quad  i = 5, 6, \ldots 
 \end{eqnarray*} 
  When  $b_{4}$ is changed from 0 to $\epsilon$  we have $b_{0} = 0,$  $b_{1} = b_{2} = b_{3} = -1,$  $b_{4} = \epsilon,$ and $b_{i}= 0.$ These values give
  \begin{eqnarray*}
 z \ge    \frac{-1}{i(1 + i)} + \frac{\epsilon}{(1+i)}     =   \frac{1}{(1+ i)} \left( \epsilon - \frac{1}{i}  \right),   \quad  i = 5, 6, \ldots 
 \end{eqnarray*} 
Let $\epsilon = 1/N$ for a positive integer $N \ge 3.$  Define $\hat i = 2/\epsilon = 2 N$.  Then constraint $\hat i$ is
\begin{eqnarray*}
z \ge \frac{1}{(\frac{2}{\epsilon}+1)} \left(\epsilon - \frac{1}{\frac{2}{\epsilon}}  \right) =   \frac{1}{(\frac{2}{\epsilon}+1)} \left(\frac{\epsilon}{2} \right) > 0. 
\end{eqnarray*}   
This constraint is a lower bound on the objective value of the primal and this implies that $OV(b + \frac{1}{N} d) \ge \frac{1}{(\frac{2}{\epsilon}+1)} \left(\frac{\epsilon}{2} \right) > 0$. This establishes the claim. \quad $\dagger$

To show \eqref{eq:dual-pricing} does not hold,  observe $d$  has finite support so the that limit functional evaluates $d$ to zero.  That is, $\psi_{0 \oplus 1}(d) = 0$. This implies that for all sufficiently small $\epsilon$,
\begin{align*}
OV(b) + \epsilon\psi_{0 \oplus 1}(d) = 0 < OV(b + \epsilon d),
\end{align*}
where the inequality follows by Claim 2. Hence, there does not exist an $\hat \epsilon > 0$ such that \eqref{eq:dual-pricing} holds for $\psi^* = \psi_{0 \oplus 1}$ and $d = (0,0,0,1,0,\dots)$. This implies that (DP) fails. 
\quad $\triangleleft$  
\end{example}

\subsection{Dual pricing in extended constraint spaces}

The fact that (DP) fails for this example is intuitive. The structure of the primal is such that the only dual solution corresponds to the limit functional. However, the value of the limit functional is unchanged by  perturbations to a finite number of constraints. Since the primal optimal value changes under finite support perturbations, this implies that the limit functional cannot correctly ``price'' finite support perturbations. 

 Despite the existence of many sufficient conditions for (SD) in the literature, to our knowledge sufficient conditions to ensure (DP) for semi-infinite programming have only recently been considered for the finite support dual \eqref{eq:FDSILP} (see Goberna and L\'{o}pez \cite{goberna2014post} for a summary of these results). We contrast our results with those in Goberna and L\'{o}pez \cite{goberna2014post} following the proof of Theorem~\ref{theorem:sufficient-conditions-dual-pricing-alt}. Our sufficient conditions for (DP),  based on the output \eqref{eq:J_system} of the Fourier-Motzkin elimination procedure, are

\begin{enumerate}[DP.1]

\item \label{item:dp-condition-sup-S-vertical}   
If $I_3 \neq \emptyset$  and
$\mathcal H_S := \{\{h_{m}\}_{m \in \N} \subseteq I_{3} \text{ and } \limsup \{\tilde{b} (h_{m})\}_{m \in \N} < S(b) \}$ 
then 
\begin{align*}
\sup \{ \limsup \{\tilde{b} (h_{m})\}_{m \in \N}  : \{h_{m}\}_{m \in \N} \in \mathcal H_S \}   < S(b).
\end{align*}
\item \label{item:dp-condition-sup-L-vertical}   
If $I_4 \neq \emptyset$ and 
\begin{align*}
\mathcal H_L :=\{ \{h_m\}_{m \in \N} \subseteq I_4 :  \limsup \{\tilde{b} (h_{m})\}_{m \in \N} < L(b) \text{ and } \lim_{m\to \infty} \sum_{k=\ell}^n |\tilde a^k(h_m)| = 0\}
\end{align*}
then 
\begin{align*}
\sup \{ \limsup \{\tilde{b} (h_{m})\}_{m \in \N} : \{h_{m}\}_{m \in \N} \in \mathcal H_L \} < L(b).
\end{align*}

\end{enumerate}

By Lemmas~\ref{lem:seq-L}  and~\ref{lem:seq-S}, subsequences $\{\tilde{b}(h)\}$ with  the indices $h$ in $I_{3}$ or $I_{4}$ are bounded above by $S(b)$ and $L(b),$ respectively,  and in the case of $L(b)$, $\tilde a^k(h) \to 0$ for all $k = 1, \dots, n$. Conditions  DP.\ref{item:dp-condition-sup-S-vertical}-DP.\ref{item:dp-condition-sup-L-vertical} require that limit values of these subsequences that do not achieve $S(b)$ or $L(b)$ (depending on whether the sequence is in $I_3$ or $I_4$, respectively) do not become arbitrarily close to $S(b)$ or $L(b)$. 

\begin{remark}\label{remark:thoughts-on-DP-1-DP-2}
In the case of  Condition DP.\ref{item:dp-condition-sup-S-vertical}, given $h \in I_3$    we may take $h_m = h$ for all $m \in \N$ and then $\limsup \{\tilde{b} (h_{m})\}_{m \in \N} = \tilde{b} (h).$  Then   Condition DP.\ref{item:dp-condition-sup-S-vertical}  becomes $\sup \{ \tilde{b}(h) \, : \,  h \in I_{3}  \text{ and } \tilde{b}(h) < S(b) \} < S(b)$ when $I_3 \neq \emptyset$.  This condition can only hold if the supremum of the $\tilde{b}(h)$ is achieved over $I_{3}.$  A similar conclusion does not hold for DP.\ref{item:dp-condition-sup-L-vertical}. In this case $\{ h_{m} \}_{m \in \N}$ cannot be a sequence of identical indices if  $\lim_{m\to \infty} \sum_{k=\ell}^n |a^k(h_m)| = 0$  since  $\sum_{k=\ell}^{n} |\tilde{a}^k(h_{m})| \neq 0$ for all $h_{m} \in I_{4}.$
\end{remark}

The proof of theorem uses three technical lemmas (Lemmas~\ref{lem:conv-comb}--\ref{lemma:s-vertical-pricing}) found in the appendix.

\begin{theorem}\label{theorem:sufficient-conditions-dual-pricing-alt}
Consider an instance of \eqref{eq:SILP} that is feasible and bounded for right-hand-side $b.$
Suppose the constraint space $Y$ for~\eqref{eq:SILP} is such that $\overline{FM}(\hat{Y})\subseteq \ell_\infty(H)$    and Conditions DP.\ref{item:dp-condition-sup-S-vertical} and DP.\ref{item:dp-condition-sup-L-vertical} hold. Then   property (DP) holds for  \eqref{eq:SILP}.
\end{theorem}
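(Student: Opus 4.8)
The plan is to carry out, for each admissible perturbation $d$, the construction of Theorem~\ref{theorem:extend-SD-Y}, but to start from a Fourier--Motzkin index sequence chosen finely enough that the resulting dual functional prices $d$ exactly. Throughout, the constraint space is $\hat Y$, which is a vector space by Proposition~\ref{prop:E-infinity-vector-space}; (SD) holds by Theorem~\ref{thm:SD-ell-infty}; and since $\hat Y$ is a vector space containing $U$, every $d \in \hat Y$ gives $b + \epsilon d \in \hat Y$ (hence feasible and bounded) for all $\epsilon \ge 0$, so Theorem~\ref{theorem:fm-primal-value} always yields $OV(b+\epsilon d) = \max\{S(b+\epsilon d), L(b+\epsilon d)\}$. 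Fix $d \in \hat Y$, put $\tilde b = \overline{FM}(b)$, $\tilde d = \overline{FM}(d)$, and $N = \|\tilde d\|_\infty$; if $N = 0$ then $\overline{FM}(d) = 0$, so $OV(b+\epsilon d) = OV(b)$ for all $\epsilon$ (the functions $S$, $L$, $OV$ depend only on $\overline{FM}(\cdot)$) and any base dual solution works, so assume $N > 0$.

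\textbf{Step 1 (affinity of $OV$ along the ray, with the slope realized by a sequence).} From $|\tilde d| \le N$ on $H$ one gets the Lipschitz-type bounds $|S(b+\epsilon d) - S(b)| \le \epsilon N$ and $|L(b+\epsilon d) - L(b)| \le \epsilon N$ (whenever finite), so for small $\epsilon$ the maximum defining $OV(b+\epsilon d)$ is governed by whichever of $S(b)$, $L(b)$ equals $OV(b)$. For the $S$-contribution I use Remark~\ref{remark:thoughts-on-DP-1-DP-2}: DP.\ref{item:dp-condition-sup-S-vertical} forces $S(b)$ to be attained on $I_3^\star := \{h \in I_3 : \tilde b(h) = S(b)\}$, with $\tilde b(h) \le S(b) - \gamma_S$ for some $\gamma_S > 0$ on the remaining indices of $I_3$; hence for $\epsilon < \gamma_S/(2N)$ the supremum defining $S(b+\epsilon d)$ is attained over $I_3^\star$, giving $S(b+\epsilon d) = S(b) + \epsilon s_d$ with $s_d := \sup_{h\in I_3^\star}\tilde d(h)$, approached along a sequence $\{h_m\}\subseteq I_3^\star$ (so $\tilde b(h_m) = S(b)$, $\tilde a^k(h_m) = 0$) with $\tilde d(h_m) \to s_d$. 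For the $L$-contribution I combine both halves of Lemma~\ref{lem:seq-L} with DP.\ref{item:dp-condition-sup-L-vertical}: put $\gamma_L := L(b) - \sup\{\limsup_m \tilde b(h_m) : \{h_m\}\in\mathcal H_L\} > 0$; applying Lemma~\ref{lem:seq-L} to $b+\epsilon d$ and passing to a subsequence along which $\tilde d$ converges, DP.\ref{item:dp-condition-sup-L-vertical} rules out a limit of $\tilde b$ strictly below $L(b)$ once $\epsilon < \gamma_L/(2N)$, giving $L(b+\epsilon d) = L(b) + \epsilon \ell_d$ with $\ell_d := \sup\{\limsup_m \tilde d(h_m) : \{h_m\}\subseteq I_4,\ \tilde b(h_m)\to L(b),\ \sum_{k=\ell}^n|\tilde a^k(h_m)|\to 0\}$, and a diagonalization produces one sequence $\{h_m\}\subseteq I_4$ with $\tilde b(h_m)\to L(b)$, $\tilde a^k(h_m)\to 0$ for all $k$, and $\tilde d(h_m)\to\ell_d$. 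Taking the larger of the contributions active at $b$ yields $\hat\epsilon > 0$, a scalar $v_d$ (one of $s_d$, $\ell_d$, $\max\{s_d,\ell_d\}$), and a sequence $\{h_m\}$ lying entirely in $I_3$ or entirely in $I_4$ with $\tilde b(h_m)\to OV(b)$, $\tilde a^k(h_m)\to 0$ for $k=1,\dots,n$, $\tilde d(h_m)\to v_d$, and $OV(b+\epsilon d) = OV(b) + \epsilon v_d$ for all $\epsilon\in[0,\hat\epsilon]$. (These $S$-, $L$-, and combination arguments are the content of the appendix Lemmas~\ref{lem:conv-comb}--\ref{lemma:s-vertical-pricing}.)

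\textbf{Step 2 (build $\psi^*_d$ and conclude).} As in Theorems~\ref{theorem:silps-never-have-a-duality-gap} and~\ref{theorem:extend-SD-Y}, let $M := \{\tilde y \in \R^H : (\tilde y(h_m))_{m\in\N}\ \text{converges}\}$ and define the positive linear functional $\lambda(\tilde y) := \lim_m \tilde y(h_m)$ on $M$. Then $M \supseteq \overline{FM}(\spn(a^1,\dots,a^n,b,d))$ and, by Lemma~\ref{lemma:cute-little-trick}, $\lambda(\overline{FM}(a^k)) = c_k$, while $\lambda(\overline{FM}(b)) = OV(b)$ and $\lambda(\overline{FM}(d)) = v_d$. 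Because $\{h_m\}$ lies in $I_3$, or in $I_4$ with $\sum_{k=\ell}^n|\tilde a^k(h_m)|\to 0$, Lemma~\ref{lem:seq-S} (resp.\ Lemma~\ref{lem:seq-L}) gives $\lambda(\overline{FM}(y)) \le S(y) \le OV(y)$ (resp.\ $\le L(y) \le OV(y)$) for every $y$ in that span; thus $\lambda \le p$ on $\overline{FM}(\spn(a^1,\dots,a^n,b,d))$, where $p(\tilde y) := OV(\overline{FM}^{-1}(\tilde y))$ is the sublinear push-forward of $OV$ from Theorem~\ref{theorem:extend-SD-Y}. By Hahn--Banach, extend $\lambda$ to $\bar\lambda$ on $\overline{FM}(\hat Y)$ with $-p(-\tilde y) \le \bar\lambda(\tilde y) \le p(\tilde y)$; exactly the argument of Theorem~\ref{theorem:extend-SD-Y} (using $S(-y), L(-y) \le 0$ whenever $\overline{FM}(y) \ge 0$) shows $\bar\lambda$ is positive, so $\psi^*_d := \overline{FM}'(\bar\lambda)$ is a positive linear functional on $\hat Y$ with $\psi^*_d(a^k) = c_k$, $\psi^*_d(b) = OV(b)$, $\psi^*_d(d) = v_d$. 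Hence $\psi^*_d$ is an optimal solution of $(\text{DSILP}(\hat Y))$, and for all $\epsilon \in [0,\hat\epsilon]$, $OV(b+\epsilon d) = \psi^*_d(b+\epsilon d) = OV(b) + \epsilon\psi^*_d(d)$. Since $d \in \hat Y$ was arbitrary, (DP) holds.

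\textbf{The main obstacle} is Step~1 for the $L$-function. Unlike $S$, the quantities $L(b)$ and $L(b+\epsilon d)$ are defined through the limit $\delta \to \infty$ and register only along sequences in $I_4$ whose weights $\sum_{k=\ell}^n|\tilde a^k(h_m)|$ tend to $0$, so one cannot reduce to the pointwise statement available for $S$; DP.\ref{item:dp-condition-sup-L-vertical} is precisely the separation that prevents such sequences with $\limsup \tilde b < L(b)$ from interfering for small $\epsilon$, and a diagonal argument is needed to exhibit one sequence that simultaneously realizes $L(b)$, kills the weights, and converges in the $\tilde d$-coordinate to $\ell_d$. Stitching the $S$- and $L$-analyses together in the borderline case $S(b) = L(b) = OV(b)$ and extracting a single $\hat\epsilon > 0$ is the remaining, more routine, bookkeeping.
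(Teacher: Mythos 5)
Your proposal is correct and follows essentially the same route as the paper: a case analysis on whether $S$ or $L$ governs $OV(b+\epsilon d)$ for small $\epsilon$, the construction of a single index sequence in $I_3$ or $I_4$ that simultaneously realizes $OV(b)$, kills the $\tilde a^k$, and converges in the $\tilde d$-coordinate (this is exactly the content of the appendix Lemmas~\ref{lem:conv-comb}--\ref{lemma:s-vertical-pricing}, which you correctly identify as carrying the main burden, in particular the separation supplied by DP.\ref{item:dp-condition-sup-L-vertical} and the diagonal/subsequence extraction for the $L$-case), followed by the Hahn--Banach extension against the sublinear majorant $p$ and the adjoint $\overline{FM}'$ as in Theorem~\ref{theorem:extend-SD-Y}. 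The only cosmetic difference is that you package the conclusion as explicit affine formulas $S(b+\epsilon d)=S(b)+\epsilon s_d$ and $L(b+\epsilon d)=L(b)+\epsilon \ell_d$, whereas the paper phrases the same facts via Lemma~\ref{lem:conv-comb}.
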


\begin{proof}

Assume $d \in \hat{Y}$ is a perturbation vector such that $b + d$ is feasible. We show  there exists an optimal dual solution $\psi^{*}$ to \eqref{eq:DSILPprime} and an $\hat \epsilon > 0$ such that
\begin{align*}
OV(b + \epsilon d) = \psi^{*}(b + \epsilon d) = OV(b) + \epsilon \psi^{*}(d)
\end{align*}
for all $\epsilon \in [0, \hat \epsilon].$ There are several cases to consider. 

\noindent \underline{\em Case 1: $L(b)  >  S(b)$.} By hypothesis $\overline{FM}(d) = \tilde{d} \in \ell_{\infty}(H)$ and this implies $\sup_{h \in I_{3}} | \tilde{d}(h) | < \infty.$ Thus, $S(d) < \infty$. Then $L(b)  >  S(b)$ implies  there exists an $\epsilon_{1} > 0$ such that $L(b) > S(b) + \epsilon S(d)$ for all $\epsilon \in [0, \epsilon_{1}]$. However, by Lemma~\ref{lem:convex-L}, $S(y)$ is a sublinear function of $y$ so $S(b) + \epsilon S(d) \ge S(b + \epsilon d).$ Define $\beta := \min_{\epsilon\in[0,\epsilon_1]}L(b) - S(b+ \epsilon d)\geq \min_{\epsilon\in[0,\epsilon_1]} L(b) - S(b) - \epsilon S(d)$. Since the function $L(b) - S(b) - \epsilon S(d)$ is linear and it is strictly positive at the end points of $[0,\epsilon_1]$, this implies $\beta > 0$.

Again, $\tilde{d} \in \ell_{\infty}(H)$ implies the existence of $\epsilon_{2} > 0$ such that $ \epsilon_{2} \sup_{h \in I_{4}} | \tilde{d}(h) | < \beta/2.$   Let $\epsilon_{3} = \min\{\epsilon_{1}, \epsilon_{2} \}.$  Then for all $\epsilon \in [0, \epsilon_{3}]$
\begin{eqnarray*}
\begin{array}{rcl} 
L(b + \epsilon d) & = &\lim_{\delta\to\infty}\sup \{\tilde{b}(h) + \epsilon \tilde d(h)- \delta \sum_{k=\ell}^{n} |\tilde{a}^k(h)| \, : \, h \in I_4 \} \\ &\geq &\lim_{\delta\to\infty}\sup \{\tilde{b}(h) - \frac\beta2 - \delta \sum_{k=\ell}^{n} |\tilde{a}^k(h)| \, : \, h \in I_4 \} \\ & = & \lim_{\delta\to\infty}\sup \{\tilde{b}(h)  - \delta \sum_{k=\ell}^{n} |\tilde{a}^k(h)| \, : \, h \in I_4 \} - \frac\beta2 \\ & = & L(b) -\frac\beta2 \\
&>&  S(b + \epsilon d).
\end{array}
\end{eqnarray*}
A similar argument gives $L(b + \epsilon d) < L(b) + \frac\beta2$ so  $L(b + \epsilon d) < \infty.$

 By hypothesis \eqref{eq:SILP} is  feasible  so by Theorem~\ref{theorem:fm-primal-value}, $OV(b) = \max\{S(b), L(b)\}.$  Then  $L(b)  >  S(b)$ implies $L(b) > -\infty.$  Thus $-\infty <  L(b), L(b + \epsilon_{3} d) < \infty.$  Thus, the hypotheses of Lemma~\ref{lemma:L-vertical-pricing} hold.   Now apply Lemma~\ref{lemma:L-vertical-pricing} and observe there is a $\hat{\epsilon}$ which we can take to be less than $\epsilon_{3}$ and a sequence $\{h_m\}_{m\in \N} \subseteq I_4$ such that for all $\epsilon \in [0,\hat \epsilon]$ 
\begin{align*}
\tilde b(h_m) \to L(b),  \tilde d_{\epsilon} (h_m) \to L(b + \epsilon d),  \text{ and } \sum_{k=\ell}^{n} |\tilde{a}^k(h_m)| \to 0
\end{align*}
where $\tilde d_{\epsilon} = \overline{FM}(b + \epsilon d)$.

We have also shown  for all $\epsilon \in [0, \epsilon_{3}],$ $L(b + \epsilon d) > S(b + \epsilon d).$ Then by Theorem~\ref{theorem:fm-primal-value} $OV(b + \epsilon d) = L(b + \epsilon d).$  Using the sequence $\{h_m\}_{m\in \N} \subseteq I_4$  define the linear functional $\lambda$ as in~\eqref{eq:projected-system-linear-functional}.  Then extend this linear functional as in Theorem~\ref{theorem:extend-SD-Y} and use the adjoint of the $\overline{FM}$ operator to get the linear functional $\psi^{*}$ with the property that  $OV(b + \epsilon d) = \psi^{*}(b + \epsilon d)$ for all $\epsilon \in [0, \hat{\epsilon}].$

\vskip 5pt
\noindent \underline{\em Case 2: $S(b)  > L(b)$.} This case follows the same proof technique as in the $L(b) > S(b)$ case but invoke  
Lemma~\ref{lemma:s-vertical-pricing}
instead of Lemma~\ref{lemma:L-vertical-pricing}.

\vskip 5pt
\noindent \underline{\em Case 3: $S(b) = L(b)$.} By Lemma~\ref{lemma:L-vertical-pricing}   there exists $\hat \epsilon_{L} > 0$ and a sequence $\{h_m\}_{m\in \N} \subseteq I_4$ such that for all $\epsilon \in [0,\hat \epsilon_{L}]$   
\begin{align*}
\tilde d_{\epsilon} (h_m) \to L(b + \epsilon d),  \text{ and } \sum_{k=\ell}^{n} |\tilde{a}^k(h_m)| \to 0
\end{align*}
where $\tilde d_{\epsilon} = \overline{FM}(b + \epsilon d)$.  

Likewise by Lemma~\ref{lemma:s-vertical-pricing} 
there exists $\hat \epsilon_{S} > 0$ and a sequence $\{g_m\}_{m\in \N} \subseteq I_3$ such that for all $\epsilon \in [0,\hat \epsilon_{S}]$
\begin{align*}
  \tilde d_{\epsilon} (g_m) \to S(b + \epsilon d)\end{align*}
where $\tilde d_{\epsilon} = \overline{FM}(b + \epsilon d)$.

Now let $\hat \epsilon = \min \{ \hat \epsilon_{L},  \hat \epsilon_{S} \}.$ By Lemma~\ref{lem:conv-comb}, for all $\epsilon \in (0, \hat \epsilon]$, $S(b+\epsilon d)$ and $L(b + \epsilon d)$ are the same convex combinations of $S(b), S(b + \hat \epsilon d)$ and $L(b), L(b + \hat \epsilon d)$ respectively. There are now three possibilities. 
First, if $S(b + \hat \epsilon d) = L(b +  \hat \epsilon d)$ then $S(b +  \epsilon d) = L(b +   \epsilon d)$ for all $\epsilon \in (0, \hat \epsilon]$ and we have alternative optimal dual linear functionals generated from the $\{g_{m}\}$ and $\{h_{m}\}$ sequences.   
Second, if $S(b + \hat \epsilon d) >  L(b + \hat \epsilon d)$ then $S(b +  \epsilon d) > L(b +   \epsilon d)$ for all $\epsilon \in (0, \hat \epsilon]$ and  the  dual linear functional generated from the $\{g_{m}\}$   sequence will satisfy the dual pricing property.
Third, if $S(b + \hat \epsilon d) <  L(b + \hat  \epsilon d)$ then $S(b +  \epsilon d) < L(b +   \epsilon d)$ for all $\epsilon \in (0, \hat \epsilon]$ and    the  dual linear functional generated from the $\{h_{m}\}$   sequence will satisfy the dual pricing property. 
\end{proof}

The following two examples illustrate that neither of DP.\ref{item:dp-condition-sup-S-vertical} nor DP.\ref{item:dp-condition-sup-L-vertical} are redundant conditions.

\begin{example}[Example~\ref{example:karney-modified}]\label{ex:karney-modified-continued}
 Example~\ref{example:karney-modified} did not have the (DP) property.  Recall for this example that $OV(b) = S(b) = 0.$  Consider the projected system \eqref{eq:karney-modified-projected}.    Condition DP.\ref{item:dp-condition-sup-L-vertical} is satisfied vacuously since $I_4 = \emptyset$. However, Condition DP.\ref{item:dp-condition-sup-S-vertical}  does not hold because  $-1/i(1+i) < 0 = S(b),$ for $i = 5, 6, \dots,$  but the supremum over all $i$ is zero.  That is,  $\sup \{ \tilde{b}(h) \, : \,  h \in I_{3}  \text{ and } \tilde{b}(h) < 0 \} = 0 = S(b).$  See the comments in Remark~\ref{remark:thoughts-on-DP-1-DP-2}. \quad $\triangleleft$ 
\end{example}

\begin{example}\label{example:sufficient-conditions-vertical-dual-pricing} 
Consider the following \eqref{eq:SILP}
\begin{eqnarray*}
\inf x_{1} && \\
 x_{1} + \frac{1}{m + n}x_{2} &\ge& -\frac{1}{n^{2}}, \quad (m, n) \in I
\end{eqnarray*}
whose constraints are indexed by $I = \{ (m,n) \, : \, (m,n) \in \N \times \N \}.$ Putting into standard form gives
\begin{eqnarray*}
\inf z && \\
  z - x_{1} &\ge& 0\\
x_{1} + \frac{1}{m + n}x_{2} &\ge& -\frac{1}{n^{2}}, \quad (m, n) \in I.
\end{eqnarray*}
Apply Fourier-Motzkin elimination, observe $H = I_{4} = I,$ and obtain
\begin{eqnarray*}
\inf z && \\
z+ \frac{1}{m + n}x_{2} &\ge& -\frac{1}{n^{2}}, \quad (m, n) \in I_{4}.
\end{eqnarray*}
In this case $I_{3} = \emptyset$ so DP.\ref{item:dp-condition-sup-S-vertical} holds vacuously.  We show that DP.\ref{item:dp-condition-sup-L-vertical} fails to hold for this example and that property (DP) does not hold.

In our notation, for an arbitrary but fixed  $\bar n \in \N,$ there are subsequences 
\begin{eqnarray*}
\{\tilde{b}(m, \bar n)\}_{m \in \N} = \{-\frac{1}{\bar{n}^{2}} \}_{m \in \N}\to -\frac{1}{\bar{n}^{2}}, \qquad \{\tilde{a}(m,\bar n)\}_{m \in \N} = \{\frac{1}{m + \bar n}\}_{m \in \N} \to 0.
\end{eqnarray*}

Likewise, for an arbitrary but fixed  $\bar m \in \N,$ there are subsequences 
\begin{eqnarray*}
\{\tilde{b}(\bar m, n)\}_{n \in \N} = \{-\frac{1}{n^{2}}\}_{n \in \N} \to 0, \qquad \{\tilde{a}(\bar m,n)\}_{n \in \N} = \{\frac{1}{\bar m + n}\}_{n \in \N} \to 0.
\end{eqnarray*}

{\bf Claim 1:} An optimal primal solution is $x_{1} =  x_{2} = 0$ with optimal value $z = 0.$  Clearly  $x_{1} = x_{2} = 0$ is a primal feasible solution with objective function value 0 since the right-hand-side vector is negative. Now we argue that the optimal objective value cannot be negative.   In this example only $I_{4}$ is nonempty so $S(b) = -\infty$  and it suffices to show $L(b) = 0.$ In this example, 
\begin{eqnarray*}
\omega(\delta, b) = \sup \left\{ -\frac{1}{n^{2}}  -\frac{\delta}{m + n} \, : \, (m, n) \in I_{4} = \N \times \N  \right\} \le 0.
\end{eqnarray*}
For any subsequence of $\{(m, n)\} \in I_{4},$  $-\frac{\delta}{m + n} \to 0.$  Since $\{-\frac{1}{n^{2}}\}_{n \in \N} \to 0$ for each $m,$ by Lemma~\ref{lem:seq-L} we have $L(b) = 0.$  $\dagger$

\vskip 7pt
We consider perturbation vector $d(m,n) = \tilde{d}(m,n)  = \frac{1}{n}$ for all $(m, n) \in I_{4}.$
\vskip 7pt

{\bf Claim 2:}  For  all $n \in \N$, $L(b +  \frac{2}{n} d) =  \frac{(2/n)^{2}}{4} = \frac{1}{n^2}.$
For a fixed $\hat n \in \N,$ consider the subsequence $\{ m, \hat{n} \}_{m \in \N}$ of  $I_4$ where 
\begin{eqnarray*}
\{\tilde{b}(m, \hat n) + \frac{2}{\hat n} \tilde{d}(m, \hat n) \}_{m \in \N} = \{ -\frac{1}{\hat{n}^{2}} + \frac{2}{\hat n} \frac{1}{\hat n}  \}_{m \in \N} = \{ \frac{1}{\hat n^2} \}_{m \in \N}.
\end{eqnarray*}
Then since $\{(m, \hat n)\} \in I_{4}$ for all $m \in \N$,  $\frac{1}{m + \hat n} \to 0$ as $m \to \infty$, by Lemma~\ref{lem:seq-L}, $L(b +  \frac{2}{\hat n}  d) \ge  \frac{1}{\hat n^2}.$  Now show this is an equality by showing it is the best possible limit value of any sequence.

The maximum value of $\{\tilde{b}(m, n) + \frac{2}{\hat n} \tilde{d}(m, n) \}_{(m,n) \in \N \times \N}$ is given by 
\begin{eqnarray*}
\max_{n} \left(-\frac{1}{n^{2}} + \frac{2}{\hat n n}  \right),
\end{eqnarray*}
which, using simple Calculus, is achieved for $n = \hat n$. This shows that $\tilde{b}(m, n) + \frac{2}{\hat n} \tilde{d}(m, n) \le \frac{1}{\hat n^2}$ for all $(m,n) \in \N \times \N$. From Lemma~\ref{lem:seq-L}, $L(b + \frac{2}{\hat n} d)$ is the limit of some subsequence of elements in $\{\tilde{b}(m, n) + \frac{2}{\hat n} \tilde{d}(m, n) \}_{(m,n) \in \N \times \N}$. Since each element is less than $\frac{1}{\hat n^2}$, $L(b + \frac{2}{\hat n} d) \le \frac{1}{\hat n^2}$. This implies that $L(b +  \frac{2}{\hat n}  d) =  \frac{1}{\hat n^2}.$

\vskip 7pt

{\bf Claim 3:} For this perturbation vector $d,$  there is no dual solution $\psi$ and an $\hat \epsilon > 0$ such that 
\begin{eqnarray*}
OV(b + \epsilon d) = L( b + \epsilon d) = \psi(b + \epsilon d)
\end{eqnarray*}
for all $\epsilon \in [0, \hat \epsilon].$  Assume such a $\psi$ and $\hat \epsilon > 0 $ exists. Consider any $\hat n$ such that $\frac{2}{\hat n} < \hat \epsilon$. By Claim 2, $L(b + \frac{2}{\hat n} d) = \frac{1}{\hat n^2}$, but by the linearity of $\psi$, $\psi(b + \frac{2}{\hat n} d) = \psi(b) + \frac{2}{\hat n} \psi(d)$. Then  $L(b + \frac{2}{\hat n} d) = \psi(b + \frac{2}{\hat n} d)$ implies $\frac{1}{\hat n^2} = \psi(b) + \frac{2}{\hat n} \psi(d)$ for all $\hat n$ such that $\frac{2}{\hat n} < \hat \epsilon$. By Claim 1, $L(b) = 0$ so $\psi(b) =  0.$ Then $\frac{1}{\hat n} = 2\psi(d)$ for all $\hat n$ such that $\frac{2}{\hat n} < \hat \epsilon$. However  $\psi(d)$ is a fixed number and cannot vary with $\hat{n}.$ This is a contradiction and (DP) fails.   \quad $\triangleleft$ 
\end{example}

In \cite{goberna2007sensitivity}, Goberna et al. give sufficient conditions for a dual pricing property.   They use the notation
\begin{eqnarray*}
T(x) := \{  i \in I \, : \,  \sum_{k=1}^{n} a^{k}(i) x = b(i) \} \\
A(x) := \cone\{ (a^{1}(i), \ldots, a^{k}(i)) \, : \, i \in T(x)  \}.
\end{eqnarray*}
Their main results for right-hand-side sensitivity analysis appear as Theorem 4 in~\cite{goberna2007sensitivity} and again as Theorem 4.2.1 in~\cite{goberna2014post}.   In this theorem a key hypothesis (hypothesis (i.a) in the statement of Theorem 4 in~\cite{goberna2007sensitivity}) is that $c \in A(x^{*})$ where $x^{*}$ is a feasible solution to~\eqref{eq:SILP}. We show in Theorem~\ref{theorem:goberna-sensitivity} below that in our terminology (i.a) implies $S(b) \ge L(b)$ and both primal and dual solvability.

\begin{theorem}\label{theorem:goberna-sensitivity}
If~\eqref{eq:SILP} has a feasible solution $x^{*}$ and $c \in A(x^{*})$ then:  (i)  $S(b) \ge L(b),$ (ii)  $S(b) = \sup_{h \in I_{3}}\{ \tilde{b}(h) \}$ is realized, and (iii) $x^{*}$ is an optimal primal solution.
\end{theorem}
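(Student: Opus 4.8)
The plan is to unpack the hypothesis $c \in A(x^*)$ using the Fourier--Motzkin machinery. Recall $A(x^*) = \cone\{(a^1(i),\dots,a^n(i)) : i \in T(x^*)\}$, so $c \in A(x^*)$ means there is a finite-support nonnegative vector $v \in \R^{(I)}_+$ supported on active indices $T(x^*)$ with $\sum_i a^k(i) v(i) = c_k$ for $k=1,\dots,n$. First I would observe that, since $v$ is supported on $T(x^*)$, we also have $\sum_i b(i) v(i) = \sum_i \big(\sum_k a^k(i) x^*_k\big) v(i) = \sum_k c_k x^*_k$; call this common value $\gamma$. The vector $v$ is a finite-support dual feasible solution to \eqref{eq:FDSILP} with objective value $\gamma$, so by weak duality $\gamma \le OV(b)$. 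On the other hand $x^*$ is primal feasible with objective value $\gamma$, so $OV(b) \le \gamma$. Hence $OV(b) = \gamma$, which simultaneously gives (iii) ($x^*$ is primal optimal) and shows that \eqref{eq:FDSILP} attains the value $OV(b)$ — i.e. there is no duality gap with the finite support dual. By Theorem~\ref{theorem:zero-duality-gap} (the ``only if'' direction), this forces $S(b) \ge L(b)$, which is (i).

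For (ii), I would argue that the realization of $S(b)$ as a maximum is essentially forced by tracing the finite-support certificate $v$ through the Fourier--Motzkin elimination. The finite-support optimal dual $v$, interpreted through the $FM$ operator and the multipliers $\{u^h\}$, must ``concentrate'' on some index $h \in I_3$: the combined multiplier on the objective row $z - \sum c_k x_k \ge 0$ plus the contribution from $v$ yields, after elimination of all $x_k$, an inequality of the form $z \ge \tilde b(h)$ for some $h \in I_3$ with $\tilde b(h) = \gamma = OV(b) = \max\{S(b),L(b)\} = S(b)$ (using (i)). More carefully, one can combine the objective inequality with the constraints using $v$ as multipliers; because all $x_k$ are eliminated (the coefficients cancel since $\sum_i a^k(i)v(i) = c_k$ matches the objective-row coefficient), the resulting valid inequality is exactly $z \ge \sum_i b(i) v(i) = \gamma$, and this corresponds to a row indexed by some $h_0 \in I_3$ with $\tilde b(h_0) = \gamma = S(b)$. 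Hence the supremum defining $S(b) = \sup_{h \in I_3} \tilde b(h)$ is attained at $h_0$.

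The main obstacle I anticipate is the bookkeeping in (ii): carefully identifying the particular $h_0 \in I_3$ produced by the combined multiplier $(1, v) \in \R^{\{0\}\cup I}_+$ (objective row weight $1$, constraint weights $v$) and verifying it lands in $I_3$ rather than $I_4$. The key point is that after eliminating $x_1,\dots,x_n$ the coefficient vector on the $x_k$'s in this combination is zero — precisely because $v$ realizes $c$ as a conic combination of the columns restricted to active rows — so the combination has the form $z \ge (\text{const})$, which by the structure of \eqref{eq:J_system} is an $I_3$-type row. One must check that the Fourier--Motzkin procedure, which produces \emph{all} such combinations (Theorem~5 of \cite{Goberna2010209} / Theorem~2 of \cite{basu2013projection} guarantees \eqref{eq:J_system} is the projection), indeed includes this one, so that the corresponding $h_0 \in I_3$ has $\tilde b(h_0) \ge \gamma$; combined with $\tilde b(h) \le S(b) \le OV(b) = \gamma$ for all $h \in I_3$ this pins down $\tilde b(h_0) = S(b)$ and gives attainment. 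Parts (i) and (iii) are then the easy consequences of weak duality and Theorem~\ref{theorem:zero-duality-gap} described above.
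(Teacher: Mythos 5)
Your derivation of (i) and (iii) is correct and in fact a bit slicker than the paper's: you get $OV(b)=\gamma=\sum_k c_k x_k^*$ by sandwiching with weak duality against \eqref{eq:FDSILP}, and then (i) follows from the ``only if'' direction of Theorem~\ref{theorem:zero-duality-gap}. The paper instead gets all three parts at once from a single chain of equalities, but your route for (i) and (iii) is sound.

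The problem is part (ii), and it is exactly the step you flag as ``the main obstacle'': you need some $h_0 \in I_3$ with $\tilde b(h_0) \ge \gamma$, and your justification --- that Fourier--Motzkin ``produces all such combinations'' because \eqref{eq:J_system} is the projection of the original system --- does not deliver this. The projection theorem only says the two systems have the same feasible set in the remaining variables; it does not say that every finite-support nonnegative aggregation of the original rows that eliminates $x_1,\dots,x_n$ appears as, or is dominated by, a single row of $I_3$. In particular, knowing that $z \ge \gamma$ is valid for the projected system is compatible a priori with the $I_3$ rows only certifying $z \ge S(b)$ for some $S(b)<\gamma$ (with the slack made up by $I_4$), and even after you use Theorem~\ref{theorem:zero-duality-gap} to force $S(b)=\gamma$, you still have no attainment: $\sup_{h\in I_3}\tilde b(h)=\gamma$ could be a non-attained supremum. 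The missing ingredient is precisely Lemma~5 of Basu et al.~\cite{basu2013projection}, which the paper invokes: for any finite-support dual feasible $\bar v$ there exists $\bar h \in I_3$ with $\tilde b(\bar h) \ge \sum_i \bar v(i) b(i)$, and moreover the support of the multiplier $u^{\bar h}$ is contained in the support of $\bar v$. With that lemma in hand your sandwich $\gamma \le \tilde b(h_0) \le S(b) \le OV(b) = \gamma$ closes the argument (you do not even need the support-containment clause, which the paper uses to derive the whole chain of equalities directly); without it, or an equivalent structural argument about the FM multipliers, (ii) is not proved.
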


\begin{proof}
If $c \in A(x^{*})$ then there exists $\bar{v} \ge 0$ with finite support contained in $T(x^{*})$  such that $\sum_{i \in I} \bar{v}(i) a^{k}(i) = c_{k}$ for $k = 1, \ldots, n.$  By hypothesis, $x^{*}$ is a feasible solution to~\eqref{eq:SILP} and it follows from Theorem 6 in Basu et al.~\cite{basu2013projection} that   $\tilde{b}(h) \le 0$ for all $h \in I_{1}.$  Then by Lemma 5 in the same paper there exists $\bar{h} \in I_{3}$ such that $\tilde{b}(\bar{h}) \ge \sum_{i \in I} \bar{v}(i) b(i).$  More importantly,  the support of $\bar{h}$ is a subset of the support of $\bar{v}.$  Then the support of $\bar{h}$ is contained in $T(x^{*})$ since $\bar{v}_{i} > 0$ implies $i \in T(x^{*}).$  Then for this $\bar{h},$  $v^{\bar{h}}(i) > 0$ for only those $i \in I$ for which constraint $i$ is tight.  Then we aggregate the tight constraints in~\eqref{eq:initial-system-obj-con}-\eqref{eq:initial-system-con} associated with the support of $\bar{h}$ and observe
\begin{eqnarray}\label{eq:goberna-sensitivity}
z = \sum_{k=1}^{n} c_{k} x_{k}^{*} = \sum_{i \in I} v^{\bar{h}}(i) b(i) = \tilde{b}(\bar{h}).  
\end{eqnarray}
It follows from~\eqref{eq:goberna-sensitivity} that $x^{*}$ is an optimal primal solution and $v^{\bar{h}}$ is an optimal dual solution and (i)-(iii) follow.
\end{proof}

The following example satisfies (DP) but (iii) of Theorem~\ref{theorem:goberna-sensitivity} fails to hold since the primal is not solvable.

\begin{example}[Example 3.5 in \cite{basu2014sufficiency}]\label{ex:drop-primal-solvability}
Consider the  \eqref{eq:SILP}
\begin{align}\label{eq:not-primal-optimal}
\begin{array}{rcl}
\inf x_{1} \phantom{+ \tfrac{1}{i^{2}}x_{2} \ } && \\
\phantom{\inf } x_{1} + \tfrac{1}{i^{2}}x_{2} &\ge& \tfrac{2}{i}, \quad i \in \N. 
\end{array}
\end{align}
with constraint space taken to be $\ell_\infty$. We apply the Fourier-Motzkin elimination procedure by putting \eqref{eq:not-primal-optimal} into standard form to yield
\begin{align*}
\begin{array}{rcl}
z - x_{1} \phantom{+ \tfrac{1}{i^{2}}x_{2} \ } &\ge & 0 \\
\phantom{z - } x_{1} + \tfrac{1}{i^{2}}x_{2} &\ge& \tfrac{2}{i}, \quad i \in \N .
\end{array}
\end{align*}
Eliminating $x_1$ gives the projected system:
\begin{align*}
z + \tfrac{1}{i^{2}}x_{2} \ge \tfrac{2}{i}, \quad i \in \N.
\end{align*}
Observe that $H = \N = I_4$ and $x_2$ cannot be eliminated. Since $I_3 = \emptyset$, $S(b) = -\infty$ and so by Theorem~\ref{theorem:fm-primal-value} the optimal value of \eqref{eq:not-primal-optimal} is $L(b)$. Recall that $L(b) = \lim_{\delta \to \infty} \omega(\delta, b)$ where $\omega(\delta, b) = \sup_{i \in \N} \left\{ \tfrac{2}{i} - \tfrac{1}{i^{2}} \delta \right\} \leq \frac{1}{\delta}$, where the inequality was shown in~\cite{basu2014sufficiency}. Also, for a fixed $\delta \geq 0$, $\sup_{i \in \N} \left\{ \tfrac{2}{i} - \tfrac{1}{i^{2}} \delta \right\} \geq 0$ and so $\omega(\delta, b) \geq 0$ for all $\delta\geq 0$. Hence, $0\leq L(b) = \lim_{\delta\to\infty} \omega(\delta, b) \leq \lim_{\delta \to \infty} \tfrac{1}{\delta} = 0$. This implies $L(b) = 0$. However,  the optimal objective value is never attained, since there is no feasible solution with $x_1 = 0.$ 

Next we show that (DP) holds. DP.\ref{item:dp-condition-sup-S-vertical} holds vacuously since $I_3 = \emptyset$.  Also,  DP.\ref{item:dp-condition-sup-L-vertical} holds vacuously because $L(b) = 0$ and $\tilde{b(h)} > 0$ for all $h \in I_{4}.$ 

Observe also that the $FM$ linear operator maps $\ell_{\infty}(\{ 0 \} \cup \N )$ into $\ell_{\infty}(\N)$. To see that this is the case observe that all of the multiplier vectors have exactly two nonzero components and both components are $+1.$ Thus, applying the $FM$ operator to any vector in $\ell_{\infty}(\{ 0 \} \cup \N)$ produces another vector in $\ell_{\infty}(\N)$ since adding any two bounded components produces bounded components. Hence we can apply Theorem~\ref{theorem:sufficient-conditions-dual-pricing-alt} to conclude \eqref{eq:not-primal-optimal} satisfies (DP). \quad $\triangleleft$ 
\end{example}

\section{Conclusion}

This paper explores important duality properties of semi-infinite linear programs over a spectrum of constraint and dual spaces. Our flexibility to different choices of constraint spaces provides insight into how properties of a problem can change when considering difference spaces for perturbations. In particular, we show that \emph{every} SILP satisfies (SD) and (DP) in a very restricted constraint space $U$ and provide sufficient conditions for when (SD) and (DP) hold in larger spaces. 

The ability to perform senstivity analysis is critical for any practical implementation of a semi-infinite linear program because of the uncertainty in data in real life problems. However, there is another common use of (DP). In finite linear programming optimal dual solutions correspond to ``shadow prices'' with economic meaning regarding the marginal value of each individual resource. These marginal values can help govern investment and planning decisions. 

The use of dual solutions as shadow prices  poses difficulties in the case of semi-infinite programming. Indeed, it is not difficult to show Example~\ref{ex:drop-primal-solvability} has a unique optimal dual solution over the constraint space $\mathfrak c$ -- namely, the limit functional $\psi_{0 \oplus 1}$ (the argument for why this is the case is similar to that of Example~\ref{example:karney-modified} and thus omitted). Since (DP) holds in Example~\ref{ex:drop-primal-solvability} this means there is a optimal dual solution that satisfies \eqref{eq:dual-pricing} for every feasible perturbation. This is a desirable result. However, interpreting the limit functional as assigning a ``shadow price" in the standard way is problematic. Under the limit functional the marginal value for each individual resource (and indeed any finite bundle of resources) is zero, but infinite bundles of resources may have positive marginal value.  This makes it difficult to interpret this dual solution as assigning economically meaningful shadow prices to individual constraints. 

In a future work we aim to uncover the mechanism by which such undesirable dual solutions arise and explore ways to avoid such complications. This direction draws inspiration from earlier work by Ponstein \cite{ponstein1981use} on countably infinite linear programs.

\appendix

%%%%%%%%%%%%%%%%%%%%
%%%%%%%%%%%%%%%%%%%%
% appendix.tex
%%%%%%%%%%%%%%%%%%%%
%%%%%%%%%%%%%%%%%%%%

\section{Appendix}\label{s:appendix}

This appendix contains three technical lemmas used in the proof of Theorem~\ref{theorem:sufficient-conditions-dual-pricing-alt}.

\begin{lemma}\label{lem:conv-comb}
Let $b^1, b^2 \in \R^I$ and $\tilde b^1 = \overline{FM}(b^1)$ and $\tilde b^2 = \overline{FM}(b^2)$. Suppose $\{h_m\}_{m\in \N}$ is a sequence in $I_4$ such that $\lim_{m\to\infty} \tilde b^j(h_m) = L(b^j)$ for $j=1,2$ and $\lim_{m\to\infty} \sum_{k=\ell}^n |\tilde a^k(h_m)| \to 0$. Then for every $\lambda \in [0,1]$, $b_\lambda := \lambda b^1 + (1-\lambda)b^2$ has the property that $$\lim_{m\to\infty} \tilde b_\lambda(h_m) = L(b_\lambda) = \lambda L(b_1) + (1-\lambda)L(b_2),$$ where $ \tilde b_\lambda = \overline{FM}(b_\lambda)$. 

Moreover, suppose $\{h_m\}_{m\in \N}$ is a sequence in $I_3$ such that $\lim_{m\to\infty} \tilde b^j(h_m) = S(b^j)$ for $j=1,2$. Then for every $\lambda \in [0,1]$, $b_\lambda := \lambda b^1 + (1-\lambda)b^2$ has the property that $$\lim_{m\to\infty} \tilde b_\lambda(h_m) = S(b_\lambda) = \lambda S(b_1) + (1-\lambda)S(b_2),$$ where $ \tilde b_\lambda = \overline{FM}(b_\lambda)$. 
\end{lemma}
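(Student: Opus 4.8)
The plan is to leverage three ingredients already in hand: the linearity of $\overline{FM}$, the sublinearity of $L$ and $S$ from Lemma~\ref{lem:convex-L}, and the ``supremum is approached along admissible sequences'' statements in Lemmas~\ref{lem:seq-L} and~\ref{lem:seq-S}. The two halves of the lemma are perfectly parallel, so I would prove the $L$-statement in full and then indicate the (trivial) changes for the $S$-statement.

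First I would note that since $\overline{FM}$ is linear, $\tilde b_\lambda = \lambda \tilde b^1 + (1-\lambda)\tilde b^2$, and hence $\tilde b_\lambda(h_m) = \lambda \tilde b^1(h_m) + (1-\lambda)\tilde b^2(h_m)$ for every $m$. Both sequences $\{\tilde b^1(h_m)\}$ and $\{\tilde b^2(h_m)\}$ converge, to $L(b^1)$ and $L(b^2)$ respectively, so their convex combination converges and $\lim_{m\to\infty}\tilde b_\lambda(h_m) = \lambda L(b^1) + (1-\lambda)L(b^2)$; this value is finite since $L(b^1)$ and $L(b^2)$ are. It then remains only to identify this common limit with $L(b_\lambda)$. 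For the inequality $\lambda L(b^1) + (1-\lambda)L(b^2) \le L(b_\lambda)$, I would apply the first part of Lemma~\ref{lem:seq-L} with $y = b_\lambda$: the indices $h_m$ lie in $I_4$, the sequence $\{\tilde b_\lambda(h_m)\}$ is convergent, and $\sum_{k=\ell}^n |\tilde a^k(h_m)| \to 0$ by hypothesis, so the lemma gives $\lim_{m\to\infty}\tilde b_\lambda(h_m) \le L(b_\lambda)$. For the reverse inequality I would invoke the sublinearity of $L$ (Lemma~\ref{lem:convex-L}): for $\lambda \in (0,1)$, subadditivity and positive homogeneity give $L(b_\lambda) \le L(\lambda b^1) + L((1-\lambda)b^2) = \lambda L(b^1) + (1-\lambda)L(b^2)$, and the cases $\lambda \in \{0,1\}$ are immediate. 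Combining the two inequalities yields $L(b_\lambda) = \lambda L(b^1) + (1-\lambda)L(b^2) = \lim_{m\to\infty}\tilde b_\lambda(h_m)$, as desired.

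For the $S$-statement the same three steps go through verbatim, with $I_3$ replacing $I_4$, the first part of Lemma~\ref{lem:seq-S} (which carries no requirement on $\sum_{k}|\tilde a^k(h_m)|$) replacing Lemma~\ref{lem:seq-L}, and the sublinearity of $S$ replacing that of $L$. I do not anticipate a genuine obstacle here; the only point deserving a moment's care is the potential infiniteness of $L$ (or $S$), but the finiteness of the hypothesized limits $L(b^1), L(b^2)$ (resp. $S(b^1), S(b^2)$) already sandwiches $L(b_\lambda)$ (resp. $S(b_\lambda)$) between two finite quantities, so finiteness of the target value is automatic and no separate case analysis is needed.
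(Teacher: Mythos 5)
Your proposal is correct and follows essentially the same route as the paper's proof: both combine the linearity of $\overline{FM}$, the convexity/sublinearity of $L$ and $S$ from Lemma~\ref{lem:convex-L}, and the upper bounds on limits of admissible sequences from Lemmas~\ref{lem:seq-L} and~\ref{lem:seq-S}. The only difference is presentational --- the paper chains the two inequalities into a single cycle beginning and ending at $L(b_\lambda)$ and concludes that all must be equalities, while you state the two inequalities separately --- so there is nothing substantive to add.
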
  
\begin{proof}
By Lemma~\ref{lem:convex-L} $L$ is sublinear and therefore convex which implies
\begin{align*}
\begin{array}{rcl}
L(b_\lambda) &\leq &\lambda L(b^1) + (1-\lambda)L(b^2) \\
& = & \lambda\lim_{m\to\infty} \tilde b^1(h_m) + (1-\lambda)\lim_{m\to\infty} \tilde b^2(h_m) \\
& = & \lim_{m\to\infty} (\lambda\tilde b^1(h_m) + (1-\lambda)\tilde b^2(h_m))\\
& \leq & L(\lambda b^1 + (1-\lambda)b^2)\\
& = & L(b_\lambda)
\end{array}
\end{align*}
where the second inequality follows from Lemma~\ref{lem:seq-L}.

Thus, all the inequalities in the above are actually equalities. In particular, $\lim_{m\to\infty} (\lambda\tilde b^1(h_m) + (1-\lambda)b^2(h_m)) = L(b_\lambda) = \lambda L(b_1) + (1-\lambda)L(b_2)$.  Since $\overline{FM}$ is a linear operator,  $\overline{FM}(b_\lambda) = \lambda \overline{FM}(b^1) + (1-\lambda) \overline{FM}(b^2)$ and so $\tilde b_\lambda(h_m) = \lambda\tilde b^1(h_m) + (1-\lambda)b^2(h_m)$ for all $m \in \N$. Hence, 
$\lim_{m\to\infty} \tilde b_\lambda(h_m) = \lim_{m\to\infty} (\lambda\tilde b^1(h_m) + (1-\lambda)b^2(h_m)) = L(b_\lambda)$.

For the second part of the result concerning $S$, completely analogous reasoning (except now $\left\{h_m\right\}$ is a sequence in $I_3$ instead of $I_4$ and we use Lemma~\ref{lem:seq-S} instead of Lemma~\ref{lem:seq-L}) shows $\lim_{m\to\infty} \tilde b_\lambda(h_m) = S(b_\lambda)$.
\end{proof}

\begin{lemma}\label{lemma:L-vertical-pricing}
Let $b, d \in \ell_\infty(I)$ such that $-\infty < L(b), L(b+d) < \infty.$ Assume   DP.\ref{item:dp-condition-sup-L-vertical}  and that   $\overline{FM}(\ell_\infty(I)) \subseteq \ell_\infty(H)$. Then there exists $\hat \epsilon > 0$ and a sequence $\{h_m\}_{m\in \N} \subseteq I_4$ such that for all $\epsilon \in [0,\hat \epsilon]$:
\begin{align*}
 \tilde d_{\epsilon} (h_m) \to L(b + \epsilon d)  \text{ and } \sum_{k=\ell}^{n} |\tilde{a}^k(h_m)| \to 0
\end{align*}
where $\tilde d_{\epsilon} := \overline{FM}(b + \epsilon d)$.  
\end{lemma}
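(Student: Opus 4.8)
The plan is to reduce everything to producing, for a well-chosen $\hat\epsilon>0$, a single sequence $\{h_m\}_{m\in\N}\subseteq I_4$ that is extremal for \emph{both} right-hand sides $b$ and $b+\hat\epsilon d$: that is, $\tilde b(h_m)\to L(b)$, $\overline{FM}(b+\hat\epsilon d)(h_m)\to L(b+\hat\epsilon d)$, and $\sum_{k=\ell}^n|\tilde a^k(h_m)|\to 0$. Granting such a sequence, for any $\epsilon\in[0,\hat\epsilon]$ write $b+\epsilon d=\lambda b+(1-\lambda)(b+\hat\epsilon d)$ with $\lambda=1-\epsilon/\hat\epsilon\in[0,1]$ and apply Lemma~\ref{lem:conv-comb} with $b^1=b$, $b^2=b+\hat\epsilon d$; it yields $\tilde d_\epsilon(h_m)=\overline{FM}(b+\epsilon d)(h_m)\to L(b+\epsilon d)$, which together with $\sum_{k=\ell}^n|\tilde a^k(h_m)|\to 0$ is exactly the assertion of the lemma. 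So all the work is in constructing the joint extremal sequence and the scalar $\hat\epsilon$.

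First I would assemble the elementary facts. Write $\tilde b=\overline{FM}(b)$, $\tilde d=\overline{FM}(d)$; by hypothesis $\tilde d\in\ell_\infty(H)$, and set $N:=\|\tilde d\|_\infty$. If $N=0$ then $\overline{FM}(b+\epsilon d)=\tilde b$ for all $\epsilon$ and the claim follows directly from Lemma~\ref{lem:seq-L} applied to $b$ (with $\hat\epsilon=1$), so assume $N>0$. Since $\sum_{k=\ell}^n|\tilde a^k(h)|\ge0$ on $I_4$, one has $L(d)\le N$ and $L(-d)\le N$, so by sublinearity of $L$ (Lemma~\ref{lem:convex-L}) and finiteness of $L(b)$, $L(b)-\epsilon N\le L(b+\epsilon d)\le L(b)+\epsilon N$; in particular $L(b+\epsilon d)$ is finite for every $\epsilon\in[0,1]$. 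Also $I_4\ne\emptyset$ since $L(b)>-\infty$.

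The substantive step uses Condition DP.\ref{item:dp-condition-sup-L-vertical}. Put $\gamma:=\sup\{\limsup_m\tilde b(h_m):\{h_m\}_{m\in\N}\in\mathcal H_L\}$ (with $\sup\emptyset:=-\infty$); DP.\ref{item:dp-condition-sup-L-vertical} gives $\gamma<L(b)$. I claim that any sequence $\{h_m\}_{m\in\N}\subseteq I_4$ with $\sum_{k=\ell}^n|\tilde a^k(h_m)|\to0$ and $\limsup_m\tilde b(h_m)>\gamma$ in fact has $\limsup_m\tilde b(h_m)=L(b)$: passing to a subsequence realizing the $\limsup$, Lemma~\ref{lem:seq-L} gives $\limsup_m\tilde b(h_m)\le L(b)$, and if this inequality were strict the subsequence would lie in $\mathcal H_L$, forcing $\limsup_m\tilde b(h_m)\le\gamma$, a contradiction. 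Now choose $\hat\epsilon$ with $0<\hat\epsilon<\min\{1,(L(b)-\gamma)/(2N)\}$ and apply Lemma~\ref{lem:seq-L} to $b+\hat\epsilon d$ (whose $L$-value is finite) to obtain $\{h_m\}\subseteq I_4$ with $\overline{FM}(b+\hat\epsilon d)(h_m)\to L(b+\hat\epsilon d)$ and $\tilde a^k(h_m)\to0$ for all $k$. Since $\tilde b(h_m)=\overline{FM}(b+\hat\epsilon d)(h_m)-\hat\epsilon\tilde d(h_m)\ge\overline{FM}(b+\hat\epsilon d)(h_m)-\hat\epsilon N$, we get $\limsup_m\tilde b(h_m)\ge L(b+\hat\epsilon d)-\hat\epsilon N\ge L(b)-2\hat\epsilon N>\gamma$, so by the claim $\limsup_m\tilde b(h_m)=L(b)$. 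Replacing $\{h_m\}$ by a subsequence realizing this $\limsup$ — which preserves $\overline{FM}(b+\hat\epsilon d)(h_m)\to L(b+\hat\epsilon d)$ and $\sum_{k=\ell}^n|\tilde a^k(h_m)|\to0$ — gives the desired joint extremal sequence, and the first paragraph finishes the proof.

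The main obstacle is this third step: recognizing that DP.\ref{item:dp-condition-sup-L-vertical} is precisely the hypothesis supplying a uniform gap $\gamma<L(b)$ below which no candidate extremal sequence can stall, and then calibrating $\hat\epsilon$ against both that gap and the norm $N=\|\overline{FM}(d)\|_\infty$ so that a sequence extremal for $b+\hat\epsilon d$ is pushed above $\gamma$ on its $b$-values and is therefore automatically extremal for $b$ as well. The rest is routine manipulation of sublinearity and of subsequences.
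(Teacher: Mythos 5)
Your proof is correct and follows essentially the same route as the paper's: pick $\hat\epsilon$ small relative to both $\|\overline{FM}(d)\|_\infty$ and the gap guaranteed by DP.2, extract via Lemma~\ref{lem:seq-L} an extremal sequence for $b+\hat\epsilon d$, argue its $\tilde b$-values cannot stall strictly below $L(b)$ without contradicting DP.2, and then invoke Lemma~\ref{lem:conv-comb} to cover all $\epsilon\in[0,\hat\epsilon]$. The only differences are cosmetic (your threshold $(L(b)-\gamma)/(2N)$ versus the paper's $\alpha/3$ bookkeeping, and your explicit treatment of the $N=0$ case).
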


\begin{proof}
Define 
 \begin{align*}
\alpha :=  L(b) - \sup \{ \limsup \{\tilde{b} (h_{m})\}_{m \in \N} : \{h_{m}\}_{m \in \N} \in \mathcal H_L \} 
\end{align*}
By hypothesis, $-\infty < L(b) < \infty$   so $I_{4}$ is not empty and then by  assumption DP.\ref{item:dp-condition-sup-L-vertical} $\alpha $  is a positive real number. 
 
\noindent 1. Since  $\tilde d = \overline{FM}(d) \in \ell_\infty(H)$  there exists  $\hat\epsilon > 0$ such that $$\hat\epsilon \sup_{h \in I_4} | \tilde d(h)| < \frac\alpha3.$$
 
\noindent 2. Claim: $L(b) - \frac\alpha3 \leq L(b + \hat\epsilon d) \leq L(b) + \frac\alpha 3$. Proof:
$$\begin{array}{rcl} L(b + \hat\epsilon d) & = &\lim_{\delta\to\infty}\sup \{\tilde{b}(h) + \hat\epsilon\tilde d(h)- \delta \sum_{k=\ell}^{n} |\tilde{a}^k(h)| \, : \, h \in I_4 \} \\ &\geq &\lim_{\delta\to\infty}\sup \{\tilde{b}(h) - \frac\alpha3 - \delta \sum_{k=\ell}^{n} |\tilde{a}^k(h)| \, : \, h \in I_4 \} \\ & = & \lim_{\delta\to\infty}\sup \{\tilde{b}(h)  - \delta \sum_{k=\ell}^{n} |\tilde{a}^k(h)| \, : \, h \in I_4 \} - \frac\alpha3 \\ & = & L(b) -\frac\alpha3 \end{array}$$
Similarly, one can show $L(b + \hat\epsilon d) \leq L(b) + \frac\alpha 3$.

\noindent 3. Consider $\overline{FM}(b + \hat \epsilon d) = \overline{FM}(b) + \hat \epsilon \overline{FM}(d) = \tilde b + \hat\epsilon\tilde d.$  By Claim 2, $L(b + \hat\epsilon d)$ is finite. By Lemma~\ref{lem:seq-L}, there exists a sequence $\{h'_m\}$ such that $\tilde b(h'_m) + \hat\epsilon\tilde d(h'_m) \to L(b + \hat\epsilon d)$ and $\sum_{k=\ell}^{n} |\tilde{a}^k(h'_m)| \to 0$. 

\noindent 4. Claim:  $\limsup \{ \tilde b(h'_m) \}_{m \in \N} = L(b).$ Proof: first show  $\limsup \{ \tilde b(h'_m) \}_{m \in \N} \le L(b).$  If $$\limsup \{ \tilde b(h'_m) \}_{m \in \N} > L(b)$$ then there is subsequence of indices $\{ h^{\prime \prime}  \}_{m \in \N}$ from $\{ h^{\prime }  \}_{m \in \N}$ such that 
$\lim_{m \rightarrow \infty }\tilde b(h''_m) > L(b).$  But  $\sum_{k=\ell}^{n} |\tilde{a}^k(h'_m)| \to 0$ so  $\sum_{k=\ell}^{n} |\tilde{a}^k(h''_m)| \to 0.$  This directly contradicts Lemma~\ref{lem:seq-L} so we conclude   $\limsup \{ \tilde b(h'_m) \}_{m \in \N} \le L(b).$  

Since $\limsup \{ \tilde b(h'_m) \}_{m \in \N} \le L(b)$  it suffices to show  $\limsup \{ \tilde b(h'_m) \}_{m \in \N} = L(b)$    by showing $\limsup \{ \tilde b(h'_m) \}_{m \in \N}$ cannot be strictly less than $L(b).$   From Step 3. above,  we know  $\{ \tilde b(h'_m) + \hat\epsilon\tilde d(h'_m) \}_{m \in \N}$ is a sequence that converges to  $L(b + \hat\epsilon d).$ This implies
\begin{eqnarray}
L(b + \hat\epsilon d)   &=&  \lim_{m \rightarrow \infty} (\tilde b(h'_m) + \hat\epsilon\tilde d(h'_m) )  \label{eq:sufficient-conditions-vertical-dual-pricing-1} \\
 &=& \limsup \{ \tilde b(h'_m) + \hat\epsilon\tilde d(h'_m) \}_{m \in\N} \label{eq:sufficient-conditions-vertical-dual-pricing-2}\\
&\le& \limsup \{  \tilde b(h'_m)  \}_{m \in M} + \limsup \{ \hat\epsilon\tilde d(h'_m) \}_{m \in\N} \label{eq:sufficient-conditions-vertical-dual-pricing-3}\\
&<& \limsup \{  \tilde b(h'_m)  \}_{m \in M} + \frac{\alpha}{3}.  \label{eq:sufficient-conditions-vertical-dual-pricing-4}
\end{eqnarray}
If  $\limsup \{ \tilde b(h'_m) \}_{m \in \N} <  L(b),$ then by definition of $\alpha,$ 
\begin{align*}
\limsup \{ \tilde b(h'_m) \}_{m \in \N} \le  L(b) - \alpha.
\end{align*}
Then from~\eqref{eq:sufficient-conditions-vertical-dual-pricing-1}-\eqref{eq:sufficient-conditions-vertical-dual-pricing-4}
\begin{eqnarray*}
L(b + \hat\epsilon d) < \limsup \{  \tilde b(h'_m)  \}_{m \in M} + \frac{\alpha}{3} 
\le  L(b) - \alpha +  \frac{\alpha}{3} = L(b) - \frac{2}{3}\alpha
\end{eqnarray*}
which cannot happen since from Step 2,  $L(b + \hat\epsilon d) \ge L(b) - \frac{\alpha}{3} > L(b) - \frac{2}{3} \alpha.$ Therefore $\limsup \{ \tilde b(h'_m) \}_{m \in \N} =  L(b).$  Then by Lemma~\ref{lem:seq-L} there is subsequence of indices $\{ h^{\prime \prime}  \}_{m \in \N}$ from $\{ h^{\prime }  \}_{m \in \N}$ such that 
\begin{align*}
\begin{array}{l}
\tilde b(h''_m) \to L(b) \\
\sum_{k=\ell}^{n} |\tilde{a}^k(h''_m)| \to 0
\end{array}
\end{align*}
and from Claim 3 since  $\{ h^{\prime \prime}  \}_{m \in \N}$ is a subsequence from $\{ h^{\prime }  \}_{m \in \N}$
\begin{align*}
\begin{array}{l}
\tilde b(h''_m) + \hat\epsilon \tilde d(h''_m) \to L(b + \hat \epsilon d) \\
\sum_{k=\ell}^{n} |\tilde{a}^k(h''_m)| \to 0.
\end{array}
\end{align*}

\noindent 5. Claim:
\begin{align*}
\begin{array}{l}
\tilde b(h''_m) + \epsilon \tilde d(h''_m) \to L(b + \epsilon d) \\
\sum_{k=\ell}^{n} |\tilde{a}^k(h''_m)| \to 0.
\end{array}
\end{align*}
holds for all $\epsilon \in [0,\hat \epsilon]$. Proof: this is because for every $\epsilon \in [0, \hat \epsilon]$, $b + \epsilon d$ is a convex combination of the sequences $b$ and $b + \hat \epsilon d$. The claim follows by applying Lemma~\ref{lem:conv-comb} with $b^1 = b$ and $b^2 = b + \hat\epsilon d$. 
\end{proof}

Lemma~\ref{lemma:s-vertical-pricing} is an analogous result for sequences in $I_{3}$ converging to $S(b).$

\begin{lemma}\label{lemma:s-vertical-pricing}
Let $b, d \in \ell_\infty(\left\{0\right\} \cup I)$ such that $-\infty < S(b), S(b+d) < \infty.$ Assume DP.\ref{item:dp-condition-sup-S-vertical} and $\overline{FM}(\ell_\infty(I)) \subseteq \ell_\infty(H)$. Then there exists $\hat \epsilon > 0$ and a sequence $\{h_m\}_{m\in \N} \subseteq I_3$ such that for all $\epsilon \in [0,\hat \epsilon]$:
\begin{align*}
  \tilde d_{\epsilon} (h_m) \to S(b + \epsilon d)
\end{align*}
where $\tilde d_{\epsilon} := \overline{FM}(b + \epsilon d)$. 
\end{lemma}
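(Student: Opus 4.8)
The plan is to run the proof of Lemma~\ref{lemma:L-vertical-pricing} verbatim, with $I_4$ replaced by $I_3$, the functional $L$ replaced by $S$, Condition DP.\ref{item:dp-condition-sup-L-vertical} replaced by DP.\ref{item:dp-condition-sup-S-vertical}, and Lemma~\ref{lem:seq-L} replaced by Lemma~\ref{lem:seq-S}. The argument is in fact strictly simpler than its $I_4$ counterpart, because every $h \in I_3$ satisfies $\tilde a^k(h)=0$ for all $k$, so all the bookkeeping with $\sum_{k=\ell}^n|\tilde a^k(h_m)|$ disappears. First I would note that the hypothesis $-\infty < S(b) < \infty$ forces $I_3 \neq \emptyset$ (otherwise $S(b) = -\infty$), so DP.\ref{item:dp-condition-sup-S-vertical} is not vacuous and
$$\alpha := S(b) - \sup\{\,\limsup\{\tilde b(h_m)\}_{m\in\N} : \{h_m\}_{m\in\N} \in \mathcal H_S\,\}$$
is a strictly positive real number, playing exactly the role $\alpha$ did in Lemma~\ref{lemma:L-vertical-pricing}.

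Next, since $\overline{FM}(\ell_\infty(I)) \subseteq \ell_\infty(H)$ the vector $\tilde d := \overline{FM}(d)$ lies in $\ell_\infty(H)$, so I can pick $\hat\epsilon > 0$ with $\hat\epsilon \sup_{h\in I_3}|\tilde d(h)| < \alpha/3$. From $S(y) = \sup_{h\in I_3}\tilde y(h)$ and $\overline{FM}(b+\hat\epsilon d) = \tilde b + \hat\epsilon\tilde d$, the pointwise bounds $\tilde b(h) - \tfrac\alpha3 < \tilde b(h) + \hat\epsilon\tilde d(h) < \tilde b(h) + \tfrac\alpha3$ over $h \in I_3$ give $S(b) - \tfrac\alpha3 \le S(b+\hat\epsilon d) \le S(b) + \tfrac\alpha3$, so in particular $S(b+\hat\epsilon d)$ is finite. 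Applying Lemma~\ref{lem:seq-S} to $y = b + \hat\epsilon d$ yields a sequence $\{h'_m\}_{m\in\N}\subseteq I_3$ with $\tilde b(h'_m) + \hat\epsilon\tilde d(h'_m) \to S(b+\hat\epsilon d)$. The key claim, just as in Step 4 of Lemma~\ref{lemma:L-vertical-pricing}, is $\limsup\{\tilde b(h'_m)\}_{m\in\N} = S(b)$: the inequality ``$\le$'' is immediate from the definition of $S$, and if ``$<$'' held then $\{h'_m\}\in\mathcal H_S$, so $\limsup\{\tilde b(h'_m)\} \le S(b) - \alpha$; combining with $S(b+\hat\epsilon d) = \lim(\tilde b(h'_m)+\hat\epsilon\tilde d(h'_m)) \le \limsup\{\tilde b(h'_m)\} + \hat\epsilon\sup_{h\in I_3}|\tilde d(h)| < (S(b)-\alpha) + \tfrac\alpha3$ contradicts $S(b+\hat\epsilon d) \ge S(b) - \tfrac\alpha3$. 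Passing to a subsequence $\{h''_m\}$ of $\{h'_m\}$ with $\tilde b(h''_m) \to S(b)$ (possible since the $\limsup$ equals $S(b)$), we retain $\tilde b(h''_m) + \hat\epsilon\tilde d(h''_m) \to S(b+\hat\epsilon d)$. Finally, for each $\epsilon\in[0,\hat\epsilon]$ write $b+\epsilon d = (1-\tfrac{\epsilon}{\hat\epsilon})b + \tfrac{\epsilon}{\hat\epsilon}(b+\hat\epsilon d)$ and invoke the second part of Lemma~\ref{lem:conv-comb} (the $S$/$I_3$ version) with $b^1 = b$, $b^2 = b+\hat\epsilon d$, and the sequence $\{h''_m\}$, to get $\overline{FM}(b+\epsilon d)(h''_m) = \tilde b(h''_m) + \epsilon\tilde d(h''_m) \to S(b+\epsilon d)$ for all $\epsilon\in[0,\hat\epsilon]$, which is the conclusion.

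There is no real obstacle here: the statement is precisely the $S$-counterpart of Lemma~\ref{lemma:L-vertical-pricing}, and dropping the $\tilde a^k$ terms removes the only genuinely technical part of that proof (one no longer needs the nondegeneracy $\sum_k|\tilde a^k(h)|>0$ or distinctness of indices to run the limit arguments). The only small points to watch are (a) recording that $S(b)$ finite $\Rightarrow I_3\neq\emptyset$ so that $\alpha$ is a bona fide positive real, and (b) the degenerate case $\mathcal H_S = \emptyset$, where the key claim is vacuous — every sequence in $I_3$ produced by Lemma~\ref{lem:seq-S} then automatically has $\limsup\{\tilde b(h'_m)\} = S(b)$ — and the remainder of the argument goes through unchanged.
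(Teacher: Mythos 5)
Your proposal is correct and follows exactly the route the paper takes: the paper's own proof of this lemma is simply the instruction to repeat the proof of Lemma~\ref{lemma:L-vertical-pricing} with $L$ replaced by $S$, $I_4$ by $I_3$, DP.2 by DP.1, and the observation that $\sum_{k=\ell}^n|\tilde a^k(h)|=0$ on $I_3$, which is precisely the substitution you carry out (and you usefully make explicit the two edge points the paper leaves implicit, namely $I_3\neq\emptyset$ and the degenerate case $\mathcal H_S=\emptyset$).
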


\begin{proof}
The proof is analogous to Lemma~\ref{lemma:L-vertical-pricing}. Replace  $L$ with $S$, $I_4$ with $I_3$,  and redefine $\alpha$ as 
 \begin{align*}
\alpha :=  S(b) - \sup \{ \limsup \{\tilde{b} (h_{m})\}_{m \in \N} : \{h_{m}\}_{m \in \N} \in \mathcal H_S \} 
\end{align*}
By hypothesis, $-\infty < S(b) < \infty$   so $I_{3}$ is not empty and then by  assumption DP.\ref{item:dp-condition-sup-S-vertical}, $\alpha $  is a positive real number.
 The result follows from  DP.\ref{item:dp-condition-sup-S-vertical}   and noting $\sum_{k=\ell}^{n} |\tilde{a}^k(h_m)| = 0$ for all sequences $\left\{h_m\right\}$ in $I_3.$  
 \end{proof}

\bibliographystyle{plain}
\bibliography{../../references/references}

\end{document}